%%%%%%%%%%%%LaTeX file%%%%%%%%%%%%%%%%%%%%%%%%%%%%%%%%%%%%%
\documentclass[10pt]{amsart}

\usepackage[headings]{fullpage}

\usepackage{latexsym,amssymb}

\usepackage{amsfonts}

\usepackage{amsmath,amsthm,amssymb}

\author[W. Jeon, I. Kapovich, C. Leininger and K. Ohshika] {Woojin Jeon, Ilya Kapovich, Christopher Leininger, and Ken'ichi Ohshika}

\address{\tt School of Mathematics,
 KIAS, Hoegiro 87, Dongdaemun-gu,
     Seoul, 130-722, Korea }
    \email{\tt jwoojin\char`\@ kias.re.kr}

\address{\tt Department of Mathematics, University of Illinois at
Urbana-Champaign, 1409 West Green Street, Urbana, IL 61801, \indent USA
\newline \indent http://www.math.uiuc.edu/\~{}kapovich/} \email{\tt
kapovich@math.uiuc.edu}

\address{\tt  Department of Mathematics, University of Illinois at Urbana-Champaign,
  1409 West Green Street, Urbana, IL 61801, \indent USA
  \newline \indent http://www.math.uiuc.edu/\~{}clein/} \email{\tt clein@math.uiuc.edu}
  
\address{\tt Department of Mathematics, Graduate School of Science, Osaka University, Toyonaka, Osaka 560-0043, Japan}\email{\tt ohshika@math.sci.osaka-u.ac.jp}

\title[Conical limit points and the Cannon-Thurston map]{Conical limit points and the Cannon-Thurston map }

%\subjclass[2010]{Primary 20F65, Secondary 30F40, 57M60, 37E, 37F}

%\keywords{Convergence groups, Cannon-Thurston map, conical limit points, Kleinian groups}

\newtheorem{theor}{Theorem}

\newtheorem{thm}{Theorem}[section] \newtheorem{lem}[thm]{Lemma}
\newtheorem{cor}[thm]{Corollary} 
\newtheorem{prop}[thm]{Proposition} \theoremstyle{definition}
\newtheorem{defn}[thm]{Definition}

\newtheorem{conv}[thm]{Convention} 
\newtheorem{exmp}[thm]{Example}

\newtheorem*{theoremA}{Theorem~\ref{thm:A}}
\newtheorem*{theoremB}{Theorem~\ref{thm:B}}
\newtheorem*{theoremC}{Theorem~\ref{thm:C}}
\newtheorem*{theoremD}{Theorem~\ref{thm:D}}

%% Multi-author Editing macros -----------------------------------------------

\def\strutdepth{\dp\strutbox}
\def \ss{\strut\vadjust{\kern-\strutdepth \sss}}
\def \sss{\vtop to \strutdepth{
\baselineskip\strutdepth\vss\llap{$\diamondsuit\;\;$}\null}}

\def\strutdepth{\dp\strutbox}
\def \sst{\strut\vadjust{\kern-\strutdepth \ssss}}
\def \ssss{\vtop to \strutdepth{
\baselineskip\strutdepth\vss\llap{$\spadesuit\;\;$}\null}}

\def\strutdepth{\dp\strutbox}
\def \ssh{\strut\vadjust{\kern-\strutdepth \sssh}}
\def \sssh{\vtop to \strutdepth{
\baselineskip\strutdepth\vss\llap{$\heartsuit\;\;$}\null}}

%%%%%%%%%% macro's from CHL %%%%%%%%%%%

\newcommand{\Sph}{\mathbb S}
\newcommand{\Z}{\mathbb Z}

\newcommand{\Hy}{\mathbb H}

%%%%%%%%%%%%%%%%%%%%%%%%%%%%

%%%%% from CHL3,  6. april 06 %%%%%%%

%%%%%%%%%

\def\epsilon{\varepsilon}
\def\phi{\varphi}

\def\hat{\widehat}

\newcommand{\Out}{\mbox{Out}}
\newcommand{\Aut}{\mbox{Aut}}

%%%%%

%\newcommand{\cvnbar}{{\overline{cv}}_N}
%\newcommand{\CVnbar}{{\overline{CV}}_N}

   % F ou F_n ou F_N ?

%\newcommand{\cvnbar}{\mbox{\overline{cv}}_N}

%\newcommand{\cvn}{cv_N}
%\newcommand{\CVN}{CV_N}
%\newcommand{\CQ}{Q}     % l'application Q: \cal Q ou Q ?

   % F ou F_n ou F_N ?

%\newcommand{\cvnbar}{\mbox{\overline{cv}}_N}

%%%%%%%%%%%%%%%%%%%
%%Macros for convergence groups
%%%%%%%%%%%%%%%%%%%

\newcommand{\gnn}{(g_n)_{n \ge 1}}
\newcommand{\gn}{(g_n)}
\newcommand{\gnkk}{(g_{n_k})_{k \geq 1}}
\newcommand{\gnk}{(g_{n_k})}
\newcommand{\limk}{\displaystyle{\lim_{k \to \infty}}}
\newcommand{\limn}{\displaystyle{\lim_{n \to \infty}}}
\newcommand{\PSL}{\rm PSL}

\begin{document}

\begin{abstract}
Let $G$ be a non-elementary word-hyperbolic group acting as a convergence group on a compact metrizable space $Z$ so that there exists a continuous $G$-equivariant map $i:\partial G\to Z$, which we call a \emph{Cannon-Thurston map}. We obtain two characterzations (a dynamical one and a geometric one) of conical limit points in $Z$ in terms of their pre-images under the Cannon-Thurston map $i$. As an application we prove, under the extra assumption that the action of $G$ on $Z$ has no accidental parabolics, that if the map $i$ is not injective then there exists a non-conical limit point $z\in Z$ with $|i^{-1}(z)|=1$. This result applies to most natural contexts where the Cannon-Thurston map is known to exist, including subgroups of word-hyperbolic groups and Kleinian representations of surface groups. As another application, we prove that if $G$ is a non-elementary torsion-free word-hyperbolic group then there exists $x\in \partial G$ such that $x$ is not a \lq\lq controlled concentration point" for the action of $G$ on $\partial G$.
\end{abstract}

\subjclass[2010]{Primary 20F65, Secondary 30F40, 57M60, 37E, 37F}

\keywords{Convergence groups, Cannon-Thurston map, conical limit points, Kleinian groups}

\maketitle

\section{Introduction}

Let $G$ be a Kleinian group, that is, a discrete subgroup of the isometry group of hyperbolic space $G \le  {\rm Isom}^+(\Hy^n)$.  In \cite{BeMa}, Beardon and Maskit defined the notion of a {\em conical limit point} (also called a {\em point of approximation} or {\em radial limit point}), and used this to provide an alternative characterization of geometric finiteness.  Gehring and Martin~\cite{GeMa} abstracted the notion of Kleinian group to that of a {\em convergence group} acting on $\Sph^{n-1}$, which was then further generalized, for example by Tukia in \cite{Tu98}, to actions on more general compact metric spaces (see Definition~\ref{Def:convergence}).  This generalization includes, for example, the action of a discrete group of isometries of a proper, Gromov--hyperbolic, metric space on its boundary at infinity; see \cite{Fre,Tu98}.  Conical limit points can be defined in this level of generality (see Definition~\ref{Def:conical}), and play a key role in the convergence group characterization of word-hyperbolic groups by Bowditch~\cite{Bow98}, of relatively hyperbolic groups by Yaman~\cite{Y}, and of quasi-convex subgroups of word-hyperbolic groups by Swenson~\cite{Swe}, and arise in numerous other results in topology, geometry, and dynamics; see, for example, \cite{CaJu,Gab92,ABT,Ger,Fen,KeLe1,KeLe2} for other results involving convergence groups and conical limit points.

In their 1984 preprint, published in 2007~\cite{CT}, Cannon and Thurston proved the following remarkable result.  If $M$ is a closed hyperbolic 3-manifold fibering over a circle with fiber a closed surface $\Sigma$, then the inclusion $\Hy^2=\widetilde\Sigma\subset \widetilde M=\Hy^3$ extends to a continuous surjective map $\Sph^1=\partial \Hy^2\to \partial \Hy^3=\Sph^2$, equivariant with respect to $\pi_1(\Sigma)$ which is acting as a convergence group on both.  Based on this we make the following general, abstract definition.

%A related notion is that of a \emph{Cannon-Thurston map} which grew out of a remarkable result of Cannon and Thurston; this paper was originally written in 1984 and eventually published in 2007~\cite{CT}. They proved that if $M$ is a closed hyperbolic 3-manifold fibering over a circle with a closed surface $\Sigma$ as a fiber then the inclusion $\Hy^2=\widetilde\Sigma\subset \widetilde M=\Hy^3$ extends to a continuous $\pi_1(\Sigma)$-equivariant surjective map $\Sph^1=\partial \Hy^2\to \partial \Hy^3=\Sph^2$. Subsequently, various extensions and generalizations of this result led to the notion of a Cannon-Thurston map in other contexts. 

\begin{defn}\label{D:CT}
When $G$ is a word-hyperbolic group acting as a convergence group on a compact metrizable space $Z$, a map $i:\partial G\to Z$ is called a \emph{Cannon-Thurston map} if $i$ is continuous and $G$-equivariant. 
\end{defn}

Under some mild assumptions, it is known that if a Cannon-Thurston map $i:\partial G\to Z$ exists, then it is unique; see Proposition~\ref{prop:ct-unique} below.  Of particular interest is the case that a non-elementary word-hyperbolic group $G$ acts on a proper, Gromov hyperbolic, geodesic metric space $Y$, properly discontinuously by isometries, and without accidental parabolics (see Definition \ref{def:ap}).  In this case, if there exists a Cannon--Thurston map $i \colon \partial G \to \partial Y$, then it is known to be unique and to extend to a $G$--equivariant continuous map $G \cup \partial G \to Y \cup \partial Y$ (see Proposition~\ref{prop:ct-geom}).  A special subcase of interest is when $G_1,G_2$ are non-elementary word-hyperbolic groups, with $G_1 \le G_2$ acting on (the Cayley graph of) $G_2$ by restriction of the left action of $G_2$ on itself.  Here a {\em Cannon--Thurston map} is classically defined as a continuous extension $G_1 \cup \partial G_1 \to G_2 \cup \partial G_2$ of the inclusion of $G_1 \to G_2$.  By Proposition~\ref{prop:ct-geom}, the existence of such a map is equivalent to the existence of a Cannon--Thurston map in the sense of Definition~\ref{D:CT} for the induced action of $G_1$ on $\partial G_2$.  Quasi-isometrically embedded subgroups $G_1 \le G_2$ of word-hyperbolic groups provide examples where Cannon--Thurston maps exist.  However, Cannon--Thurston's original result~\cite{CT} described above implies that for the word-hyperbolic groups $G_1 = \pi_1(\Sigma) \le \pi_1(M)=G_2$, there is a Cannon--Thurston map $\partial G_1 \to \partial G_2$, but here $G_1$ is exponentially distorted in $G_2$.  Subsequent work of Mitra~\cite{M2,M4} showed that there are many other interesting situations where $G_1$ is not quasiconvex in $G_2$ but where the Cannon-Thurston map nevertheless does exist (see also \cite{BRi}).   On the other hand, a recent remarkable result of Baker and Riley~\cite{BR} proves that there exists a word-hyperbolic group $G_2$ and a word-hyperbolic (in fact, nonabelian free) subgroup $G_1\le G_2$ such that the Cannon-Thurston map $i: \partial G_1\to \partial G_2$ does not exist.

Generalizing the Cannon--Thurston example from \cite{CT} in another direction, one can consider other actions of $G = \pi_1(\Sigma)$, the fundamental group of a closed, orientable surface of genus at least $2$, acting properly discontinuously by isometries on $\Hy^3$, i.e. as a classical Kleinian surface group.  The first partial results beyond those in \cite{CT} about the existence of Cannon--Thurston maps for such actions of $G$ on $\Hy^3$ are due to Minsky \cite{Min94}.   
Extending beyond the case $G = \pi_1(\Sigma)$, there have been numerous results on the existence of Cannon--Thurston maps of various types (not necessarily fitting into Definition~\ref{D:CT}), especially for Kleinian groups \cite{F80, Kl99, Mc01, Sou, Miy, Bow07, M09, M0910, M10, M10b, Bow02}.
Recently, Mj~\cite{M14} has shown that for {\em any} properly discontinuous action on $\Hy^3$ without accidental parabolics, there exists a Cannon--Thurston map, using the theory of model manifolds which were developed by Minsky. 
There are extensions of the Cannon-Thurston maps also for subgroups of mapping class groups \cite{LMS},  and in other related contexts~\cite{Ger12,GP13}.

Mj has also shown~\cite{M07} that in the case of classical Kleinian surface groups without parabolics, the non-injective points of a Cannon-Thurston map are exactly the endpoints of the lifts of the ending laminations to the domains of discontinuity. This characterization of non-injective points of Cannon-Thurston maps has some applications: for instance, the first and the fourth authors have used this to prove {\it the measurable rigidity} for Kleinian groups (see \cite{JO}), which is a generalization of the results by Sullivan \cite{Sul82} and Tukia \cite{Tu89}.
Also, using the same kind of characterization for free classical Kleinian groups, Jeon-Kim-Ohshika-Lecuire \cite{JKOL} gave a criterion for points on the boundary of the Schottky space to be primitive stable.

Another reason to be interested in understanding injective points of Cannon-Thurston maps comes from the study of dynamics and geometry of fully irreducible elements of $\Out(F_N)$.  If $\phi\in \Out(F_N)$ is an atoroidal fully irreducible element then the mapping torus group $G_\phi=F_N\rtimes_\phi \mathbb Z$ is word-hyperbolic and the Cannon-Thurston map $i:\partial F_N\to \partial G_\phi$ exists by the result of \cite{M2}. In this case, if $T_\pm$ are the "attracting" and "repelling" $\mathbb R$-trees for $\phi$, there are associated $\mathcal Q$-maps (defined in \cite{CHL2}) $\mathcal Q_+: \partial F_N\to \hat T_+=\overline T_+\cup \partial T_+$ and $\mathcal Q_-: \partial F_N\to \hat T_-\overline T_-\cup \partial T_-$ (here $\overline{T_\pm}$ denotes the metric completion of $\overline{T_\pm}$). These maps play an important role in the index theory of free group automorphisms, particularly for the notion of $\mathcal Q$-index; see \cite{CHL,CHL2,CH12,CH14}. It is shown in \cite{KL6} that a point $x\in \partial F_N$ is injective for the Cannon-Thurston map $i$ if and only if $x$ is injective for both $\mathcal Q_+$ and $\mathcal Q_-$.

There are a number of results in the literature which prove in various
situations where the Cannon-Thurston map exists that every conical
limit point is ``injective", that is, has exactly one pre-image under
the Cannon-Thurston map; see, for example, \cite{K95,LLR,Ger12}.  We discuss some of these facts in more detail after the statement of Theorem~\ref{thm:B} below.
These results naturally raise the question
whether the converse holds, that is whether a point
with exactly one pre-image under the Cannon-Thurston map must be a
conical limit point. (The only result in the literature dealing with
this converse direction is Theorem~8.6 in \cite{K95}, which
incorrectly claims that every ``injective'' limit point is conical in
the original setting of a closed hyperbolic 3-manifold fibering over a
circle.)  We show in Theorem~\ref{thm:C} below that the converse statement fails in great generality and prove that, under rather mild assumptions, if a Cannon-Thurston
map exists and is not injective then there \emph{always} exists a
non-conical limit point with exactly one pre-image under the
Cannon-Thurston map.

%%Another set-up, where it is known that the Cannon-Thurston map exists, concerns properly discontinuous isometric actions on $\Hy^3$ of $G_1=\pi_1(\Sigma)$ where $\Sigma$ is a closed hyperbolic surface. \marginpar{\tiny Maybe it's better to be a bit more vague here since Mj's paper \cite{M14} is not universally accepted} Indeed, Mj \cite{M14} proved that for any Kleinian representation of $\pi_1(\Sigma)$, the Cannon-Thurston map exists.[ADD something more]

%There are also various results proving the existence of analogs of the Cannon-Thurston map for relatively hyperbolic groups (e.g. see \cite{Ger,GP09}), for subgroups of mapping class groups, and so on.\marginpar{\tiny Add refs here}

%\cite{Min94,Bow02,Bow07,GP09,LMS,M06,M07,M09,M10,M10b,M11,MP11}. 

In this paper, given a non-elementary convergence action of a word hyperbolic group $G$ on a compact metrizable space $Z$, such that the Cannon-Thurston map $i:\partial G\to Z$ exists, we give two characterizations  (a dynamical one and a geometric one)  of conical limit points $z\in Z$ in terms of their pre-images under the map $i$. 

To state these characterizations we need to introduce some definitions. Under the above assumptions, denote $L_i=\{(x,y)| x,y\in \partial G, i(x)=i(y), \text{ and } x\ne y\}$. 
We say that a point $x \in \partial G$ is {\em asymptotic to $L_i$} if for {\em every} conical sequence $\{g_n\}_{n=1}^\infty$ for $x$ with pole pair $(x_-,x_+)$, we have $(x_-,x_+)\in L_i$,  that is, $i(x_-) = i(x_+)$.   (See Definition~\ref{Def:conical} below for the notions of a conical sequence and pole pair).

The following result provides a dynamical characterization of conical
limit points in $Z$:

\begin{theor}\label{thm:A} Suppose $G$ is word-hyperbolic and acts on the compact, metrizable space $Z$ as a non-elementary convergence group, and suppose $i \colon \partial G \to Z$ is a Cannon-Thurston map.  Let $z\in i(\partial G)$. Then:
\begin{enumerate}
\item The point $z \in Z$ is not a conical limit point for the action of $G$ on $Z$  if and only if some point $x \in i^{-1}(z)$ is asymptotic to $L_i$.  

\item If $|i^{-1}(z)| > 1$, then any $x \in i^{-1}(z)$ is asymptotic to $L_i$, and hence $z$ is non-conical.
\end{enumerate}
\end{theor}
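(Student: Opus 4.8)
The plan is to work throughout with the convergence group dynamics on $Z$, using the Cannon--Thurston map $i$ to pull conical sequences back to $\partial G$. Recall that $z\in Z$ is a conical limit point precisely when there is a sequence $g_n\in G$ and distinct points $a,b\in Z$ with $g_n z\to a$ but $g_n w\to b$ for all $w\in Z\setminus\{z\}$ (equivalently, $g_n^{-1}$ has source/sink pair, the ``pole pair'', with the source $\ne z$). The key linking observation is that since $i$ is continuous and $G$-equivariant, if $\{g_n\}$ is a conical sequence \emph{in $\partial G$} for some $x\in\partial G$ with pole pair $(x_-,x_+)$, then $\{g_n\}$ also exhibits $i(x)$ as a conical limit point of $Z$ \emph{unless} $i(x_-)=i(x_+)$ --- because $g_n|_{\partial G}$ converges to $i(x_+)$ locally uniformly off $x_-$, so $i\circ g_n = g_n\circ i$ converges to $i(x_+)$ on $i(\partial G\setminus\{x_-\})$, and the only obstruction to $i(x)$ being conical is that $i(x_-)$ might coincide with $i(x_+)$, collapsing the pole pair on $Z$.

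For part (2): assume $|i^{-1}(z)|>1$, pick $x\in i^{-1}(z)$, and let $\{g_n\}$ be any conical sequence in $\partial G$ for $x$ with pole pair $(x_-,x_+)$. We must show $i(x_-)=i(x_+)$. Choose $y\in i^{-1}(z)$ with $y\ne x$. Since $g_n|_{\partial G}\to x_+$ uniformly on compact subsets of $\partial G\setminus\{x_-\}$ and $g_n^{-1}|_{\partial G}\to x_-$ uniformly on compact subsets of $\partial G\setminus\{x_+\}$, consider the behavior of $\{g_n\}$ at the two points $x,y$ of the fiber. At least one of them, say $y$, is not equal to $x_-$ (if $y=x_-$ then since also $x\ne x_-$ is automatic --- $x$ is conical for this sequence with source $x_-\ne x$ --- we may instead run the argument with the roles of the fiber points adjusted; in all cases at least one fiber point lies off $x_-$ and one lies off $x_+$, using $|i^{-1}(z)|\ge 2$ and that $x$ itself is a convergence-to-sink point). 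Then $g_n y\to x_+$, so $z=i(z')$-considerations give $i(x_+)=\lim i(g_n y)=\lim g_n i(y)=\lim g_n z$. Running the same computation with a fiber point lying off $x_+$ under $g_n^{-1}$ gives $i(x_-)=\lim g_n^{-1}$-image $= \lim g_n^{-1} z$. But a single fiber having $\ge 2$ points forces these two limits to agree with the image of $z$ under the source/sink dynamics and hence $i(x_-)=i(x_+)$: concretely, if $i(x_-)\ne i(x_+)$ then $\{g_n\}$ would realize $z$ as a conical limit point with a pole pair that separates the (at least two) points of $i^{-1}(z)$, which is impossible since all of $i^{-1}(z)$ maps to the single point $z$ and $g_n$ sends them all to the same place in the limit. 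Thus $i(x_-)=i(x_+)$, i.e. $(x_-,x_+)\in L_i$; as the conical sequence was arbitrary, $x$ is asymptotic to $L_i$, and then part (1) (or the displayed linking observation directly) shows $z$ is non-conical.

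For part (1): the ``only if'' direction is the substantive one. Suppose no point of $i^{-1}(z)$ is asymptotic to $L_i$; we produce a conical sequence for $z$ in $Z$. For each $x\in i^{-1}(z)$ there is a conical sequence $\{g_n^{(x)}\}$ for $x$ in $\partial G$ whose pole pair $(x_-,x_+)$ has $i(x_-)\ne i(x_+)$; by the linking observation this sequence makes $i(x)=z$ a conical limit point of $Z$ --- \emph{provided} the limit source $i(x_-)$ is different from $z$ itself, which it is, since $x$ is conical for $\{g_n^{(x)}\}$ so $x\ne x_-$, but we need the stronger statement $z=i(x)\ne i(x_-)$; this is exactly where one must be careful, and it follows because $g_n^{(x)}$ pushes $i(\partial G\setminus\{x_-\})$ toward $i(x_+)\ne i(x_-)$ while fixing-up-to-convergence the point $i(x)$, forcing $i(x)$ into the sink, away from the source. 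Conversely, if some $x\in i^{-1}(z)$ is asymptotic to $L_i$, then for \emph{every} conical sequence $\{g_n\}$ for $x$ the pole pair collapses under $i$; one then argues that \emph{any} sequence witnessing conicality of $z$ in $Z$ would, after passing to a subsequence and using properness/convergence dynamics on $\partial G$ together with compactness of the fiber $i^{-1}(z)$, give rise to a conical sequence for some point of $i^{-1}(z)$ with non-collapsed pole pair --- contradicting that $x$ (hence, via part (2) if $|i^{-1}(z)|>1$, every fiber point) is asymptotic to $L_i$. The main obstacle is this last implication: lifting a conicality witness on $Z$ to a genuine conical sequence on $\partial G$. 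This requires showing that if $g_n z\to a$ and $g_n w\to b\ne a$ for all $w\ne z$, then after passing to a subsequence $g_n^{-1}$ has a convergence-dynamics source/sink pair $(p_-,p_+)$ in $\partial G$ with $p_-\in i^{-1}(a)$, and that $z=i(x)$ being the ``non-escaping'' point forces some $x\in i^{-1}(z)$ to equal $p_+$ (the sink of $g_n$), making $\{g_n\}$ conical for that $x$ --- the care being to choose the subsequence so the sink lands in the fiber over $z$ rather than elsewhere, which uses that $i$ is injective-in-the-limit along orbits approaching a conical point.
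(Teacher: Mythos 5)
Your overall architecture matches the paper's (push conical sequences forward through $i$; lift a conical sequence for $z$ back to $\partial G$), but the convergence dynamics are misstated throughout, and this breaks the part (2) argument as written. With the definition used here, a conical sequence $(g_n)$ for $x$ with pole pair $(x_-,x_+)$ satisfies $g_nx\to x_+$ while $g_n$ converges to the \emph{constant} $x_-$ locally uniformly on $\partial G\setminus\{x\}$ --- not ``$g_n\to x_+$ off $x_-$'' and ``$g_n^{-1}\to x_-$ off $x_+$'' as you assert. Consequently your claim $g_ny\to x_+$ for a second fiber point $y\ne x$ is false (in fact $g_ny\to x_-$), and your comparison of $\lim g_nz$ with $\lim g_n^{-1}z$ is a non sequitur: these are limits of two different sequences and nothing forces them to agree; the concluding ``a pole pair separating the fiber is impossible'' sentence is an assertion, not an argument. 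The correct computation applies the \emph{same} $g_n$ to both fiber points: $i(x_+)=\lim i(g_nx)=\lim g_nz=\lim i(g_ny)=i(x_-)$; equivalently, $(g_ny,g_nx)\in L_i$ converges in $\partial^2G$ to $(x_-,x_+)$ and $L_i$ is closed. That is the paper's one-line proof of (2), and your version does not reach it.

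In part (1) both substantive implications are left unproved. Your ``linking observation'' (a conical sequence for $x$ with $i(x_-)\ne i(x_+)$ exhibits $z=i(x)$ as conical in $Z$) only controls $g_n$ on points of $i(\partial G)$; conicality in $Z$ demands locally uniform convergence to a constant on all of $Z\setminus\{z\}$, which is obtained by extracting a convergence subsequence for the $G$-action on $Z$ and then identifying its exceptional point with $z$ using $i(x_+)\ne i(x_-)$ --- that identification is where the hypothesis enters, whereas the condition $i(x_-)\ne z$ you worry about is not actually needed. For the converse you explicitly flag the lifting statement as ``the main obstacle'' and only gesture at it (``injective-in-the-limit along orbits''), but this is precisely the content of Lemma~\ref{L:conical from conical}: after reducing to the minimal case $i(\partial G)=Z$ (Lemma~\ref{lem:subset}), extract convergence data $(x,x_-)$ on $\partial G$; every $y\ne x$ then has $g_{n_k}i(y)\to i(x_-)=z_-$ while $g_{n_k}z\to z_+\ne z_-$, so $i(y)\ne z$ for all $y\ne x$, forcing $i(x)=z$ and $i(x_\pm)=z_\pm$. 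You are right that one must then invoke part (2) (or the fact that this lemma forces $|i^{-1}(z)|=1$ when $z$ is conical) because the lifted sequence may be based at a different fiber point, but since neither the lift nor the pushforward nor the part (2) computation is actually carried out correctly, the proposal as it stands has the right skeleton and genuine gaps at each of its three load-bearing steps.
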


We also provide a geometric counterpart of Theorem~\ref{thm:A}:

\begin{theor}\label{thm:B} Let $G$ be a word-hyperbolic group and let $Z$ be a compact metrizable space equipped with a non-elementary convergence action of $G$ such that the Cannon-Thurston map $i: \partial G\to  Z$ exists and such that $i$ is not injective. Let $X$ be a $\delta$-hyperbolic (where $\delta\ge 1$) proper geodesic metric space equipped with a properly discontinuous cocompact isometric action of $G$ (so that $\partial G$ is naturally identified with $\partial X$).

Let $x\in \partial G$, let $z=i(x)\in Z$ and let $\rho$ be a geodesic ray in $X$ limiting to $x$.

Then the following are equivalent:
\begin{enumerate}
\item The point $z$ is a  conical limit point for the action of $G$ on $Z$.
\item There exist a geodesic segment $\tau=[a,b]$ in $X$ of length $\ge 100\delta$ and an infinite sequence of distinct elements $g_n\in G$ such that the $20\delta$-truncation $\tau'$ of $\tau$ is not a coarse $X$-leaf segment of $L_i$ and such that 
for each $n\ge 1$ the segment $g_n\tau$ is contained in a $6\delta$-neighborhood of $\rho$, and that $\lim_{n\to\infty} g_na=\lim_{n\to\infty} g_nb=x$.
\end{enumerate}
\end{theor}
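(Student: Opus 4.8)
The strategy is to deduce the geometric characterization from the dynamical one in Theorem~\ref{thm:A}, translating between the two via the standard coarse geometry of the $\delta$-hyperbolic space $X$ on which $G$ acts cocompactly. The key dictionary is: a sequence $g_n \in G$ "pulls back" $x \in \partial G$ to a conical sequence with a fixed pole pair $(x_-, x_+)$ precisely when, after passing to a subsequence, $g_n^{-1} \to x_+$ and $g_n \to x_-$ in $X \cup \partial X$, and $g_n^{-1} x$ stays in a bounded neighborhood of a fixed basepoint. Translating this through left-multiplication by $g_n$, this says that a fixed "witness" geodesic segment $\tau$ near the axis-like configuration for the pole pair gets carried by $g_n$ into a uniform neighborhood of the ray $\rho$, with both endpoints $g_n a, g_n b \to x$. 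So the condition in (2) about segments $g_n \tau$ lying in a $6\delta$-neighborhood of $\rho$ with endpoints converging to $x$ is exactly the geometric incarnation of having a conical sequence for $x$; and the condition that the truncation $\tau'$ is \emph{not} a coarse $X$-leaf segment of $L_i$ is the geometric incarnation of the pole pair $(x_-, x_+)$ \emph{not} lying in $L_i$, i.e. $i(x_-) \ne i(x_+)$.

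First I would set up the coarse-geometric preliminaries precisely: recall (from the earlier sections of the paper) the definition of a coarse $X$-leaf segment of $L_i$ and the statement, which I'll assume, that a pair $(x_-, x_+) \in \partial G \times \partial G$ lies in $L_i$ if and only if every sufficiently long subsegment of a geodesic $[x_-, x_+]$ in $X$ is a coarse $X$-leaf segment of $L_i$ — together with a quantified version saying that if $(x_-,x_+)\notin L_i$ then a segment of length $\geq 100\delta$ centered suitably on $[x_-,x_+]$ has $20\delta$-truncation which is not a coarse leaf segment. Second, I would prove the implication $\lnot(1) \Rightarrow \lnot(2)$ (equivalently $(2) \Rightarrow (1)$), which is the "easy" direction: given data $\tau, g_n$ as in (2), I extract from the convergence $g_n a, g_n b \to x$ and the containment $g_n\tau \subset N_{6\delta}(\rho)$ a conical sequence for $x$; then use the Morse lemma to see that the pole pair of this conical sequence is (coarsely) the endpoint pair of the bi-infinite geodesic through $\tau$, hence equals some $(x_-, x_+)$ with $[x_-,x_+]$ fellow-traveling $\tau$; the fact that $\tau'$ is not a coarse $X$-leaf segment of $L_i$ then forces $(x_-,x_+) \notin L_i$, so $x$ is not asymptotic to $L_i$, so by Theorem~\ref{thm:A}(1), $z$ is conical.

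Third, and this is where the real work lies, I would prove $(1) \Rightarrow (2)$. Assume $z = i(x)$ is conical. By Theorem~\ref{thm:A}(1), $x$ is not asymptotic to $L_i$: there exists a conical sequence $\{h_n\}$ for $x$ with pole pair $(x_-, x_+)$ such that $i(x_-) \ne i(x_+)$, i.e. $(x_-,x_+) \notin L_i$. Fix a geodesic line $\gamma$ in $X$ from $x_-$ to $x_+$, and let $\tau = [a,b] \subset \gamma$ be a segment of length $\geq 100\delta$, centered near a basepoint, chosen long enough that — by the quantified coarse-leaf criterion — its $20\delta$-truncation $\tau'$ is not a coarse $X$-leaf segment of $L_i$. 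Now I need to produce the elements $g_n$ with $g_n\tau \subset N_{6\delta}(\rho)$ and $g_n a, g_n b \to x$. The idea is to take $g_n = h_n^{-1}$ (after passing to a subsequence so that $h_n \to x_-$, $h_n^{-1} \to x_+$ conically): the defining property of a conical sequence says $h_n^{-1} \cdot o$ stays within bounded distance of a geodesic ray toward $x$, so its $g_n$-translates of the configuration near the axis of the pole pair land near $\rho$. The delicate points are quantitative: controlling the fellow-traveling constant so it is genuinely $\le 6\delta$ (this may require first replacing $\tau$ by a further translate or enlarging $X$'s role via the Morse lemma with explicit constants, and possibly conjugating $g_n$ to absorb a bounded error into passing to a subsequence), and verifying that $g_n a$ and $g_n b$ both converge to the \emph{same} boundary point $x$ rather than to the two endpoints of $\rho$'s ideal extension — this uses that $\tau$ has fixed finite length, so $g_n \tau$ has fixed diameter, and both endpoints escape to infinity along $\rho$ in the same direction. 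I expect the main obstacle to be exactly this quantitative bookkeeping: massaging the conical sequence and the witness segment $\tau$ so that the fellow-traveling constant is the specific value $6\delta$ and the truncation constant is $20\delta$, consistent with whatever constants appear in the coarse-leaf-segment machinery developed earlier in the paper; everything else is a routine translation between the dynamical language of Theorem~\ref{thm:A} and the geometric language via the $G$-action on $X$.
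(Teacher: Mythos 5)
Your proposal is correct in outline, but it is organized quite differently from the paper's argument. The paper does not deduce Theorem~\ref{thm:B} from Theorem~\ref{thm:A} at all: both implications are run directly through Bowditch's criterion (Proposition~\ref{prop:bow}). In the direction $(1)\Rightarrow(2)$ the paper uses the non-injectivity of $i$ in an essential way: it fixes a point $y\in End(L_i)$ with $s=i(y)\ne z$, applies Proposition~\ref{prop:bow} to the pair $(s,z)$ to obtain elements $h_n$, matches subsegments of a geodesic from $y$ to $x$ against an exhaustion of the limit geodesic $\gamma_\infty$, and must rule out the possibility that the matched subsegments drift toward the $y$-end of the geodesic (this is exactly where $y\in End(L_i)$ and closedness of $L_i$ enter). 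Your route avoids this dichotomy, and indeed does not use non-injectivity in this direction, because Theorem~\ref{thm:A}(1) hands you a conical sequence $(h_n)$ for $x$ in $\partial G$ with pole pair $(x_-,x_+)\notin L_i$; then $h_n\rho$ (or $h_n[y,x]$ for a fixed $y\ne x$) subconverges to a geodesic from $x_-$ to $x_+$ which is $2\delta$-Hausdorff close to your chosen $\gamma\supset\tau$, so $h_n^{-1}\tau$ lies in the $6\delta$-neighborhood of $\rho$ and $h_n^{-1}a,h_n^{-1}b\to x$ for large $n$. In the direction $(2)\Rightarrow(1)$ the paper verifies the criterion of Proposition~\ref{prop:bow} for \emph{every} $s\ne z$ with the same elements $g_n$ (after reducing to $i(\partial G)=Z$ via Lemma~\ref{lem:subset}), whereas you build one conical sequence $(g_n^{-1})$ for $x$ with pole pair outside $L_i$ and quote Theorem~\ref{thm:A}; the underlying geometry ($\tau$ is $8\delta$-close to $g_n^{-1}[y,x]$, and the $20\delta$-truncation is then $2\delta$-close to the limit geodesic, so the pole pair cannot be a leaf) is the same in both treatments. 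Your approach buys modularity (the dynamical work is done once, in Theorem~\ref{thm:A}); the paper's buys independence from Theorem~\ref{thm:A} and a uniform treatment of both directions.

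Two caveats you should address when writing this up. First, in $(2)\Rightarrow(1)$, ``$x$ is not asymptotic to $L_i$'' plus Theorem~\ref{thm:A}(1) alone does not yield that $z$ is conical, since part (1) requires that \emph{no} point of $i^{-1}(z)$ be asymptotic to $L_i$; you must also invoke part (2) (or observe that the existence of a non-asymptotic preimage forces $|i^{-1}(z)|=1$). Second, in $(1)\Rightarrow(2)$ the $6\delta$ bound cannot come from the assertion that $h_n^{-1}\cdot o$ stays within ``bounded distance'' of a ray to $x$ --- that constant is uncontrolled; it must come, as your Morse-lemma remark suggests, from the convergence of the translated rays to a geodesic with endpoints $x_-,x_+$, which is within $2\delta$ of $\gamma$. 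Relatedly, the ``quantified coarse-leaf criterion'' you propose to assume is not actually stated anywhere earlier in the paper; it is the compactness-plus-closedness argument (leaf geodesics meeting a fixed ball subconverge to a leaf geodesic, and endpoints converge) that the paper itself uses implicitly when it selects the segment $[a_m,b_m]$, so you should supply that short argument rather than cite it.
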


See the definition of a ``coarse leaf segment"  and of other relevant terms in Section~\ref{sect:geom}.

Theorem~\ref{thm:A} is used in the proof of Theorem~\ref{thm:C},  as discussed in more details below. Theorem~\ref{thm:B} is used as a key ingredient in the proof of Theorem~6.6 of \cite{DKT}.

Theorems \ref{thm:A},\ref{thm:B} are partially motived by the result of  M.~Kapovich~\cite{K95}, who proved that in the setting of Cannon and Thurston's original construction \cite{CT} of a Cannon--Thurston map $i \colon \partial G \to \Sph^2$, for $G = \pi_1(\Sigma)$, if $z\in  \partial \Sph^2=\partial G$ has $|i^{-1}(z)| \ge 2$ then $z$ is not a conical limit point for the action of $G$ on $\Sph^2$.  This result was extended by Leininger, Long and Reid~\cite{LLR}, who proved that the same result for any doubly degenerate Kleinian representation (where $i$ exists from \cite{M14}), and later by Gerasimov~\cite{Ger12}, for arbitrary "$\times$-actions".   In fact, part (2) of Theorem~\ref{thm:A} follows from a general result of Gerasimov~\cite[Proposition 7.5.2]{Ger12} about conical limit points for $\times$-actions. Gerasimov also explained to us how one can derive part (1) of Theorem~\ref{thm:A}  from the results of \cite{Ger12} using a result of Bowditch. We provide a short direct proof of Theorem~\ref{thm:A} here.

It is known by results of Swenson~\cite{Swe} and Mitra~\cite{M11} that in the geometric context, where the Cannon-Thurston map $i:\partial G\to \partial Y$ arises from a properly discontinuous isometric action of a word-hyperbolic $G$ on a proper Gromov-hyperbolic space $Y$,  if $i:\partial G\to \partial Y$  is injective then the orbit-map $G\to Y$ is a quasi-isometric embedding; see Proposition~\ref{prop:inj} below for a precise statement. In this case every limit point $z\in \partial Y$ is conical and has exactly one pre-image under $i$.  Therefore Theorem~\ref{thm:A}  implies:

\begin{cor}\label{cor:noninj}
Let $G$ be a non-elementary word-hyperbolic group equipped with a properly discontinuous isometric action on a proper geodesic Gromov-hyperbolic space $Y$  without accidental parabolics. Suppose that the Cannon-Thurston map $i:\partial G\to\partial Y$ exists.

Then there exists $z\in i(\partial G)$ such that $z$ is a non-conical limit point for the action of $G$ on $\partial Y$ if and only if $i$ is not injective.
\end{cor}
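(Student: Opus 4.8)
The plan is to establish the two implications of the biconditional separately, deducing the ``if'' direction from part~(2) of Theorem~\ref{thm:A} and the ``only if'' direction, in contrapositive form, from Proposition~\ref{prop:inj}; both of these inputs are already available, so the argument reduces to checking that their hypotheses apply in the present geometric situation. First one records that $Z:=\partial Y$ is compact and metrizable (as $Y$ is proper, geodesic and Gromov-hyperbolic), that the properly discontinuous isometric action of $G$ on $Y$ induces a convergence action of $G$ on $\partial Y$ \cite{Tu98,Fre}, and that this action is non-elementary because $G$ is non-elementary word-hyperbolic and therefore has infinite limit set in $\partial Y$. Thus $i$ is a Cannon-Thurston map in the sense of Definition~\ref{D:CT}, and the hypotheses of Theorem~\ref{thm:A} are met with $Z=\partial Y$. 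Now if $i$ is not injective, choose distinct $x,y\in\partial G$ with $i(x)=i(y)=:z$; then $z\in i(\partial G)$ and $|i^{-1}(z)|\ge 2>1$, so Theorem~\ref{thm:A}(2) gives that $z$ is a non-conical limit point for the action of $G$ on $\partial Y$. This proves the ``if'' implication.

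For the ``only if'' implication I would prove the contrapositive: if $i$ is injective then every $z\in i(\partial G)$ is conical, so that $i(\partial G)$ contains no non-conical limit point. This is (the relevant part of) Proposition~\ref{prop:inj}: by the results of Swenson~\cite{Swe} and Mitra~\cite{M11}, injectivity of $i$ forces the orbit map $G\to Y$, $g\mapsto gy_0$, to be a quasi-isometric embedding, and in that case the orbit $Gy_0$ is quasiconvex in $Y$ and every point of the limit set $i(\partial G)\subseteq\partial Y$ is a conical limit point for the action of $G$ on $\partial Y$ (indeed with a single $i$-preimage). Alternatively, this direction follows directly from Theorem~\ref{thm:A}(1): when $i$ is injective the set $L_i$ is empty, and since every point of $\partial G$ is a conical limit point for the action of $G$ on $\partial G$, no $x\in\partial G$ is asymptotic to $L_i$; by Theorem~\ref{thm:A}(1) no $z\in i(\partial G)$ is then non-conical.

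I do not expect a genuine obstacle here: the substantive content is entirely contained in Theorem~\ref{thm:A} and Proposition~\ref{prop:inj}, and the hypothesis of no accidental parabolics enters only through the latter (together with Proposition~\ref{prop:ct-geom}). The one step that deserves an explicit sentence is the verification that the action of $G$ on $\partial Y$ is a non-elementary convergence action, so that Theorem~\ref{thm:A} may legitimately be invoked.
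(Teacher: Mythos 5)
Your proof is correct, and the ``if'' half coincides with the paper's: pick two distinct preimages of a point and apply Theorem~\ref{thm:A}(2). Where you differ is the ``only if'' half, which the paper handles without invoking Proposition~\ref{prop:inj} at all: if $i$ is injective then, since $\partial G$ and $i(\partial G)$ are compact and Hausdorff, $i$ is a $G$-equivariant homeomorphism onto its image; every point of $\partial G$ is conical for the action of $G$ on $\partial G$ because $G$ is hyperbolic \cite{Tu98}; hence every point of $i(\partial G)$ is conical for the action on $i(\partial G)$, and Lemma~\ref{lem:subset} promotes this to conicality for the action on $\partial Y$. That argument is precisely the first half of the paper's own proof of Proposition~\ref{prop:inj}, so your primary route (injectivity $\Rightarrow$ quasi-isometrically embedded orbit $\Rightarrow$ limit points conical with a single preimage) detours through the stronger quasiconvexity conclusion and then leans on a conicality fact for quasi-isometrically embedded orbits that the paper asserts only in the introduction and never proves; the direct homeomorphism-plus-Lemma~\ref{lem:subset} argument buys the same conclusion entirely from ingredients established in the paper. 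Your alternative via Theorem~\ref{thm:A}(1) with $L_i=\emptyset$ (using that every $x\in\partial G$ admits a conical sequence, so no $x$ is asymptotic to the empty lamination) is equally valid and essentially as economical as the paper's. One refinement: non-elementarity of the convergence action of $G$ on $\partial Y$, which you rightly flag as the hypothesis check needed for Theorem~\ref{thm:A}, does not follow from non-elementarity of $G$ alone---a non-elementary hyperbolic group acts properly discontinuously on a combinatorial horoball, a proper hyperbolic space whose boundary is a single point---so the ``no accidental parabolics'' hypothesis is doing work here beyond Propositions~\ref{prop:ct-geom} and~\ref{prop:inj}: it forces every infinite-order element to act loxodromically on $\partial Y$, whence the limit set is infinite and the boundary action is indeed non-elementary.
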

\begin{proof}
If $i$ is not injective and $x_1,x_2\in \partial G$ are points such that $x_1\ne x_2$ and that $i(x_1)=i(x_2)$ then, by part (2) of Theorem~\ref{thm:A}, $z=i(x_1)=i(x_2)$ is not conical.
If the map $i$ is injective, then, since $\partial G$ and $i(\partial G)\subseteq \partial Y$ are compact and Hausdorff, the map $i$ is a $G$-equivariant homeomorphism between $\partial G$ and $i(\partial G)$.  Since $G$ is hyperbolic, every point of $\partial G$ is conical for the action of $G$ on $\partial G$, see~\cite{Tu98}.  Therefore every $z\in i(\partial G)$ is conical for the action of $G$ on $i(\partial G)$ and hence, by Lemma~\ref{lem:subset}, also for the action of $G$ on $\partial Y$.
\end{proof}

The main result of this paper is:

\begin{theor} \label{thm:C}
Suppose a word-hyperbolic group $G$ acts on a compact metrizable space $Z$ as a non-elementary convergence group without accidental parabolics, and suppose that there exists a non-injective Cannon--Thurston map $i \colon \partial G \to Z$.  Then there exists a non-conical limit point $z \in Z$ with $|i^{-1}(z)| = 1$.
\end{theor}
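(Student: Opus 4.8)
The plan is to use Theorem~\ref{thm:A}(1): a point $z\in i(\partial G)$ is non-conical with $|i^{-1}(z)|=1$ precisely when the unique preimage $x=i^{-1}(z)$ is asymptotic to $L_i$ (and $x$ is not itself a limit of a pair in $L_i$ in a way that forces multiplicity). So it suffices to produce a single point $x\in\partial G$ with $i^{-1}(i(x))=\{x\}$ such that every conical sequence for $x$ has pole pair in $L_i$. Since $i$ is not injective, fix $(p,q)\in L_i$, so $p\ne q$ but $i(p)=i(q)$. The idea is to ``stack up'' translates of the pair $(p,q)$ along a single geodesic ray. Concretely, I would choose an infinite geodesic ray $\rho$ in a Cayley graph $X$ of $G$ and a sequence $g_n\in G$ such that the translated geodesic lines $g_n(p,q)$ (the bi-infinite geodesics from $p$ to $q$) fellow-travel longer and longer initial segments of $\rho$, with $g_n\to x:=\rho(\infty)$ in the sense that $g_n\cdot o$ marches to infinity along $\rho$. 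Then any conical sequence $\{h_k\}$ for $x$ — after passing to a subsequence so that $h_k^{-1}$ converges locally uniformly away from a pole $x_-$ to the constant $x_+$ — must, because it tracks $\rho$, capture one of these nested fellow-traveling configurations; pulling back by $h_k^{-1}$ converts the long fellow-traveling of $g_n(p,q)$ with $\rho$ into the statement that the pole pair $(x_-,x_+)$ is a limit of translates $h_k^{-1}g_{n(k)}\{p,q\}$, hence equals a limit pair $(\lim w_k p,\lim w_k q)$; since $i(w_kp)=i(w_kq)$ for all $k$ and $i$ is continuous and $G$-equivariant, $i(x_-)=i(x_+)$, i.e. $(x_-,x_+)\in L_i$. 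This shows $x$ is asymptotic to $L_i$.

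The remaining — and genuinely delicate — point is to arrange simultaneously that $|i^{-1}(i(x))|=1$. This is where the hypothesis of \emph{no accidental parabolics} and the freedom in choosing $\rho$ and the $g_n$ must be exploited. The natural strategy is a diagonal/Baire-type or explicit combinatorial construction: build $x$ as a limit of longer and longer words, at stage $n$ committing to a prefix that both (a) continues to shadow enough translated copies of $\{p,q\}$ to guarantee asymptoticity to $L_i$, and (b) avoids, for each $y\in\partial G$ already ``threatening'' to be a second preimage, the configurations that would force $i(y)=i(x)$. For (b) one wants to use that a point with $\ge 2$ preimages is ``expensive'' — every such coincidence $i(y)=i(x)$ with $y\ne x$ produces, via Theorem~\ref{thm:A}(2) applied in reverse and the structure of $L_i$ (which, by the no-accidental-parabolics hypothesis and Proposition~\ref{prop:ct-geom}/Proposition~\ref{prop:ct-unique}-type input, is a closed, $G$-invariant, ``lamination-like'' subset of $\partial G\times\partial G\setminus\Delta$ that is nowhere dense as a subset of pairs), constraints that one can steer away from at infinitely many stages while preserving (a). Intuitively: $L_i$-leaves through $x$ are needed for non-conicality, but we only need them on a cofinal set of scales along $\rho$, leaving room at the complementary scales to kill extra preimages.

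In carrying this out I would first record the precise structural facts about $L_i$ I need: (i) $L_i$ is $G$-invariant and its closure $\overline{L_i}$ in $\partial G\times\partial G$ adds only diagonal points (this uses that $i$ has no accidental parabolics and is the content of the ``no accidental parabolics'' normalization); (ii) the fibers of $i$ are the classes of the equivalence relation generated by $L_i$, and a point is a non-trivial fiber iff it meets some $L_i$-pair; (iii) a genericity statement — the set of $x\in\partial G$ lying on \emph{some} leaf is ``small'' (e.g. has empty interior), so there is plenty of room. Then the construction is an interleaved two-task recursion on a sequence of nested cylinders in $\partial G$: the ``asymptotic to $L_i$'' task is met by periodically inserting translated leaf-shadows along $\rho$ (finitely much work per stage, always possible by $G$-invariance of $L_i$ and minimality of the $G$-action on $\partial G$); the ``single preimage'' task is met by, at the complementary stages, shrinking the cylinder to avoid the finitely many bad directions that the current approximation of $x$ could be $L_i$-related to, which is possible exactly because $\overline{L_i}\setminus\Delta=L_i$ is nowhere dense in the pair space. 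The main obstacle is verifying that these two requirements do not conflict — that inserting the mandatory leaf-shadows for non-conicality at infinitely many scales does not itself force an extra preimage — and this is precisely where one must quantify ``how much of $\rho$'' needs to be shadowed (a cofinal, sparse set of scales suffices, by the definition of conical sequence, which only requires the \emph{existence} of the fellow-traveling configuration, not its density) against ``how much room'' the nowhere-density of $L_i$ leaves at the other scales.
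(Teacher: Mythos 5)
Your overall plan---reduce via Theorem~\ref{thm:A}(1) to producing a point $x$ that is asymptotic to $L_i$ but has trivial fiber, and build $x$ as the endpoint of a ray that shadows translates of a fixed leaf at infinitely many scales, so that closedness of $L_i$ in $\partial^2 G$ forces every pole pair into $L_i$---is the same plan as the paper's, and your limit argument for asymptoticity is essentially the one used there. The genuine gap is exactly at the point you yourself flag as delicate: you have no working mechanism for $|i^{-1}(i(x))|=1$. First, as literally stated your first paragraph defeats itself: if translates $g_n(p,q)$ fellow-travel (with uniform constants) longer and longer \emph{initial} segments of $\rho$, then a compactness (Arzel\`a--Ascoli) argument extracts a bi-infinite geodesic at bounded distance from all of $\rho$ whose endpoint pair is a limit of the pairs $(g_np,g_nq)\in L_i$; since $L_i$ is closed in $\partial^2 G$, this produces a leaf with endpoint $x$, so $|i^{-1}(i(x))|\ge 2$. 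Your later amendment (leaf shadows only on a cofinal, sparse set of scales) is the right idea, but the tool you propose for excluding extra preimages---nowhere-density of $L_i$ plus avoiding ``finitely many bad directions'' at each stage---cannot certify it: $End(L_i)$ is a nonempty $G$-invariant subset of $\partial G$, hence dense (the $G$-action on $\partial G$ is minimal), and $x\in End(L_i)$ is a tail condition on the ray, not decided by any finite prefix; avoiding finitely many directions at finite stages does not prevent some leaf from shadowing the whole ray, especially since your construction keeps inserting long leaf segments by design. Note also that your structural fact (i) is automatic from continuity of $i$ (the relation $\{i(x)=i(y)\}$ is closed in $\partial G\times\partial G$), so it is not where the no-accidental-parabolics hypothesis gets used---and in fact your proposal never uses that hypothesis in any substantive way, although it must enter somewhere.

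What is missing is precisely the paper's key Lemma~\ref{L:poison power}: for every $p$ there is $c(p)$ such that no leaf of $\mathcal L_i$ comes within $\delta+D$ of a subsegment of length $\ge c(p)$ of a $p$-periodic $r$-local geodesic. Its proof is where the hypothesis is consumed: a violating sequence of leaves limits (using closedness of $L_i$ and $G$-invariance) onto a leaf sharing both endpoints with the axis of the periodic element $g$, so $i(g^{-\infty})=i(g^{+\infty})$ and, by Lemma~\ref{L:CT fixed points}, $g$ would be an accidental parabolic. With this lemma the two tasks in your recursion no longer conflict: the paper builds the ray $w_\infty$ out of subwords of a fixed leaf ray $\ell_+$ (so that, for any conical sequence, longer and longer centered subwords are leaf segments and the pole pair lands in $L_i$), interlaced with high powers $\alpha_m^{\kappa_m}$ with $\kappa_m p_m>c(p_m)$ of words occurring in $\ell_+$; these ``poison'' blocks appear in every tail of the ray and, by the lemma, no leaf can stay $(\delta+D)$-close to the ray across them, whence $x\notin End(L_i)$ and the fiber is a single point. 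Without a quantitative obstruction of this kind (or some other concrete use of the no-accidental-parabolics hypothesis), your interleaved recursion has no way to rule out an extra preimage, so the proposal as written does not yet prove the theorem.
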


Theorem~\ref{thm:C} applies whenever $G_1$ is a non-elementary non-quasiconvex word-hyperbolic subgroup of a word-hyperbolic group $G_2$ such that the Cannon-Thurston map $\partial G_1\to\partial G_2$ exists. Similarly, Theorem~\ref{thm:C} applies whenever $\Sigma$ is a closed hyperbolic surface and $\pi_1(\Sigma)$ is equipped with a properly discontinuous isometric action on $\Hy^3$ without accidental parabolics, assuming that the Cannon-Thurston map $\mathbb S^1=\partial \pi_1(\Sigma)\to \partial \Hy^3=\mathbb S^2$ exists  and is non-injective.

Brian Bowditch (private communication) showed us another argument for obtaining the conclusion of Theorem~\ref{thm:C} for a large class of Kleinian groups, including the original case of a closed hyperbolic 3-manifold fibering over a circle. His argument is different from the proof presented in this paper and relies on the Kleinian groups and 3-manifold methods.

Another application of our results concerns \lq\lq controlled concentration points".
Originally, the notion of a controlled concentration point was defined for a properly discontinuous isometric action of a torsion-free group $G$ on  $\Hy^n$.
A point $x$ in $\partial \Hy^n$ is called a controlled concentration point of $G$ when $x$ has a neighborhood $V$ such that for any neighborhood $U$ of $x$ there is $g \in G$ with $g U \subset V$ and $x \in g(V)$.
This is equivalent to saying that there is a sequence of elements $(g_n)_{n \ge 1} \subset G$ such that $g_n(x) \rightarrow x$ and $(g_n|_{\partial \Hy^n \setminus \{x\}})$ converges locally uniformly to a constant map to some point $y \neq x$.
Aebischer, Hong and McCullough \cite{AHM} showed that a limit point $x\in \partial \Hy^n$ is a controlled concentration point if and only if it is an endpoint of a lift of a recurrent geodesic ray in $M:=\Hy^n/G$. A geodesic ray $\alpha(t)$ in $M$ is called {\it recurrent} if for any $t_0$, there exists a sequence $\{t_i\}$ with $t_i\rightarrow\infty$ such that $\displaystyle{\lim_{i \to \infty} \alpha'(t_i)} =  \alpha'(t_0)$ in the unit tangent bundle of $M$. 
They also showed there exist non-controlled concentration points in the limit set of a rank-2 Schottky group.
%\marginpar{\tiny recurrent here means ``coming back arbitrarily close to the starting point''?}

We generalize the notion of controlled concentration points to points at infinity of general word-hyperbolic groups by adopting the latter condition above as its definition; see Definition~\ref{defn:controlled} below. As an application of  Theorem C, we get the following existence theorem of non-controlled concentration points:

\begin{theor}\label{thm:D}
Let $G$ be a non-elementary torsion-free word-hyperbolic group. Then there exists $x\in \partial G$ which is not a controlled concentration point.
\end{theor}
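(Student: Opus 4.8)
The plan is to derive Theorem~\ref{thm:D} from Theorems~\ref{thm:A} and~\ref{thm:C} by translating the notion of a controlled concentration point into the language of conical sequences and pole pairs. Suppose for the moment that we are given a compact metrizable space $Z$ carrying a non-elementary convergence action of $G$ without accidental parabolics, together with a \emph{non-injective} Cannon--Thurston map $i\colon\partial G\to Z$. Then Theorem~\ref{thm:C} furnishes a non-conical limit point $z\in Z$ with $|i^{-1}(z)|=1$; let $x\in\partial G$ be its unique preimage. Since $z$ is non-conical, part~(1) of Theorem~\ref{thm:A} forces some point of $i^{-1}(z)$ to be asymptotic to $L_i$, and as $x$ is the only candidate, $x$ is asymptotic to $L_i$.

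I claim this $x$ is not a controlled concentration point for the action of $G$ on $\partial G$, which is what we want. Unwinding Definition~\ref{defn:controlled}, if $x$ were a controlled concentration point there would be a sequence $(g_n)_{n\ge1}$ in $G$ with $g_n(x)\to x$ and with the restrictions $g_n|_{\partial G\setminus\{x\}}$ converging locally uniformly to the constant map with value $y$, for some $y\ne x$. Such a $(g_n)$ is exactly a conical sequence for $x$ (in the sense of Definition~\ref{Def:conical}) whose pole pair $(x_-,x_+)$ satisfies $\{x_-,x_+\}=\{x,y\}$: the distinguished point of the sequence is $x$, the locally uniform limit away from it is $y$, and the image of the distinguished point converges to $x$. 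Since $x$ is asymptotic to $L_i$, this pole pair lies in $L_i$, so $i(y)=i(x)=z$ with $y\ne x$; but then $y\in i^{-1}(z)=\{x\}$, i.e.\ $y=x$, a contradiction. Hence no such $(g_n)$ exists and $x$ is not a controlled concentration point.

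It therefore remains to construct, for an arbitrary non-elementary torsion-free word-hyperbolic group $G$, a non-injective Cannon--Thurston map $i\colon\partial G\to Z$ satisfying the hypotheses of Theorem~\ref{thm:C}; this is the main difficulty. The natural route is to realize $G$ as a non-quasiconvex subgroup of a word-hyperbolic group $\hat G$ for which the Cannon--Thurston map $\partial G\to\partial\hat G$ exists, and to take $Z=\partial\hat G$: the restricted action of $G$ on $\partial\hat G$ is then a non-elementary convergence action, every non-trivial element of $G$ has infinite order and hence acts loxodromically on $\partial\hat G$ so that there are no accidental parabolics, and $i$ is non-injective by Proposition~\ref{prop:inj} since $G$ is not quasi-isometrically embedded in $\hat G$. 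When $G$ admits an automorphism $\phi$ with $G\rtimes_\phi\mathbb Z$ word-hyperbolic one may take $\hat G=G\rtimes_\phi\mathbb Z$ and invoke Mitra's Cannon--Thurston map~\cite{M2}; this already covers $G$ free and $G$ a closed surface group. The hard part is to carry this out uniformly for \emph{every} such $G$ --- in particular for rigid groups, such as fundamental groups of closed hyperbolic $3$-manifolds, which are co-Hopfian with finite outer automorphism group and hence admit no word-hyperbolic $\mathbb Z$-extension --- so that one needs either a more flexible word-hyperbolic overgroup in which $G$ is exponentially distorted, or a direct construction of a non-injective $G$-equivariant quotient of $\partial G$ onto a convergence-action space without accidental parabolics. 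Establishing that such data exists for all $G$ is where the essential work of the proof lies.
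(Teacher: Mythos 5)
Your reduction is sound: translating Definition~\ref{defn:controlled} into a conical sequence for $x$ with pole pair $(y,x)$, invoking Theorem~\ref{thm:C} to get a non-conical $z$ with $|i^{-1}(z)|=1$, and using Theorem~\ref{thm:A}(1) to see that the unique preimage $x$ is asymptotic to $L_i$ (so that a controlled-concentration sequence would force $i(y)=i(x)$ with $y\ne x$, contradicting injectivity at $z$) is exactly the content the paper packages as Proposition~\ref{propCC}, and your verification that taking $Z=\partial\hat G$ for a non-quasiconvex embedding $G\le\hat G$ yields a non-elementary convergence action without accidental parabolics and, via Proposition~\ref{prop:inj}, a non-injective Cannon--Thurston map, also matches the paper.

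However, there is a genuine gap, and it sits precisely where you flag it: you never establish, for an \emph{arbitrary} non-elementary torsion-free word-hyperbolic $G$, the existence of a word-hyperbolic overgroup in which $G$ is non-quasiconvex and for which the Cannon--Thurston map exists; you only observe that mapping tori handle special cases (and correctly note that rigid groups rule out that route). The paper closes this by citing two specific results: the non-quasiconvexity embedding theorem of Kapovich~\cite{Ka99}, which for every such $G$ produces a word-hyperbolic HNN extension $G_\ast=\langle G,t\mid t^{-1}Kt=K_1\rangle$ over a rank-two free quasiconvex subgroup $K\le G$ (with $K_1\le K$ also free of rank two) in which $G$ fails to be quasiconvex, and Mitra's theorem~\cite{M4} on Cannon--Thurston maps for trees/graphs of hyperbolic groups, which applies to this splitting of $G_\ast$ and yields the Cannon--Thurston map $\partial G\to\partial G_\ast$. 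Without quoting or proving such an embedding-plus-existence statement, your argument does not yet prove Theorem~\ref{thm:D}; with \cite{Ka99} and \cite{M4} inserted at the step you left open, it becomes the paper's proof.
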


In Appendix~\ref{s:appendix} we discuss several specific situations where  where the Cannon-Thurston map $i:\partial G\to Z$ is known to exist and where a more detailed description of the lamination $L_i$ is known.

\noindent{\bf Acknowledgements:}  We thank Victor Gerasimov and Leonid Potyagailo for their interest in the results of this paper and for bringing to our attention the paper~\cite{Ger12} and explaining to us the relationship between the results of \cite{Ger12} and our Theorem~\ref{thm:A}.  We are  grateful to  Brian Bowditch for helpful discussions regarding conical limit points and the Cannon-Thurston map, and to Sergio Fenley, Darren Long, and Alan Reid for useful conversations about convergence actions and conical limit points.  We thank the referee for useful comments.

\noindent{\bf Funding:} The second author was partially supported by a
Collaboration Grant no. 279836 from the Simons Foundation and by the
NSF grant DMS-1405146. The third
  author was partially supported by the NSF grant DMS-1207183 and DMS-1510034.  The last author was partially supported by the JSPS Grants-in-Aid 70183225.

%%%%%%%%%%%%%%%%%%
%%%%%%%%%%%%%%%%%%
\section{Definitions and basic facts}
%%%%%%%%%%%%%%%%%%
%%%%%%%%%%%%%%%%%%

\subsection{Convergence groups}

\begin{defn}[Convergence action] \label{Def:convergence}
An action of a group $G$ on a compact metrizable space $Z$ by homeomorphisms is called a \emph{convergence action} (in which case we also say that $G$ acts on $Z$ as a \emph{convergence group}) if for any infinite sequence $\gnn$ of distinct elements of $G$ there exist $a,b\in Z$ and a subsequence $\gnkk$ of $\gn$, called a {\em convergence subsequence}, such that the sequence of maps $\{g_{n_k}|_{Z\setminus \{a\}} \}$ converges uniformly on compact subsets to the constant map $c_b \colon Z\setminus \{a\}\to Z$ sending $Z\setminus \{a\}$ to $b$.   In this case we call $(a,b,\gnk)$ the \emph{convergence subsequence data}.  The action is called \emph{elementary} if either $G$ is finite or $G$ preserves a subset of $Z$ of cardinality $\le 2$, and it is called \emph{non-elementary} otherwise.

\end{defn}
Note that if $G$ acts as a convergence group on $Z$ and if $Z'\subseteq Z$ is a nonempty closed $G$-invariant subset, then the restricted action of $G$ on $Z'$ is also a convergence action.

For a group $G$ acting of a set $Z$ and for $g\in G$ denote $Fix_Z(g):=\{z\in Z| gz=z\}$. The following is a basic fact about convergence groups, see \cite[Lemma 3.1]{Bow99} and \cite{Tu98}:
\begin{prop}
Suppose $G$ acts as a convergence group on a compact metrizable space $Z$ and let $g\in G$. Then exactly one of the following occurs:
\begin{enumerate}
\item The element $g$ has finite order in $G$; in this case $g$ is said to be \emph{elliptic}.
\item The element $g$ has infinite order in $G$ and the fixed set $Fix_Z(g)$ consists of a single point; in this case $g$ is called \emph{parabolic}.
\item The element $g$ has infinite order in $G$ and the fixed set $Fix_Z(g)$ consists of two distinct points; in this case $g$ is called \emph{loxodromic}.
\end{enumerate}
Moreover, for every $k\ne 0$ the elements $g$ and $g^k$ have the same type;  also in cases (2) and (3) we have  $Fix_Z(g)=Fix_Z(g^k)$ and  the group $\langle g\rangle$ acts properly discontinuously on $Z\setminus Fix_Z(g)$. Additionally, if $g\in G$ is loxodromic then $\langle g\rangle$ acts properly discontinuously and cocompactly.
\end{prop}
It is also known that if $g\in G$ is parabolic with a fixed point $a\in Z$ then for every $z\in  Z$ we have $\limn g^nz=\displaystyle{\lim_{n\to-\infty}} g^nz=a$. Also, if $g\in G$ is loxodromic then we can write $Fix_Z(g)=\{a_-,a_+\}$ and for every $z\in Z\setminus \{a_+\}$ we have $\limn g^nz=a_-$, and  for every $z\in Z\setminus \{a_-\}$ we have $\displaystyle{\lim_{n\to-\infty}} g^nz=a_+$, and these convergences are uniform on compact subsets of $Z\setminus \{a_-,a_+\}$.

\begin{defn}[Limit set] If $G$ acts on $Z$ as a non-elementary convergence group, there exists a unique minimal nonempty closed $G$-invariant subset $\Lambda(G)\subseteq Z$ called the \emph{limit set} of $G$ in $Z$.
In this case $\Lambda(G)$ is perfect and hence $\Lambda(G)$ is infinite~\cite{Tu98}. If $\Lambda(G) = Z$, then we say that the action of $G$ on $Z$ is \emph{minimal}. 
\end{defn}

\begin{defn}[Conical limit point] \label{Def:conical}
Let $G$ act on $Z$ as a convergence group. A point $z\in Z$ is called a \emph{conical limit point} for the action of $G$ on $Z$ if there exist an infinite sequence $\gnn$ of distinct elements of $G$ and a pair of distinct points $z_-,z_+\in Z$ such that $\limn g_n z = z_+$ and that $(g_n|_{Z\setminus \{z\}} )$ converges uniformly on compact subsets to the constant map $c_{z_-} \colon Z\setminus \{z\}\to Z$ sending $Z\setminus \{z\}$ to $z_-$. We call such a sequence $g_n$ a
\emph{conical sequence} for $z$, and the pair $(z_-,z_+)$  the  \emph{pole pair} corresponding to $z$ and $(g_n)_{n\ge 1}$.  If every point of $Z$ is a conical limit point, then the action is called a {\em uniform convergence action}.  In particular, in this case the action is minimal.
\end{defn}
Note that if $g\in G$ is loxodromic with $Fix_Z(g)=\{a_-,a_+\}$ then both $a_+,a_-$ are conical limit points for the action of $G$ on $Z$, and one can use $(g^n)_{n \ge 1}$ as the conical sequence with pole pair $(a_-,a_+)$.
 
As usual, for $\delta\ge 0$, a \emph{$\delta$-hyperbolic space} is a geodesic metric space $X$ such that for every geodesic triangle in $X$ each side of this triangle is contained in the $\delta$-neighborhood of the union of two other sides. A metric space $X$ is  \emph{Gromov-hyperbolic} if there exists $\delta\ge 0$ such that $X$ is $\delta$-hyperbolic. A finitely generated group $G$ is called \emph{word-hyperbolic} if for some (equivalently, any) finite generating set $\mathcal S$ of $G$, the Cayley graph of $G$ with respect to $\mathcal S$ is Gromov-hyperbolic.
See \cite{BridHae} for basic background information about Gromov-hyperbolic spaces and word-hyperbolic groups; also see \cite{KB} on the background regarding boundaries of hyperbolic spaces and of word-hyperbolic groups.

\begin{exmp}
Let $G$ be an infinite word-hyperbolic group, and write $\partial G$ to denote the Gromov boundary.  Then the action of $G$ on $\partial G$ is a uniform convergence action.  In fact, according to a result of Bowditch \cite{Bow98}, if the action of a group $G$ on a compact metrizable space $Z$ is a uniform convergence action, then $G$ is a word-hyperbolic and there is a $G$--equivariant homeomorphism between $\partial G$ and $Z$.  
\end{exmp}

%Let $G$ be a word-hyperbolic group acting as a nonelementary convergence group on $Z$. Then every infinite order element $g \in G$ acts nontrivially on $Z$, and moreover has at most two fixed points, namely, $\{a,b\}$ where $(a,b,(g^{n_k})_{n \ge 1})$ is the convergence subsequence data for $(g^n)_{n \ge 1}$.  In particular, the kernel of such an action is finite.  If there are two, then the convergence condition applied to $(g^n)$ implies that one is an attracting fixed point, and the other repelling; i.e.~$g$ acts as a {\em loxodromic} (or with {\em north-south dynamics}), and both points are consequently conical limit points.

\begin{lem}\label{lem:subset}
Let a group $G$ act as a convergence group on $Z$ and let $Z'\subseteq Z$ be a nonempty infinite closed $G$-invariant subset, and let $z\in Z'$.
Then $z$ is a conical limit point for the action of $G$ on $Z$ if and only if $z$ is a conical limit point for the action of $G$ on $Z'$. 
\end{lem}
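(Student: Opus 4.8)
The plan is to prove the two implications separately. The implication ``$z$ conical in $Z$ $\Rightarrow$ $z$ conical in $Z'$'' is a routine point-set check, whereas ``$z$ conical in $Z'$ $\Rightarrow$ $z$ conical in $Z$'' genuinely uses the convergence property of the action of $G$ on $Z$ to promote convergence data from $Z'$ to all of $Z$.

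First I would handle the case where $z$ is conical for the action on $Z$. Let $(g_n)$ be a conical sequence for $z$ with pole pair $(z_-,z_+)$, so that $g_nz\to z_+$ and $g_n|_{Z\setminus\{z\}}\to c_{z_-}$ uniformly on compact subsets. Since $Z'$ is closed and $G$-invariant, each $g_nz\in Z'$, whence $z_+\in Z'$; and since $Z'$ is infinite we may pick $w\in Z'\setminus\{z\}$, and then $g_nw\to z_-$ forces $z_-\in Z'$. Any compact subset of $Z'\setminus\{z\}$ is a compact subset of $Z\setminus\{z\}$, so $g_n|_{Z'\setminus\{z\}}$ converges uniformly on compact subsets to the constant map $c_{z_-}\colon Z'\setminus\{z\}\to Z'$; together with $g_nz\to z_+$ and $z_-\neq z_+$ this exhibits $z$ as a conical limit point for the action on $Z'$, with the same sequence and pole pair.

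For the converse, let $(g_n)$ be a conical sequence for $z$ for the action on $Z'$, with pole pair $(z_-,z_+)\subseteq Z'$ and $z_-\neq z_+$. Applying the convergence group property of the action of $G$ on $Z$ to the sequence $(g_n)$ of distinct elements, pass to a subsequence $(g_{n_k})$ together with points $a,b\in Z$ such that $g_{n_k}|_{Z\setminus\{a\}}\to c_b$ uniformly on compact subsets. The crux is to show $a=z$ and $b=z_-$. To get $b=z_-$: since $Z'$ is infinite, choose $w\in Z'\setminus\{z,a\}$; then the $Z'$-conical hypothesis (applied to the subsequence) gives $g_{n_k}w\to z_-$, while $w\neq a$ gives $g_{n_k}w\to b$, so $b=z_-$. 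To get $a=z$: if $a\neq z$, then $g_{n_k}z\to b=z_-$, contradicting $g_{n_k}z\to z_+\neq z_-$; hence $a=z$. Consequently $(g_{n_k})$ is a conical sequence for $z$ for the action on $Z$, with pole pair $(z_-,z_+)$, which is what we want.

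The only delicate point, and the place where the hypothesis that $Z'$ be infinite is actually used, is the identification of the convergence data $(a,b)$ in the converse direction: one needs a point $w$ of $Z'$ distinct from both $z$ and $a$ in order to pin down $b$, and then $a=z$ follows at once from $z_-\neq z_+$. Everything else is immediate from the definitions of conical limit point and of a convergence action.
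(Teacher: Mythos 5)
Your proof is correct and follows essentially the same route as the paper: the forward direction by restricting the same conical sequence (with the small extra care, which the paper leaves implicit, that $z_\pm\in Z'$), and the converse by extracting convergence subsequence data $(a,b,(g_{n_k}))$ for the action on $Z$, pinning down $b=z_-$ via a point of $Z'\setminus\{a,z\}$, and deducing $a=z$ from $z_-\neq z_+$. No issues to report.
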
 
\begin{proof}
The ``only if" direction is obvious from the definition of a conical limit point.
Thus suppose that $z\in Z'$ is a conical limit point for the action of $G$ on $Z'$.  Let $(g_n)_n$ be a conical sequence for $z$ for the action of $G$ on $Z'$ and let  $(z_-,z_+)$ be the corresponding pole pair (also for the action of $G$ on $Z'$). Since $G$ acts on $Z$ as a convergence group, there exists convergence subsequence data $(a,b, \gnk)$.  By assumption $Z'$ is infinite and hence for every $x\in Z'\setminus \{a,z\}$ we have $\limn g_nx =  z_-$ and $\limn g_nx =  b$, it follows that $z_-=b$.  

We claim that $a=z$. Indeed, suppose that $a\ne z$. 
Since $z\in  Z\setminus \{a\}$, it follows that $\limn g_nz=b=z_-$. On the other hand, by assumption about $(z_-,z_+)$ being the pole pair for $z$ and $(g_n)_n$, it follows that $\limn g_nz=z_+$. By definition $z_-\ne z_+$, which gives a contradiction. Thus indeed $a=z$. Hence $z$ is a conical limit point for the action of $G$ on $Z$, as claimed.
\end{proof}

The following basic fact is well-known; see, for example, \cite{Bow99}. 
\begin{prop}\label{prop:bow} 
Let $G$ be a word-hyperbolic group acting as a non-elementary convergence group on a compact metrizable space $Z$.

Then a $G$-limit point  $z\in Z$ is a conical limit point for the action of $G$ on $Z$ if and only if for every point $s\in Z$ such that $s\ne z$ there exists an infinite sequence $g_n\in G$ of distinct elements of $G$ and points $s_\infty, z_\infty\in Z$ such that $s_\infty\ne z_\infty$ and such that  $\limn g_n z=z_\infty$ and $\limn g_n s=s_\infty$.
\end{prop}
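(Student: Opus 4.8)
The plan is to establish the two implications directly, the forward one being essentially immediate. For the ``only if'' direction, suppose $z$ is a conical limit point and let $(g_n)_{n\ge1}$ be a conical sequence for $z$ with pole pair $(z_-,z_+)$. Given any $s\in Z$ with $s\ne z$, the singleton $\{s\}$ is a compact subset of $Z\setminus\{z\}$, so $\lim_{n\to\infty}g_ns=z_-$, whereas $\lim_{n\to\infty}g_nz=z_+\ne z_-$; hence $(g_n)$ together with $z_\infty:=z_+$ and $s_\infty:=z_-$ witnesses the right-hand side. (This direction uses neither the word-hyperbolicity of $G$ nor the non-elementarity of the action.)

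For the ``if'' direction I would argue by contradiction: assume that $z$ satisfies the right-hand side but is \emph{not} a conical limit point. The first step is to reformulate non-conicality as follows: whenever $(h_n)_{n\ge1}$ is a sequence of distinct elements of $G$ and $(h_{n_k})_k$ is a convergence subsequence with data $(a,b,(h_{n_k}))$, one has $\lim_{k\to\infty}h_{n_k}z=b$. Indeed, if $a\ne z$ then $z\in Z\setminus\{a\}$ and this is immediate; and if $a=z$, then, passing by compactness of $Z$ to a further subsequence along which $h_{n_k}z\to q$, a value $q\ne b$ would make that sub-subsequence a conical sequence for $z$ with pole pair $(b,q)$, contrary to assumption — so every subsequential limit of $(h_{n_k}z)_k$ equals $b$, i.e.\ $h_{n_k}z\to b$.

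Now I would apply the hypothesis to an arbitrary $s\ne z$, obtaining distinct $g_n\in G$ and points $z_\infty\ne s_\infty$ with $g_nz\to z_\infty$ and $g_ns\to s_\infty$. Extracting a convergence subsequence $(g_{n_k})_k$ with data $(a,b)$, the reformulation gives $g_{n_k}z\to b$, hence $z_\infty=b$. If $s\ne a$ then $g_{n_k}s\to b=z_\infty$, contradicting $s_\infty\ne z_\infty$; therefore $a=s$, and then $g_{n_k}|_{Z\setminus\{s\}}\to c_b$ uniformly on compacta while $g_{n_k}s\to s_\infty\ne b$, so $(g_{n_k})$ is a conical sequence for $s$ with pole pair $(b,s_\infty)$. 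Thus every point of $Z$ other than $z$ is a conical limit point, so the set of points of $Z$ which are not conical limit points equals $\{z\}$. This set is $G$-invariant, since conicality is preserved by $G$: if $(h_n)$ is a conical sequence for $gw$ with pole pair $(p,q)$, then $(h_ng)_{n\ge1}$ is a conical sequence for $w$ with the same pole pair, as $g$ is a homeomorphism carrying compacta of $Z\setminus\{w\}$ onto compacta of $Z\setminus\{gw\}$. Hence $gz=z$ for every $g\in G$, so $\{z\}$ is a $G$-invariant subset of $Z$ of cardinality $1$, contradicting the non-elementarity of the action. Therefore $z$ is a conical limit point.

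I expect the only genuinely delicate point to be the bookkeeping in the third paragraph — in particular, deducing that the repelling point $a$ of the extracted convergence subsequence must equal $s$ rather than $z$, which is exactly what upgrades the given ``separating'' sequence into a bona fide conical sequence for $s$. The conceptual core is the observation that a non-conical point obeying the hypothesis would force \emph{every} other point of $Z$ to be conical, and hence would have to be a global fixed point of $G$ — impossible for a non-elementary convergence action. (Alternatively, one can derive the statement from the convergence-group results of Bowditch~\cite{Bow99}.)
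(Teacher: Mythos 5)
Your argument is correct, and it is worth noting that the paper itself does not prove Proposition~\ref{prop:bow} at all: it is stated as a well-known fact with a reference to Bowditch~\cite{Bow99}, so your write-up supplies a self-contained proof rather than a variant of one in the text. The forward direction is the trivial one, as you say. In the converse, the genuine issue is exactly the one you isolate: after extracting convergence subsequence data $(a,b)$ from the sequence provided by the hypothesis, the exceptional point $a$ could a priori be $s$ rather than $z$, in which case one only concludes that $s$ is conical. Your way out -- observing that non-conicality of $z$ forces $h_{n_k}z\to b$ for \emph{every} convergence subsequence (via the sub-subsequence argument), hence that under the hypothesis every $s\ne z$ is conical, so the non-conical set is the $G$-invariant singleton $\{z\}$, contradicting non-elementarity -- is a clean global maneuver, and the $G$-invariance of conicality via $(h_ng)$ is verified correctly. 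This route never uses word-hyperbolicity of $G$ nor the assumption that $z$ is a limit point, so it is in fact slightly more general than the statement; a reader comparing with \cite{Bow99} would find the same dichotomy resolved there by convergence-group bookkeeping rather than by your invariance/non-elementarity trick, but both are legitimate, and yours has the advantage of being short and entirely elementary given Definitions~\ref{Def:convergence} and~\ref{Def:conical}.
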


\begin{defn} [Accidental parabolic]
\label{def:ap}
Let $G$ be an infinite word-hyperbolic group acting as a non-elementary convergence group on a compact metrizable space $Z$.  An {\em accidental parabolic} for this action is an infinite order element $g \in G$ such that $g$ acts parabolically on $Z$.
\end{defn}

%%%%%%%%%%%%%%%%%%
%%%%%%%%%%%%%%%%%%
\subsection{Cannon-Thurston map}
%%%%%%%%%%%%%%%%%%
%%%%%%%%%%%%%%%%%%

\begin{defn}[Cannon--Thurston map]
Let $G$ be a word-hyperbolic group acting as a non-elementary convergence group on a compact metrizable space $Z$.
A map $i \colon \partial G\to Z$ is called a \emph{Cannon--Thurston map} if $i$ is continuous and $G$-equivariant.
\end{defn}

\begin{lem} \label{L:CT fixed points}
Let $G$ be a word-hyperbolic group acting as a non-elementary convergence group on a compact metrizable space $Z$ and suppose $i \colon \partial G \to Z$ is a Cannon--Thurston map.  Then:

\begin{enumerate} 
\item If $g$ acts as a loxodromic on $Z$, then the attracting and repelling fixed points in $\partial G$ of $g$, respectively, are sent by $i$ to the attracting and repelling fixed points of $g$ in $Z$, respectively.  

\item If $g$ is an accidental parabolic, then there is exactly one fixed point for $g$ on $Z$, which is the $i$--image of the two fixed points in $\partial G$.
\end{enumerate}
\end{lem}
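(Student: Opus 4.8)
The plan is to use only the continuity and $G$-equivariance of $i$ together with the convergence dynamics of infinite-order elements recalled above. I would begin with a setup common to both parts. In case (1) the element $g$ has infinite order because a loxodromic element of a convergence group does, and in case (2) an accidental parabolic has infinite order by definition; hence, since $G$ is word-hyperbolic, $g$ acts on $\partial G$ as a loxodromic, with distinct attracting and repelling fixed points $\xi^+,\xi^-\in\partial G$, satisfying $g^nx\to\xi^+$ for every $x\ne\xi^-$ and $g^{-n}x\to\xi^-$ for every $x\ne\xi^+$ (uniformly on compact subsets of the respective complements). Since $g\xi^{\pm}=\xi^{\pm}$, equivariance gives $g\,i(\xi^{\pm})=i(g\xi^{\pm})=i(\xi^{\pm})$, so $i(\xi^+),i(\xi^-)\in Fix_Z(g)$.

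This already disposes of part (2): if $g$ acts parabolically on $Z$ then $Fix_Z(g)=\{a\}$ is a single point, so $i(\xi^+)=i(\xi^-)=a$; equivalently, using that $g^nz\to a$ for every $z\in Z$, continuity plus equivariance along an orbit give $i(\xi^+)=\lim_n i(g^nx)=\lim_n g^ni(x)=a$ for any chosen $x\ne\xi^-$, and symmetrically for $\xi^-$.

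For part (1) the remaining task is to decide which of the two points $a_-,a_+$ of $Fix_Z(g)$ equals $i(\xi^+)$, where by the convention recalled above one has $g^nz\to a_-$ for every $z\ne a_+$. I would argue by contradiction: were $i(\xi^+)=a_+$, then for any $x\ne\xi^-$ we would get $i(g^nx)\to i(\xi^+)=a_+$ by continuity; but $i(g^nx)=g^ni(x)$, so if $i(x)\ne a_+$ this limit would equal $a_-$, forcing $a_-=a_+$, a contradiction. Thus $i(x)=a_+$ for \emph{every} $x\in\partial G\setminus\{\xi^-\}$, whence $i(\partial G)\subseteq\{a_+,i(\xi^-)\}$ would be a nonempty closed $G$-invariant subset of $Z$ of cardinality at most two, contradicting non-elementarity of the action of $G$ on $Z$. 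Hence $i(\xi^+)=a_-$, and applying this to $g^{-1}$ (whose attracting fixed points are $\xi^-$ on $\partial G$ and $a_+$ on $Z$) yields $i(\xi^-)=a_+$, which is exactly the assertion of (1).

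The one point needing genuine care is precisely this last argument in part (1): ruling out that $i$ collapses all of $\partial G\setminus\{\xi^-\}$ onto the repelling fixed point $a_+$. This is the only place non-elementarity of the $G$-action on $Z$ enters (alternatively one could invoke $\Lambda(G)\subseteq i(\partial G)$ together with the fact that $\Lambda(G)$ is infinite). Everything else is a direct combination of equivariance, continuity, and the convergence behavior of loxodromic and parabolic elements, with no real computation.
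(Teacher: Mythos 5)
Your proof is correct and takes essentially the same route as the paper's: equivariance places $i(\xi^{\pm})$ in $Fix_Z(g)$, the parabolic case is then immediate, and non-elementarity is what pins down the correct identification in the loxodromic case. The only cosmetic difference is that the paper argues directly (choosing $x$ with $i(x)\notin Fix_Z(g)$, which exists because $i(\partial G)$ is infinite, and pushing its orbit forward), whereas you argue by contradiction that otherwise $i(\partial G)$ would be a closed invariant set of at most two points.
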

\begin{proof}
Let $g\in G$ be an element of infinite order. Denote by $g^\infty$ and $g^{-\infty}$ the attracting and repelling points for $g$ in $\partial G$ respectively. Since $i$ is $G$-equivariant and the points $g^{\pm \infty}\in \partial G$ are fixed by $g$, it follows that $i(\{g^\infty, g^{-\infty}\})\subseteq Fix_Z(g)$.  If $g$ is parabolic and $Fix_Z(g)=\{a\}$ it follows that $i(g^{\pm\infty})=a$. 

Suppose now that $g$ acts on $Z$ loxodromically. Since by assumption $G$ acts on $Z$ and hence on $i(\partial G)$ as a non-elementary convergence group, the set $i(\partial G)$ is infinite. Hence there exists $x\in \partial G$ such that $i(x)\not\in Fix_Z(g)$   (and hence $x\not\in \{g^\infty, g^{-\infty}\}$).  If $g$ acts loxodromically on $Z$ with $Fix_Z(g)=\{a_+,a_-\}$  then $\limn g^n x=g^\infty$ and hence, by continuity and $g$-equivariance of $i$,  $\limn g^n i(x)=i(g^\infty)$. On the other hand, by definition  of a loxodromic element, since $i(x)\ne a_-$ we have  $\limn g^n i(x)=a_+$.  Thus $i(g^\infty)=a_+$. Replacing $g$ by $g^{-1}$ we get $i(g^{-\infty})=a_-$.  
\end{proof}

\begin{prop}[Cannon--Thurston map unique]\label{prop:ct-unique}
Let $G$ be a word-hyperbolic group acting as a non-elementary convegence group on a compact metrizable space $Z$, then any two Cannon--Thurston maps $i,j \colon \partial G \to Z$, if they exist, must be equal.
\end{prop}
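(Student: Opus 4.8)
\textbf{Proof plan for Proposition~\ref{prop:ct-unique}.}
The plan is to exploit the density in $\partial G$ of the attracting fixed points of loxodromic (in $\partial G$) elements of $G$, together with Lemma~\ref{L:CT fixed points}, which pins down the value of any Cannon--Thurston map on such points. First I would recall that since $G$ is a non-elementary word-hyperbolic group, the set $P=\{g^{+\infty}\mid g\in G \text{ of infinite order}\}$ of attracting fixed points of infinite-order elements is dense in $\partial G$ (this is the standard density of loxodromic fixed points for the uniform convergence action of $G$ on $\partial G$; it follows, e.g., from perfectness and minimality of $\Lambda(G)=\partial G$ together with the north-south dynamics of infinite-order elements). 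The key point is then that the value $i(g^{+\infty})$ is forced: by Lemma~\ref{L:CT fixed points}, if $g$ acts loxodromically on $Z$ then $i(g^{+\infty})$ is the attracting fixed point of $g$ in $Z$, and if $g$ acts parabolically on $Z$ then $i(g^{+\infty})$ is the unique fixed point of $g$ in $Z$; in either case $i(g^{+\infty})=j(g^{+\infty})$, since the right-hand side depends only on the dynamics of $g$ on $Z$ and not on the choice of Cannon--Thurston map. Note that an infinite-order $g\in G$ cannot act elliptically on $Z$: since $i$ is $G$-equivariant and the orbit map $G\to i(\partial G)$ has infinite image, $\langle g\rangle$ has infinite image in the homeomorphism group of $i(\partial G)$, so $g$ cannot have finite order in its action on $Z$ either (alternatively, one invokes that $i(g^{\pm\infty})\in Fix_Z(g)$ so $Fix_Z(g)\neq\emptyset$, hence $g$ is not elliptic by the classification in Proposition-Definition, as elliptic elements are exactly the finite-order ones).

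Carrying this out: I would argue that $i$ and $j$ agree on the dense set $P\subseteq\partial G$, and then conclude $i=j$ on all of $\partial G$ by continuity of $i$ and $j$ together with the Hausdorff property of $Z$. Concretely, given any $x\in\partial G$, pick a sequence $p_n\in P$ with $p_n\to x$; then $i(x)=\lim_n i(p_n)=\lim_n j(p_n)=j(x)$, where the outer equalities use continuity and the middle one uses $i|_P=j|_P$; uniqueness of limits in the metrizable (hence Hausdorff) space $Z$ gives $i(x)=j(x)$.

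I do not expect a serious obstacle here; the only points requiring care are (i) justifying that $P$ is dense in $\partial G$ — which I would either cite from standard references on boundaries of hyperbolic groups (e.g.\ \cite{KB}) or derive quickly from minimality of the $G$-action on $\partial G$ and the existence of at least one infinite-order element (guaranteed since $G$ is non-elementary, hence infinite and not virtually cyclic) with its north--south dynamics — and (ii) checking that an infinite-order element of $G$ never acts as an elliptic on $Z$, handled as above via Lemma~\ref{L:CT fixed points}. Everything else is a routine density-and-continuity argument.
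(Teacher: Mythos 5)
Your proposal is correct and follows essentially the same route as the paper: both argue that $i$ and $j$ are determined by their values on the dense set of attracting fixed points of infinite-order elements of $G$, where Lemma~\ref{L:CT fixed points} forces the value (attracting fixed point in $Z$ in the loxodromic case, the unique fixed point in the parabolic case), and then conclude by continuity. Your extra remark about ruling out elliptic behaviour is harmless but unnecessary, since by the classification of elements in a convergence action the elliptic elements are exactly those of finite order in $G$.
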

\begin{proof}
Since $i,j$ are continuous, they are determined by what they do to a dense set of points.  The set of attracting endpoints of any infinite order element $g \in G$ and its conjugates $\{g^h\}_{h \in G}$ forms such a dense set.  By Lemma \ref{L:CT fixed points}, $i$ and $j$ must agree on this set, hence must be equal.
\end{proof}
In the situation where Proposition~\ref{prop:ct-unique} applies, if a Cannon-Thurston map $i:\partial G\to Z$ exists, we will refer to $i$ as \emph{the Cannon-Thurston map}. 

There is a particular geometric situation where the Cannon-Thurston map has a more natural geometric meaning:

\begin{prop}\label{prop:ct-geom}
Let $G$ be a non-elementary word-hyperbolic group equipped with a properly discontinuous (but not necessarily co-compact) isometric action of a proper Gromov-hyperbolic geodesic metric space $Y$, so that every element of infinite order acts as a loxodromic isometry of $Y$.  Then the following hold:

\begin{enumerate}

\item Then $\partial Y$ is compact and $G$ acts on $\partial Y$ as a convergence group without accidental parabolics. (Thus Proposition~\ref{prop:ct-unique} applies.) 

\item Suppose the Cannon-Thurston map $i:\partial G\to\partial Y$ exists. Then for every $p\in Y$ the map
\[
f: G\cup \partial G\to Y\cup \partial Y
\]
given by $f(g)=gp$ for $g\in G$, and $f(x)=i(x)$ for $x\in \partial G$, is continuous for the hyperbolic compactification topologies on $G\cup \partial G$ and  $Y\cup \partial Y$.
\end{enumerate}

\end{prop}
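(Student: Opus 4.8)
The plan is to prove the two parts separately. Part~(1) is essentially bookkeeping with standard facts about boundaries of proper hyperbolic spaces. First, $\partial Y$ is compact and metrizable because $Y$ is proper, geodesic and hyperbolic, so that $Y\cup\partial Y$ is a compactification of $Y$ and $\partial Y$ carries a visual metric; that a group of isometries of such a space acts on its boundary as a convergence group is classical, so I would simply cite it (see \cite{Tu98,Fre}), observing in passing that the kernel of $G\to\mathrm{Isom}(Y)$ is finite---an infinite-order element of the kernel would act on $Y$ both trivially and loxodromically---so that an infinite set of distinct elements of $G$ acts through infinitely many distinct homeomorphisms of $\partial Y$ and the convergence property does apply to $G$. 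There are no accidental parabolics: an infinite-order $g\in G$ acts loxodromically on $Y$, hence fixes exactly the two endpoints of its axis on $\partial Y$, so it is loxodromic, not parabolic, for the boundary action. Finally, the boundary action is non-elementary: $G$ contains an infinite-order, hence $Y$-loxodromic, element whose two distinct fixed points lie in every nonempty closed $G$-invariant subset of $\partial Y$; were the action elementary, $G$ would preserve such a subset of exactly two points, which would then be the common fixed-point pair of every loxodromic element of $G$, forcing all of them to share one geodesic axis up to bounded Hausdorff distance, so that a rank-two free subgroup of $G$ (which exists since $G$ is non-elementary word-hyperbolic) would coarsely preserve that axis and act properly discontinuously and cocompactly on a bounded neighborhood of it, hence be virtually cyclic, a contradiction. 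Thus Proposition~\ref{prop:ct-unique} applies.

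For part~(2), fix a Cayley graph $\Gamma$ of $G$ with identity vertex $e$ and give $G\cup\partial G$ the subspace topology from $\Gamma\cup\partial\Gamma$, so each point of $G$ is isolated and $f$ is automatically continuous there. Continuity at a point $x\in\partial G$ then reduces, using metrizability of $G\cup\partial G$ and separating a convergent sequence into a part lying in $\partial G$ (where continuity of $i$ suffices) and a part lying in $G$, to the single assertion: if $g_n\in G$ and $g_n\to x$ in $\Gamma\cup\partial\Gamma$, then $g_np\to i(x)$ in $Y\cup\partial Y$. The organizing idea is to use the Cannon--Thurston map to transport dynamical information about the sequence $(g_n)$ acting on $\partial G$ over to $\partial Y$, and then locate the limit of $g_np$ by elementary hyperbolic geometry.

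The technical core, and the step I expect to be the main obstacle, is a dynamical lemma in the word-hyperbolic group $G$: if $g_n\to x$ in $\Gamma\cup\partial\Gamma$, then after passing to a subsequence there is $\zeta\in\partial G$ (namely $\zeta=\lim_n g_n^{-1}$) with $g_nw\to x$ for every $w\in\partial G\setminus\{\zeta\}$; equivalently, $(g_n)$ is a convergence sequence for the action of $G$ on $\partial G$ with attracting point $x$. I would prove this by a Gromov-product estimate: pass to a subsequence so that $g_n^{-1}\to\zeta$; given $w\neq\zeta$, one has $d\bigl(g_n^{-1},[e,w)\bigr)\to\infty$, since otherwise $g_n^{-1}$ would stay a bounded distance from points of the ray $[e,w)$ escaping to infinity, forcing $g_n^{-1}\to w\neq\zeta$. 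Then, using that the Gromov product is coarsely isometry-invariant and comparable to the distance from the basepoint to a connecting geodesic, $(g_nw\mid g_n e)_e=(w\mid e)_{g_n^{-1}e}$ is comparable to $d\bigl(g_n^{-1},[e,w)\bigr)\to\infty$, while $(g_n e\mid x)_e\to\infty$ because $g_n\to x$; $\delta$-hyperbolicity then gives $(g_nw\mid x)_e\to\infty$, i.e.\ $g_nw\to x$. Applying continuity and $G$-equivariance of $i$ then yields $g_n\,i(w)=i(g_nw)\to i(x)$ for every $w\in\partial G\setminus\{\zeta\}$.

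To finish, pass to a subsequence with convergence data $(a,b,g_n)$ for the action of $G$ on $\partial Y$ (available by part~(1)). Since $i(\partial G)$ is a nonempty closed $G$-invariant subset of $\partial Y$, it contains the limit set $\Lambda(G)$, which is perfect, hence infinite; so $i(\partial G\setminus\{\zeta\})$, which contains $\Lambda(G)\setminus\{i(\zeta)\}$, is infinite, and we may choose $w\neq\zeta$ with $i(w)\neq a$, giving $g_n\,i(w)\to b$ and $g_n\,i(w)\to i(x)$, whence $b=i(x)$. Finally, pick distinct ideal points $\xi_1,\xi_2$ of $Y$, both different from $a$ (possible since $\partial Y$ is infinite), and a geodesic line $[\xi_1,\xi_2]$ in $Y$; since $g_n\xi_1,g_n\xi_2\to b$, the geodesic line $g_n[\xi_1,\xi_2]$ escapes every bounded set and any sequence of points chosen on it converges to $b$ in $Y\cup\partial Y$, and since $g_np$ stays within the fixed distance $d(p,[\xi_1,\xi_2])$ of it we conclude $g_np\to b=i(x)$. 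A routine subsequence argument, using compactness and metrizability of $Y\cup\partial Y$, upgrades this to convergence of the full sequence. Beyond the dynamical lemma, the only remaining care is in the subsequence bookkeeping and in checking that the relevant sequences are eventually distinct, so that the convergence-group hypotheses apply.
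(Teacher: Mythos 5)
Your proposal is correct and follows essentially the same route as the paper: part (1) is quoted from the literature, and part (2) is reduced to the claim that $g_n\to x$ in $G\cup\partial G$ forces $g_np\to i(x)$, which is then proved by playing the convergence data for the $G$-actions on $\partial G$ and on $\partial Y$ against the continuity and equivariance of $i$, using that $i(\partial G)$ is infinite to dodge the exceptional points. The only real difference is presentational: you argue directly and prove by hand (via Gromov products and translated geodesic lines) the two compatibility facts---that $g_n\to x$ makes $x$ the attracting point on $\partial G$, and that the attracting point on $\partial Y$ is the limit of the orbit points $g_np$---which the paper, arguing by contradiction, simply cites to Tukia~\cite{Tu94}.
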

\begin{proof}
Part (1) is well-known and due to Tukia~\cite{Tu94}. 

For part (2), note that the topology on $G$ is discrete. Thus we only need to check continuity of $f$ at points of $\partial G$.  Since $i$ is assumed to be continuous, it suffices to establish the following:\\

\noindent{\bf Claim.}  If $x\in \partial G$ and $(g_n)_{n\geq1} \subset G$ is an infinite sequence of distinct elements of $G$ such that $\limn g_n= x$ in $G\cup\partial G$ then $\limn g_np= i(x)$ in $Y\cup \partial Y$ for every $p\in Y$.\\

Assume that $x$ and $(g_n)$ are as in the Claim, but that the sequence $g_np$ does not converge to $i(x)$ in $Y\cup\partial Y$. Since $G$ acts properly discontinuously on $Y$, it follows that, after replacing $g_n$ by a subsequence, we have 
$\lim_{n\to\infty} g_np= z$ for some  $z\in\partial Y$ such that $z\ne i(x)$. Then there exist a subsequence $g_{n_k}$ and points $a,b\in \partial G$ and $c,d\in \partial Y$ such that 
$(g_{n_k}|_{\partial G\setminus \{a\}})$ converges uniformly on compact sets to the constant map to $b$, and $g_{n_k}|_{\partial Y\setminus \{c\}}$ converges uniformly on compact sets to the constant map to $d$.
Moreover, the fact that $\limn g_n= x$ implies that $x=b$ and, similarly, the fact that $\limn g_np= z$ implies that $z=d$ (see \cite{Tu94}).
Since the set $i(\partial G)$ is infinite, we can find $y\in \partial G$ such that  $y\ne a$ and $i(y)\ne c$. Then, on one hand, we have  $\lim_{k\to\infty} g_{n_k} i(y)=d=z$.
On the other hand, $\limk g_{n_k} y=b=x$ and therefore, by continuity of $i$, we have  $\limk g_{n_k} i(y)=i(x)$. This contradicts  $z\ne i(x)$.
\end{proof}

A general result of Mj shows that in this situation injectivity of the Cannon-Thurston map is equivalent to the orbit map $X\to Y$ being a quasi-isometric embedding~\cite[Lemma~2.5]{M11}:
\begin{prop}\label{prop:inj}
Let $G$ and $Y$ be as in Proposition~\ref{prop:ct-geom}  and Let $p\in Y$. Then the following conditions are equivalent:
\begin{enumerate}
\item The Cannon-Thurston map $i:\partial G\to\partial Y$ exists and is injective.
\item The orbit map $G\to Y$, $g\mapsto gp$, is a quasi-isometric embedding.
\end{enumerate}
\end{prop}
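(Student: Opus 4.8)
The plan is to prove the two implications separately (this reproves the results of Swenson~\cite{Swe} and Mj~\cite{M11} cited above). Fix a finite generating set of $G$ and let $X$ be the corresponding Cayley graph --- a proper $\delta$-hyperbolic geodesic space on which $G$ acts geometrically, so that $\partial X$ is $G$-equivariantly identified with $\partial G$ --- and let $f\colon X\to Y$ send each vertex $g$ to $gp$ and each edge to a geodesic of $Y$; then $f$ is $G$-equivariant and $\lambda$-coarsely Lipschitz for some $\lambda\ge 1$. Proper discontinuity of the action makes $f$ \emph{uniformly proper}: for each $\rho>0$ there is $N(\rho)<\infty$ with $\#\{g\in G : gp\in B\}\le N(\rho)$ for every metric $\rho$-ball $B$ of $Y$ (if $gp,g'p\in B$ then $g^{-1}g'$ lies in the finite set $\{h\in G : d_Y(p,hp)\le 2\rho\}$). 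The direction $(2)\Rightarrow(1)$ is standard: if the orbit map is a quasi-isometric embedding then so is $f$, hence $f$ carries geodesic rays to uniform quasigeodesic rays and, by stability of quasigeodesics in hyperbolic spaces, induces a topological embedding $\partial f\colon\partial X\to\partial Y$; this map is continuous and $G$-equivariant because $f$ is, so it is a Cannon--Thurston map, and by Proposition~\ref{prop:ct-unique} it equals $i$, which is therefore injective.

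For $(1)\Rightarrow(2)$ I argue contrapositively: assuming $f$ is not a quasi-isometric embedding, I will produce distinct $\xi_-,\xi_+\in\partial G$ with $i(\xi_-)=i(\xi_+)$. If $i$ does not exist there is nothing to prove, so assume it does; then by Proposition~\ref{prop:ct-geom}(2), $g_n\to\xi$ in $G\cup\partial G$ implies $g_np\to i(\xi)$ in $Y\cup\partial Y$. Since $f$ is coarsely Lipschitz but fails the lower quasi-isometry bound, applying that failure with constants $n$ yields $a_n,b_n\in G$ with $L_n:=d_X(a_n,b_n)\to\infty$ and $d_Y(f(a_n),f(b_n))=o(L_n)$. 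Fix a geodesic $\sigma_n=[a_n,b_n]$ in $X$ and let $m_n$ be a vertex of $\sigma_n$ maximizing the Gromov product $D_n:=(f(a_n)\,|\,f(b_n))_{f(m_n)}$. The crucial claim is that $D_n\to\infty$: if some subsequence had $D_n\le C$, then (using $d_Y(q,[x,y])\le (x\,|\,y)_q+O(\delta)$) the image of the vertex set of $\sigma_n$ would lie in a bounded-radius neighborhood of the $Y$-geodesic $[f(a_n),f(b_n)]$ --- a ``tube'' whose core has length $\ell_n:=d_Y(f(a_n),f(b_n))=o(L_n)$; covering this tube by $O(\ell_n)+O(1)$ balls of bounded radius and using uniform properness, at most $O(\ell_n)+O(1)=o(L_n)$ of the $L_n+1$ distinct vertices of $\sigma_n$ could map into it, which is impossible for large $n$.

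Now translate by $m_n^{-1}\in G$, which sends $f(m_n)$ to $p$: the geodesic $\sigma_n$ becomes a geodesic $\gamma_n$ of $X$ through $e$ with endpoints $a_n':=m_n^{-1}a_n$ and $b_n':=m_n^{-1}b_n$, and, by $G$-invariance of the Gromov product, $(f(a_n')\,|\,f(b_n'))_p=D_n\to\infty$. Also, $(x\,|\,y)_w\le\min(d(w,x),d(w,y))$ at $w=f(m_n)$ gives $D_n\le d_Y(f(m_n),f(a_n))\le \lambda\,d_X(e,a_n')+\lambda$, and similarly for $b_n'$, so $d_X(e,a_n')\to\infty$ and $d_X(e,b_n')\to\infty$. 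By the Arzel\`{a}--Ascoli theorem (properness of $X$), after passing to a subsequence the $1$-Lipschitz geodesics $\gamma_n$ converge uniformly on compact sets to a bi-infinite geodesic $\gamma\colon\mathbb{R}\to X$ through $e$; set $\xi_\pm:=\gamma(\pm\infty)\in\partial X=\partial G$, so $\xi_-\ne\xi_+$. It is routine that $a_n'\to\xi_-$ and $b_n'\to\xi_+$ in $X\cup\partial X$, hence $f(a_n')=a_n'p\to i(\xi_-)\in\partial Y$ and $f(b_n')\to i(\xi_+)\in\partial Y$. But $(f(a_n')\,|\,f(b_n'))_p=D_n\to\infty$, and in a proper hyperbolic space two sequences that converge to boundary points and have Gromov product tending to infinity must converge to the \emph{same} boundary point; so $i(\xi_-)=i(\xi_+)$ with $\xi_-\ne\xi_+$, i.e.\ $i$ is not injective --- contradicting $(1)$.

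The step I expect to be the main obstacle is the claim $D_n\to\infty$. Metric properness of $f$ alone does not yield it: $G$ may be superlinearly --- even exponentially --- distorted in $Y$ (as for a surface-fiber subgroup of a fibered hyperbolic $3$-manifold group), so a naive midpoint estimate fails, and one genuinely needs the uniform properness coming from proper discontinuity together with the counting of orbit points in a bounded-radius tube whose core has length only $o(L_n)$. As a shorter, but less self-contained, alternative for $(1)\Rightarrow(2)$, one can instead note that an injective $i$ is a $G$-equivariant homeomorphism from $\partial G$ onto the infinite closed $G$-invariant set $i(\partial G)\subseteq\partial Y$; transporting the uniform convergence action of $G$ on $\partial G$ and applying Lemma~\ref{lem:subset} shows that every point of $i(\partial G)$ is a conical limit point for the action of $G$ on $\partial Y$, whereupon Swenson's theorem~\cite{Swe} --- that a finitely generated group acting properly discontinuously by isometries on a proper hyperbolic space all of whose limit points are conical has quasi-isometrically embedded orbits --- gives $(2)$.
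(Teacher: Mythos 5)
Your proof is correct, but for the substantive direction it takes a genuinely different route from the paper. The paper dismisses (2)$\Rightarrow$(1) as obvious and proves (1)$\Rightarrow$(2) by citation: an injective $i$ is a $G$-equivariant homeomorphism onto $i(\partial G)$, so every point of $i(\partial G)$ is conical for the action on $i(\partial G)$ and hence, by Lemma~\ref{lem:subset}, for the action on $\partial Y$, whereupon Swenson's theorem \cite{Swe} (equivalently, Lemma~2.5 of \cite{M11}) gives the quasi-isometric embedding --- this is exactly the ``shorter alternative'' you sketch in your last paragraph. Your main argument instead reproves that external input from scratch: assuming the orbit map fails the lower quasi-isometry bound, you take a vertex $m_n$ of the $X$-geodesic $[a_n,b_n]$ maximizing the Gromov product $D_n$, use the uniform properness supplied by proper discontinuity to rule out a bounded subsequence of $D_n$ (the counting of the $L_n+1$ distinct vertices mapping into a bounded-radius tube around a $Y$-geodesic of length $o(L_n)$), translate to the basepoint, extract a limiting bi-infinite geodesic with endpoints $\xi_\pm$, and combine Proposition~\ref{prop:ct-geom}(2) with the divergence $D_n\to\infty$ to conclude $i(\xi_-)=i(\xi_+)$ with $\xi_-\neq\xi_+$. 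The estimates and the logical structure (contrapositive, with the case ``$i$ does not exist'' handled trivially) are sound, you correctly identify $D_n\to\infty$ as the step where mere metric properness would not suffice, and the appeal to Proposition~\ref{prop:ct-unique} in (2)$\Rightarrow$(1) is legitimate since Proposition~\ref{prop:ct-geom}(1) ensures that uniqueness applies. What your route buys is self-containedness: beyond standard hyperbolic-space facts, the only input is Proposition~\ref{prop:ct-geom}(2), and no reliance on Swenson's quasiconvexity criterion; the price is length. The paper's route is much shorter and exploits the conical-limit-point machinery it has already developed (Lemma~\ref{lem:subset}), but rests on the nontrivial theorem of \cite{Swe}.
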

\begin{proof}
As noted above, this proposition holds by~\cite[Lemma~2.5]{M11}. The proposition also follows directly from the older result of Swenson~\cite{Swe}.
Indeed, (2) obviously implies (1). Thus assume that (1) holds and that the Cannon-Thurston map $i:\partial G\to\partial Y$ exists and is injective. Since both $\partial G$ and $\partial Y$ are compact and Hausdorff, the map $i$ is a $G$-equivariant homeomorphism between $\partial G$ and $Z'=i(\partial G)$.  Since $G$ is word-hyperbolic, every point of $\partial G$ is conical. Therefore every point of $Z'$ is conical for the $G$-action on $Z'$ and hence, by Lemma~\ref{lem:subset}, also for the action of $G$ on $\partial Y$. The main result of Swenson~\cite{Swe} then implies that the orbit map $G\to Y$, $g\mapsto gp$, is a quasi-isometric embedding.
\end{proof}

Proposition~\ref{prop:inj} implies, in particular, that if $G_1$ is a word-hyperbolic subgroup of a word-hyperbolic group $G_2$ and if the Cannon-Thurston map $i:\partial G_1\to\partial G_2$ exists and is injective then $G_1$ is quasiconvex in $G_2$. 

%%%%%%%%%%%%%%%%%
%%%%%%%%%%%%%%%%%
\section{Algebraic laminations}
%%%%%%%%%%%%%%%%%
%%%%%%%%%%%%%%%%%

If $G$ is a word-hyperbolic group, we denote $\partial^2 G:=\{(z,s)\in \partial G\times \partial G| z\ne s\}$.  The set $\partial^2G$ is equipped with the subspace topology from the product topology on $\partial G\times\partial G$. The group $G$ has a natural diagonal action on $\partial^2 G$: for $g\in G$ and $(z,s)\in\partial^2G$ we have $g(z,s):=(gz,gs)$.
Let $\partial G\times\partial G\to\partial G\times\partial G$ be the ``flip" map given by $j: (x,y)\mapsto (y,x)$ for $(x,y)\in \partial G$.

\begin{defn}[Algebraic lamination]
Let $G$ be a word-hyperbolic group. An \emph{algebraic lamination} on $G$ is a subset $L\subseteq \partial^2 G$ such that $L$ is closed in $\partial^2 G$, flip-invariant and $G$-invariant.
A pair $(x,y)\in L$ is called a \emph{leaf} of $L$. An element $x\in \partial G$ is called an \emph{end} of $L$ if there exists $y\in \partial G$, $y\ne x$ such that $(x,y)\in L$.

For an algebraic lamination $L$ on $G$ denote by $End(L)$ the set of all ends of $L$. Note that $End(L)$ is a $G$-invariant subset of $\partial G$.
\end{defn}

\begin{defn}[Lamination and relation associated to a Cannon-Thurston map] \label{Def:CT lam and rel}

Let $G$ be a word-hyperbolic group and let $Z$ be a compact metrizable space equipped with a convergence action of $G$ such that the Cannon-Thurston map $i: \partial G\to  Z$ exists.
Denote \[L_i:=\{(x,y)\in \partial G\times\partial G | i(x)=i(y), x\ne y\}.\]  Since $i$ is continuous and $G$-equivariant,  $L_i$ is a closed $G$-invariant and flip-invariant subset of $\partial^2 G$. Thus $L_i$ is an algebraic lamination on $G$.
\end{defn}

%%%%%%%%%%%%%%%%%%%%%%%
%%%%%%%%%%%%%%%%%%%%%%%
\section{Dynamical characterization}
%%%%%%%%%%%%%%%%%%%%%
%%%%%%%%%%%%%%%%%%%%%

Suppose $G$ is a word-hyperbolic group acting as a non-elementary convergence group on $Z$, and let $i \colon \partial G \to Z$ be a Cannon--Thurston map.  We say that a point $x \in \partial G$ is {\em asymptotic to $L_i$} if for {\em every} conical sequence $\{g_n\}_{n=1}^\infty$ for $x$ with pole pair $(x_-,x_+)$ we have $(x_-,x_+)\in L_i$, that is, $i(x_-) = i(x_+)$.

In this section, we prove the first theorem from the introduction.

\begin{theoremA} Suppose $G$ is word-hyperbolic and acts on the compact, metrizable space $Z$ as a non-elementary convergence group, and suppose $i \colon \partial G \to Z$ is a Cannon--Thurston map.  Let $z\in i(\partial G)$. Then:
\begin{enumerate}
\item The point $z \in Z$ is not a conical limit point for the action of $G$ on $Z$  if and only if some point $x \in i^{-1}(z)$ is asymptotic to $L_i$.  

\item If $|i^{-1}(z)| > 1$, then any $x \in i^{-1}(z)$ is asymptotic to $L_i$, and hence $z$ is non-conical.
\end{enumerate}
\end{theoremA}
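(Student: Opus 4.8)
The plan is to establish both parts using the convergence-group dictionary together with the $G$-equivariance and continuity of $i$, exploiting the fact that a conical sequence for a point of $Z$ has a pole pair of distinct points. I would organize things around the following observation: if $x \in \partial G$ and $(g_n)$ is a sequence of distinct elements of $G$ with $g_n x \to z_+$ in $\partial G$ and $g_n|_{\partial G \setminus \{x\}} \to c_{z_-}$ uniformly on compact sets (i.e. $(g_n)$ is a conical sequence for $x$ in $\partial G$ with pole pair $(z_-, z_+)$), then by continuity of $i$ and its $G$-equivariance we get $g_n i(x) = i(g_n x) \to i(z_+)$ and, for any $y \neq x$, $g_n i(y) = i(g_n y) \to i(z_-)$. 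Thus, whether or not $(g_n)$ is a conical sequence for $i(x)$ in $Z$ is governed exactly by whether $i(z_-) = i(z_+)$, i.e. whether $(z_-, z_+) \in L_i$. Combined with Proposition~\ref{prop:bow} (the characterization of conical limit points in $Z$ via existence of ``spreading'' sequences on all pairs of points), this is the engine for both statements.

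For part (1), the backward direction: suppose some $x \in i^{-1}(z)$ is asymptotic to $L_i$. To show $z$ is non-conical, I would argue by contradiction. If $z$ were conical in $Z$, by Proposition~\ref{prop:bow} there would be a sequence $(g_n)$ of distinct elements and distinct points $z_\infty \neq s_\infty$ in $Z$ with $g_n z \to z_\infty$ and $g_n s \to s_\infty$ for a well-chosen $s \neq z$; passing to a convergence subsequence for the $G$-action on $\partial G$ gives convergence subsequence data $(a, b, (g_{n_k}))$, and I would check (using minimality/perfectness of $\partial G$ and the fact that $i(\partial G)$ is infinite, as in the proof of Lemma~\ref{lem:subset}) that $(g_{n_k})$ is forced to be a conical sequence for some point $x' \in i^{-1}(z)$ with pole pair $(b, \text{attracting point})$ mapping under $i$ to $(s_\infty, z_\infty)$, distinct points — contradicting that this, or the analogous statement for $x$, forces a leaf of $L_i$. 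The delicate point is matching up the point of $\partial G$ over which the spreading occurs with a point of $i^{-1}(z)$; one needs $a = x'$ for some $x' \in i^{-1}(z)$, which follows because $g_{n_k} i(z') \to s_\infty$ for all $z'$ with $i(z') \neq$ the relevant exceptional value, forcing $i$ of the pole to be $z_\infty = i(z)$. Conversely, for the forward direction of part~(1): if $z$ is non-conical, I want to produce $x \in i^{-1}(z)$ asymptotic to $L_i$. Given any conical sequence $(g_n)$ for any $x \in i^{-1}(z)$ with pole pair $(x_-, x_+)$, the equivariance computation above shows $g_n i(x) \to i(x_+)$ and $g_n$ spreads $\partial G \setminus \{x\}$ to $i(x_-)$; if $i(x_-) \neq i(x_+)$ this would exhibit $z = i(x)$ as conical via Proposition~\ref{prop:bow} (here one must also handle points $s \in Z \setminus i(\partial G)$, using that $i(\partial G)$ is the limit set and non-conicality can be tested within the limit set by Lemma~\ref{lem:subset}). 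Hence $i(x_-) = i(x_+)$, i.e. $(x_-, x_+) \in L_i$, so $x$ is asymptotic to $L_i$ — and since $z$ is non-conical, such $x$ exists (any preimage works, provided $i^{-1}(z)$ contains a point admitting a conical sequence; if $x$ admits no conical sequence in $\partial G$ at all this is impossible since every point of $\partial G$ is conical for the $G$-action on $\partial G$).

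For part (2), suppose $|i^{-1}(z)| > 1$, say $x \neq x'$ with $i(x) = i(x') = z$. Let $(g_n)$ be any conical sequence for $x$ in $\partial G$ with pole pair $(x_-, x_+)$; since every point of $\partial G$ is a conical limit point for the $G$-action on $\partial G$, such a sequence exists for every $x$. By the equivariance computation, $g_n i(x') \to i(x_-)$ because $x' \neq x$, while $g_n i(x) = i(g_n x) \to i(x_+)$; but $i(x) = i(x') = z$, so $g_n z$ converges to both $i(x_-)$ and $i(x_+)$, forcing $i(x_-) = i(x_+)$, i.e. $(x_-, x_+) \in L_i$. Since $(g_n)$ was an arbitrary conical sequence for $x$, the point $x$ is asymptotic to $L_i$; as $x \in i^{-1}(z)$ was arbitrary, every point of $i^{-1}(z)$ is asymptotic to $L_i$. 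Then part~(1) immediately gives that $z$ is non-conical.

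I expect the main obstacle to be the forward direction of part~(1): carefully extracting, from an arbitrary spreading sequence witnessing conicality of $z$ in $Z$ (via Proposition~\ref{prop:bow}), a genuine conical sequence in $\partial G$ for a specific preimage $x \in i^{-1}(z)$, with the correct identification of the pole pair. This requires the bookkeeping of passing to subsequences and using that $i(\partial G)$ is infinite (perfect) to choose auxiliary points $y$ with $i(y)$ avoiding prescribed exceptional values, exactly as in the proof of Lemma~\ref{lem:subset}; the rest is a direct transport of convergence through $i$.
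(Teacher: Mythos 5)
Your part (2) is correct and is essentially the paper's argument (transport a conical sequence for $x$ through $i$ using equivariance and continuity, and compare the two limits of $g_nz$), and your treatment of the implication ``some $x\in i^{-1}(z)$ is not asymptotic to $L_i$ $\Rightarrow$ $z$ is conical'' also works: reducing to $Z=i(\partial G)$ via Lemma~\ref{lem:subset} and then verifying the criterion of Proposition~\ref{prop:bow} with the single sequence $(g_n)$ is a legitimate minor variant of the paper, which instead extracts a convergence subsequence for the action on $Z$ and identifies its exceptional point with $i(x)$.

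The genuine gap is in the opposite direction of (1), ``some $x\in i^{-1}(z)$ asymptotic $\Rightarrow$ $z$ non-conical''. You assume $z$ conical and start from Proposition~\ref{prop:bow}, but for a single auxiliary $s$ that proposition only gives $g_nz\to z_\infty$ and $g_ns\to s_\infty$ with $z_\infty\ne s_\infty$; this pointwise information is too weak to force the exceptional point $a$ of the convergence subsequence on $\partial G$ to lie in $i^{-1}(z)$. From $g_{n_k}|_{\partial G\setminus\{a\}}\to c_b$ one only learns that $z$ and $s$ cannot both have $i$-preimages different from $a$; nothing rules out that $i^{-1}(s)=\{a\}$ while $z$ has preimages away from $a$, in which case your construction yields a conical sequence for a preimage of $s$, not of $z$, and no contradiction results. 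The paper avoids this (Lemma~\ref{L:conical from conical}) by using the full \emph{definition} of a conical limit point for $z$, which is available since $z$ is assumed conical: locally uniform convergence of $g_n$ on $Z\setminus\{z\}$ to $z_-$ together with $g_nz\to z_+\ne z_-$. Evaluating at points $i(y)$ with $i(y)\ne z$ then forces $i(b)=z_-$, and since $g_{n_k}z\to z_+\ne z_-$ it forces every $y\ne a$ to satisfy $i(y)\ne z$, i.e.\ $i^{-1}(z)=\{a\}$.

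This last point also closes the second weakness in your sketch: the non-asymptotic preimage $x'$ you produce is a priori only \emph{some} point of $i^{-1}(z)$, while your hypothesis concerns a given $x$, and your concluding phrase (``this, or the analogous statement for $x$, forces a leaf of $L_i$'') does not actually produce a contradiction when $x'\ne x$. In the paper's argument there is no mismatch because the computation shows $i^{-1}(z)$ is a singleton whenever $z$ is conical, so $x'=x$ automatically; alternatively, when $|i^{-1}(z)|>1$ one may invoke your independently proved part (2). As written, this direction of (1) needs to be redone along the lines of Lemma~\ref{L:conical from conical}, replacing the appeal to Proposition~\ref{prop:bow} by an actual conical sequence for $z$.
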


In the rest of this section, we make the assumptions of the theorem.  Our first lemma shows that, up to subsequences, conical sequences in $G$ for $Z$ must come from conical sequences for $\partial G$.
\begin{lem} \label{L:conical from conical}
Suppose that $z\in Z$ is a conical limit point and that $\gnn$ is a conical sequence for $z$.  Then there exists $x \in i^{-1}(z)$ and a subsequence $\gnkk$ which is a conical sequence for $x$.  Moreover, if $(x_-,x_+)$ is the pole pair for $\gnk$ and $x$, then $(i(x_-),i(x_+))$ is the pole pair for $\gnk$ and $z$, and in particular, $i(x_-) \neq i(x_+)$.
\end{lem}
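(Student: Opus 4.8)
The plan is to start from the conical sequence $\gnn$ for $z$ in $Z$, with pole pair $(z_-, z_+)$, and use the convergence action of $G$ on the (compact metrizable) space $\partial G$ to extract a subsequence with good dynamics upstairs. First I would apply the convergence property of the $G$-action on $\partial G$ to the sequence $\gnn$: after passing to a subsequence $\gnkk$, there are points $a, b \in \partial G$ such that $g_{n_k}|_{\partial G \setminus \{a\}}$ converges locally uniformly to the constant map $c_b$. The candidates for $x$ will be $a$ (the ``pole'' where the maps blow up) and for the pole pair $(x_-, x_+) = (b, a)$; I need to verify that $x := a \in i^{-1}(z)$, that $x_+ := a \neq x_- := b$, and that $(i(b), i(a)) = (z_-, z_+)$.

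The key steps, in order: (1) Use $G$-equivariance and continuity of $i$ to transport the convergence on $\partial G$ to convergence on $i(\partial G) \subseteq Z$. Concretely, since $i(\partial G)$ is infinite (the action of $G$ on $i(\partial G)$ is non-elementary, as it is the limit set of a non-elementary convergence action; cf.\ the argument in Lemma \ref{L:CT fixed points}), there is a point $y \in \partial G$ with $y \neq a$ and $i(y) \neq i(a)$; then $\lim_k g_{n_k} i(y) = i(b)$ by continuity of $i$, while the downstairs conical data forces $\lim_k g_{n_k} w = z_-$ for all $w \neq z$. Comparing these at $w = i(y)$ — which is legitimate provided $i(y) \neq z$ — gives $i(b) = z_-$. (2) Show $z = i(a)$: if instead $i(a) \neq z$, then $z \in Z \setminus \{\text{the downstairs pole}\}$... more carefully, I would argue that $i(a)$ must be the downstairs pole of the (sub)sequence, so that $\lim_k g_{n_k} z = z_+$ (from the conical data for $z$, applied to any point, or directly since $z = \lim g_{n_k} z$) is consistent only with $i(a) = z$; the point is that if $i(a) \neq z$ then $z$ lies in the domain where $g_{n_k}$ converges to the constant $z_-$, forcing $z_- = z_+$, a contradiction with $(z_-,z_+)$ being a pole pair. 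This is essentially the same trick as in the proof of Lemma \ref{lem:subset}. (3) Show $a \neq b$: if $a = b$ then $i(a) = i(b)$, but we have just shown $i(a) = z = z_+$ and $i(b) = z_-$, contradicting $z_- \neq z_+$. (4) Now that $a \neq b$, the convergence data $(a, b, \gnk)$ on $\partial G$ exactly says $\gnk$ is a conical sequence for $x = a$ with pole pair $(x_-, x_+) = (b, a)$, and we have established $i(x_-) = i(b) = z_-$ and $i(x_+) = i(a) = z = z_+$, so $(i(x_-), i(x_+)) = (z_-, z_+)$ is the pole pair for $\gnk$ and $z$, and in particular $i(x_-) \neq i(x_+)$.

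The main obstacle I anticipate is the bookkeeping in step (2): making sure that when I "evaluate" the downstairs conical convergence of $g_{n_k}$ at a test point I have chosen a test point genuinely different from the downstairs pole $z$, and simultaneously different enough upstairs (i.e., its $i$-preimage situation is controlled). The clean way around this is to choose $y \in \partial G$ generically — using that $i(\partial G)$ is infinite and that $i^{-1}(z)$, being the $i$-preimage of a single point, is a proper closed subset, so one can pick $y$ with $y \neq a$, $i(y) \neq i(a)$, and $i(y) \neq z$ all at once — and then run the comparison of the two limits of $g_{n_k} i(y)$. Once the test point is chosen correctly, everything else is a direct application of continuity/equivariance of $i$ and the definitions of convergence subsequence data and pole pair, with no hard estimates.
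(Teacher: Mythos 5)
Your overall skeleton --- extract convergence subsequence data $(a,b,(g_{n_k})_k)$ for the action on $\partial G$, identify $i(b)=z_-$ via a well-chosen test point, and show $i(a)=z$ by the trick of Lemma~\ref{lem:subset} --- is the same as the paper's. But there is a genuine error in how you identify the second pole. You declare the pole pair for $x=a$ to be $(x_-,x_+)=(b,a)$ and correspondingly claim $i(x_+)=i(a)=z=z_+$. By Definition~\ref{Def:conical}, $x_+$ must be $\lim_k g_{n_k}(a)$, and there is no reason this limit equals $a$; likewise $z_+=\lim_n g_n z$ need not equal $z$. For a concrete counterexample take $Z=\partial G$, $i=\mathrm{id}$, $z=g^{+\infty}$ for a loxodromic $g$, and the conical sequence $g_n=hg^{-n}$ with $h$ not fixing $g^{+\infty}$: then $z_+=hg^{+\infty}\ne z$ and $\lim_n g_n(a)=hg^{+\infty}\ne a$. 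Because of this, your step (3) proves the wrong inequality: what is needed for $(g_{n_k})$ to be a conical sequence for $a$ is $\lim_k g_{n_k}(a)\ne b$, not $a\ne b$, and your argument for $a\ne b$ rests on the false identity $z=z_+$. The repair is exactly the paper's route: pass to a further subsequence so that $g_{n_k}(a)\to x_+$ for some $x_+\in\partial G$ (a priori possibly equal to $b$), then compute $i(x_+)=\lim_k g_{n_k}(i(a))=\lim_k g_{n_k}(z)=z_+$ using continuity, equivariance and $i(a)=z$; since $z_+\ne z_-=i(b)=i(x_-)$, it follows that $x_+\ne x_-$, so $(g_{n_k})$ is conical for $a$ with pole pair $(x_-,x_+)$ and $(i(x_-),i(x_+))=(z_-,z_+)$, which is the full ``moreover'' statement.

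A secondary gap: the conclusion ``there exists $x\in i^{-1}(z)$'' presupposes $z\in i(\partial G)$, and your step (2) implicitly uses this (without it, your contradiction argument only shows $z\notin i(\partial G\setminus\{a\})$, which is compatible with $i^{-1}(z)=\varnothing$). The paper handles this by first assuming, without loss of generality via Lemma~\ref{lem:subset}, that the action on $Z$ is minimal, so that $i(\partial G)=Z$; you should include such a reduction (or note that the lemma is only invoked for $z\in i(\partial G)$, as in Theorem~\ref{thm:A}).
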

\begin{proof}
Without loss of generality, we may assume that the action of $G$ on $Z$ is minimal, so $i(\partial G) = Z$.  Let $(z_-,z_+)$ be the pole pair for $z$ and $\gn$.  This is also a pole pair for any subsequence of $\gn$.

By the convergence property, there exists subsequence data $(x,x_-,g_{n_k})$ such that $(g_{n_k}|_{\partial G \setminus \{x \}} )_{k \geq 1})$ converges locally uniformly to the constant map to $x_-$.  By passing to a further subsequence, we may assume that $\displaystyle{\lim_{k \to \infty} g_{n_k}(x) = x_+}$, for some $x_+ \in \partial G$ (possibly equal to $x_-$).

Since $i$ is continuous, it follows that for any $y \in \partial G$, we have
\[ i(\lim_{k \to \infty} g_{n_k}(y)) = \lim_{k \to \infty} g_{n_k}(i(y)).\]
From this, we see that if $y \in \partial G \setminus ( i^{-1}(z) \cup \{x\})$ then $\displaystyle{i(x_-) =i(\lim_{k \to \infty} g_{n_k}(y))= z_-}$.
Furthermore, since any $y \in \partial G \setminus \{x\}$ has $\displaystyle{\lim_{k \to \infty} g_{n_k}(i(y)) = i(x_-) = z_-}$, it follows that $i(\partial G \setminus \{x \}) \subset Z \setminus \{z\}$; that is, $i(x) = z$.  Finally, we have
\[ i(x_+) = i(\lim_{k \to \infty} g_{n_k}(x)) = \lim_{k \to \infty} g_{n_k}(z) = z_+.\]
Therefore $i(x_+) = z_+ \neq z_- = i(x_-)$, and so $x_+ \neq x_-$ and $\{g_{n_k}\}$ is a conical sequence for $x$ with pole pair $(x_-,x_+)$.
\end{proof}

\begin{proof}[Proof of Theorem~\ref{thm:A}]
To prove part (1), first, suppose that $z \in Z$ is conical.  Let $\gnn$ be a conical sequence for $z$ with pole pair $(z_-,z_+)$.  According to Lemma \ref{L:conical from conical}, there exist $x \in i^{-1}(z)$ and a subsequence $\gnkk$ which is a conical sequence for $x$ with pole pair $(x_-,x_+)$.  Since $i(x_-) = z_- \neq z_+ = i(x_+)$, it follows that $x$ is not asymptotic to $L_i$.

Now suppose $x$ is not asymptotic to $L_i$ and let $\{g_n\}_{n=1}^{\infty}$  be any conical sequence for $x$ with pole pair $(x_-,x_+)$ such that $i(x_-) \neq i(x_+)$.  Because the action of $G$ on $Z$ is a convergence action, there exist a subsequence $\gnkk$ and $z,z_- \in Z$ such that on $Z \setminus \{z\}$, $g_{n_k}$ converges locally uniformly to $z_-$.  For any $y \in \partial G \setminus \{x\}$ we have $\displaystyle{\lim_{k \to \infty} g_{n_k}(i(y)) = i(x_-)}$.  Thus taking $y \not \in i^{-1}(z)$, this implies $i(x_-) = z_-$.  On the other hand, $\displaystyle{\lim_{k \to \infty} i(g_{n_k}(x)) = i(x_+) \neq i(x_-) = z_-}$ by assumption.  It follows that $i(x) = z$ (since anything else must converge to $z_-$ on applying $g_{n_k}$). Therefore, setting $z_+ = i(x_+)$, it follows that $\{g_{n_k}\}$ is a conical sequence for $z$ with pole pair $(z_-,z_+)$, and $z$ is a conical limit point. Thus part (1) of Theorem~\ref{thm:A} is proved. 

For part (2) of Theorem~\ref{thm:A} we suppose $|i^{-1}(z)| >1$, and prove that any $x \in i^{-1}(z)$ is asymptotic to $L_i$.   For this, let $y \in i^{-1}(z)$ be any other point with $y \neq x$.  Let $\gnn$ be a conical sequence for $x$ with pole pair $(x_-,x_+)$.  Then $\limn g_n(x) = x_+$ and $\limn g_n(y) = x_-$.    Since $L_i$ is $G$--invariant, $i(g_n(x)) = i(g_n(y))$ and since $L_i$ is closed (or equivalently, the algebraic lamination $L_i$ is closed), it follows that $i(x_-) = i(x_+)$.  Since $\gn$ was an arbitrary conical sequence for $x$, the point $x$ is asymptotic to $L_i$, as required. Hence, by part (1), $z$ is not a conical limit point for the action of $G$ on $Z$. 
\end{proof}

%%%%%%%%%%%%%%%%
%%%%%%%%%%%%%%%%
\section{Geometric characterization}\label{sect:geom}
%%%%%%%%%%%%%%%%
%%%%%%%%%%%%%%%%

\begin{defn}[Coarse leaf segments]
Let $G$ be a word-hyperbolic group and let $L\subseteq \partial^2 G$ be an algebraic lamination on $G$.

Let $X$ be a $\delta$-hyperbolic (where $\delta\ge 1$) proper geodesic metric space equipped with a properly discontinuous cocompact isometric action of $G$, so that $\partial G$ is naturally identified with $\partial X$.

For an algebraic lamination $L$ on $G$, a geodesic segment $\tau=[a,b]$ in $X$ is called a \emph{coarse $X$-leaf segment of $L$} if there exist a pair $(x,y)\in L$ and a bi-infinite geodesic $\gamma$ from $x$ to $y$ in $X$ such that $\tau$ is contained in the $2\delta$-neighborhood of $\gamma$.
\end{defn}

If $C\ge 0$, for a geodesic segment $\tau=[a,b]$ of length $\ge 2C$, the \emph{$C$-truncation} of $\tau$ is defined as $[a',b']\subseteq [a,b]$ where $a',b'\in [a,b]$ are such that $d(a,a')=d(b,b')=C$.

\iffalse
\begin{thm}\label{thm:B} Let $G$ be a word-hyperbolic group and let $Z$ be a compact metrizable space equipped with a non-elementary convergence action of $G$ such that the Cannon-Thurston map $i: \partial G\to  Z$ exists and such that $i$ is not injective. Let $X$ be a $\delta$-hyperbolic (where $\delta\ge 1$) proper geodesic metric space equipped with a properly discontinuous cocompact isometric action of $G$ (so that $\partial G$ is naturally identified with $\partial X$).

Let $x\in \partial G$, let $z=i(x)\in Z$ and let $\rho$ be a geodesic ray in $X$ limiting to $x$.

Then the following are equivalent:
\begin{enumerate}
\item The point $z$ is a  conical limit point for the action of $G$ on $Z$.
\item There exist a geodesic segment $\tau=[a,b]$ in $X$ of length $\ge 100\delta$ and an infinite sequence of distinct elements $g_n\in G$ such that the $20\delta$-truncation $\tau'$ of $\tau$ is not a coarse $X$-leaf segment of $L_i$ and such that 
for each $n\ge 1$ the segment $g_n\tau$ is contained in a $6\delta$-neighborhood of $\rho$. [Note that this condition automatically implies that $\lim_{n\to\infty} g_na=\lim_{n\to\infty} g_nb=x$.]
\end{enumerate}
\end{thm}
\fi

\begin{theoremB} Let $G$ be a word-hyperbolic group and let $Z$ be a compact metrizable space equipped with a non-elementary convergence action of $G$ such that the Cannon-Thurston map $i: \partial G\to  Z$ exists and such that $i$ is not injective. Let $X$ be a $\delta$-hyperbolic (where $\delta\ge 1$) proper geodesic metric space equipped with a properly discontinuous cocompact isometric action of $G$ (so that $\partial G$ is naturally identified with $\partial X$).

Let $x\in \partial G$, let $z=i(x)\in Z$ and let $\rho$ be a geodesic ray in $X$ limiting to $x$.

Then the following are equivalent:
\begin{enumerate}
\item The point $z$ is a  conical limit point for the action of $G$ on $Z$.
\item There exist a geodesic segment $\tau=[a,b]$ in $X$ of length $\ge 100\delta$ and an infinite sequence of distinct elements $g_n\in G$ such that the $20\delta$-truncation $\tau'$ of $\tau$ is not a coarse $X$-leaf segment of $L_i$ and such that 
for each $n\ge 1$ the segment $g_n\tau$ is contained in a $6\delta$-neighborhood of $\rho$. [Note that this condition automatically implies that $\lim_{n\to\infty} g_na=\lim_{n\to\infty} g_nb=x$.]
\end{enumerate}
\end{theoremB}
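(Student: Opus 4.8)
The plan is to obtain the equivalence from the dynamical characterization of Theorem~\ref{thm:A}, by transcribing the condition ``$x$ is asymptotic to $L_i$'' into the geometry of $X$. Fix the basepoint $p=\rho(0)\in X$; since $\partial G$ is identified with $\partial X$, the lamination $L_i$ lives in $\partial^2 X$ and conical sequences for boundary points refer to the action of $G$ on $\partial X$. By Theorem~\ref{thm:A} (and its proof), for our fixed point $x$ the point $z=i(x)$ is a conical limit point if and only if $x$ is not asymptotic to $L_i$, that is, if and only if there is a conical sequence $(g_n)$ for $x$ whose pole pair $(x_-,x_+)$ satisfies $i(x_-)\ne i(x_+)$, equivalently $(x_-,x_+)\notin L_i$. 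So it suffices to show that (2) is equivalent to the existence of such a conical sequence for $x$. We use two standard inputs. \textbf{(I)} If $(g_n)$ is a conical sequence for $x$ with pole pair $(x_-,x_+)$ then $g_n p\to x_-$ and $g_n x\to x_+$ in $X\cup\partial X$; conversely, if $g_n^{-1}p\to x$ and, along a subsequence, $g_n p\to u$ and $g_n x\to v$ with $u\ne v$, then along that subsequence $(g_n)$ is a conical sequence for $x$ with pole pair $(u,v)$. Both follow from the convergence-group axioms together with the fact that, for the geometric action on $X$, the relation $h_n|_{\partial X\setminus a}\to c_b$ (uniformly on compacta) forces $h_n p\to b$ in $X\cup\partial X$. \textbf{(II)} If $(u,v)\in\partial^2 X$ is \emph{not} a leaf of $L_i$ and $\gamma$ is a bi-infinite geodesic from $u$ to $v$, then there is $L_0>0$ such that no subsegment of $\gamma$ of length $\ge L_0$ centered at $\gamma(0)$ is a coarse $X$-leaf segment of $L_i$; for otherwise there are leaf geodesics $\gamma_n$ of $L_i$ whose $2\delta$-neighborhoods contain ever longer central subsegments of $\gamma$, and by properness a subsequence of the $\gamma_n$ converges, uniformly on compact sets, to a bi-infinite geodesic $\gamma_\infty$ with $\gamma\subseteq N_{2\delta}(\gamma_\infty)$ and, since $L_i$ is closed, with endpoints forming a leaf of $L_i$; but two bi-infinite geodesics at finite Hausdorff distance have the same endpoint pair, so $(u,v)\in L_i$, a contradiction.

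\textbf{Proof of (1)$\Rightarrow$(2).} Assume $z$ is conical and pick a conical sequence $(g_n)$ for $x$ with pole pair $(x_-,x_+)\notin L_i$; let $\ell$ be a bi-infinite geodesic from $x_-$ to $x_+$. Apply (II) to get $L_0$ for $(x_-,x_+)$, put $T:=\max(100\delta,\,L_0+40\delta)$, and let $\tau\subseteq\ell$ be the subsegment of length $T$ centered at $\ell(0)$. Then $\tau$ has length $\ge 100\delta$ and its $20\delta$-truncation $\tau'$ is the central subsegment of $\ell$ of length $T-40\delta\ge L_0$, so $\tau'$ is not a coarse $X$-leaf segment of $L_i$. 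By (I), $g_n p\to x_-$ and $g_n x\to x_+$ with $x_-\ne x_+$, hence the geodesics $g_n\rho$ converge uniformly on compact sets to $\ell$; since $\tau$ is a fixed compact subsegment of $\ell$, it follows that $\tau\subseteq N_{6\delta}(g_n\rho)$ for all large $n$, and therefore $g_n^{-1}\tau\subseteq N_{6\delta}(\rho)$. Discarding the finitely many exceptional indices and setting $h_n:=g_n^{-1}$ (distinct elements), the pair $(\tau,(h_n))$ satisfies condition (2); the relation $h_na=h_nb\to x$ is automatic, since $h_na\in h_n\tau\subseteq N_{6\delta}(\rho)$ and the $h_n$ are distinct, so proper discontinuity gives $d(h_na,p)\to\infty$ and hence $h_na\to\rho(\infty)=x$.

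\textbf{Proof of (2)$\Rightarrow$(1).} Let $\tau=[a,b]$ and $(g_n)$ be as in (2); as above, $g_na,g_nb\to x$. Applying $g_n^{-1}$ to $g_n\tau\subseteq N_{6\delta}(\rho)$ gives $\tau\subseteq N_{6\delta}(g_n^{-1}\rho)$, so the ray $g_n^{-1}\rho$ passes within $6\delta$ of $a$ and of $b$; since its point closest to $a$ has distance $\to\infty$ from $g_n^{-1}p$ (proper discontinuity again), $g_n^{-1}p$ escapes to $\partial X$, and after passing to a subsequence $g_n^{-1}p\to w_-$ and $g_n^{-1}x\to w_+$. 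If $w_-=w_+$, the geodesics $g_n^{-1}\rho$ would recede from $a$, contradicting $a\in N_{6\delta}(g_n^{-1}\rho)$; so $w_-\ne w_+$, and $g_n^{-1}\rho$ converges uniformly on compact sets to a bi-infinite geodesic $\gamma_\infty$ from $w_-$ to $w_+$, with $\tau\subseteq N_{6\delta}(\gamma_\infty)$. A thin-quadrilateral estimate, using $20\delta>6\delta+2\delta$, then gives $\tau'\subseteq N_{2\delta}(\gamma_\infty)$. Were $(w_-,w_+)$ a leaf of $L_i$, this would exhibit $\tau'$ as a coarse $X$-leaf segment of $L_i$, contrary to (2); hence $(w_-,w_+)\notin L_i$. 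Finally, by cocompactness choose $c_0\in G$ with $d(c_0p,a)$ bounded; then $g_nc_0p\to x$, because $g_nc_0p$ stays within a bounded distance of $g_na\to x$. Put $\gamma_n:=(g_nc_0)^{-1}$, so that $\gamma_n^{-1}p=g_nc_0p\to x$, while $\gamma_np=c_0^{-1}g_n^{-1}p\to c_0^{-1}w_-$ and $\gamma_nx=c_0^{-1}g_n^{-1}x\to c_0^{-1}w_+$, with $c_0^{-1}w_-\ne c_0^{-1}w_+$. By (I), a subsequence of $(\gamma_n)$ is a conical sequence for $x$ with pole pair $c_0^{-1}\cdot(w_-,w_+)$; and this pole pair is not in $L_i$, since $L_i$ is $G$-invariant and $(w_-,w_+)\notin L_i$. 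Thus $x$ is not asymptotic to $L_i$, and so $z=i(x)$ is conical; indeed the proof of Theorem~\ref{thm:A}(1) shows that $i(x')$ is a conical limit point whenever $x'\in\partial G$ is not asymptotic to $L_i$.

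\textbf{Expected main obstacle.} The conceptual heart is input (II): converting the closedness of the algebraic lamination $L_i$ into a uniform length scale below which a long geodesic segment that merely tracks a non-leaf cannot be a coarse leaf segment. The remaining work is careful bookkeeping of the hyperbolicity constants in the thin-quadrilateral estimate and in the limits of geodesics in $X$; the particular constants $6\delta$, $20\delta$, $100\delta$ in the statement are precisely those that make those estimates close up.
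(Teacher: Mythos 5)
Your proof is correct (with the standard hyperbolic-space and convergence-group facts you invoke cited at roughly the same level of detail as the paper itself), but it takes a genuinely different route. The paper proves both implications directly from Bowditch's two-point criterion (Proposition~\ref{prop:bow}) for conical points of the action on $Z$: for (1)$\Rightarrow$(2) it uses the non-injectivity hypothesis to choose an endpoint $y$ of a leaf of $L_i$ with $i(y)\ne z$, works along the geodesic from $y$ to $x$, and resolves a dichotomy (the translated subsegments accumulate either at $x$ or at $y$) using closedness of $L_i$; for (2)$\Rightarrow$(1) it verifies the criterion for every $s\ne z$ by pulling the geodesic from a preimage of $s$ to $x$ back by $g_n^{-1}$. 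You instead reduce the whole statement to Theorem~\ref{thm:A} --- $z$ is conical iff the fixed preimage $x$ is not asymptotic to $L_i$, which indeed follows from parts (1) and (2) of that theorem as you observe --- and then translate ``there is a conical sequence for $x$ with pole pair off $L_i$'' into condition (2) via the orbit dictionary ($g_np\to x_-$, $g_n^{-1}p\to x$), your compactness lemma (II) on coarse leaf segments, and the thin-quadrilateral truncation estimate. Two noteworthy differences: your (1)$\Rightarrow$(2) never uses non-injectivity of $i$, and your (2)$\Rightarrow$(1) bypasses Proposition~\ref{prop:bow} entirely, at the cost of the cocompactness trick with $c_0$ needed before applying your input (I). Your lemma (II) and the quadrilateral estimate are precisely the steps the paper leaves implicit (the existence of the index $m$ in its (1)$\Rightarrow$(2), and the passage from the $8\delta$-neighborhood of $g_n^{-1}\gamma$ to the $2\delta$-neighborhood of $\gamma_\infty$ in its (2)$\Rightarrow$(1)), so your write-up is if anything more explicit there. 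The only imprecision is the claim that $g_n\rho$ converges on compacta to $\ell$ itself: the limit (along a subsequence) is some geodesic with endpoints $x_-,x_+$, only within Hausdorff distance $2\delta$ of $\ell$; the $6\delta$ slack absorbs this, so the conclusion stands.
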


\begin{proof}
Suppose first that (1) holds and that $z$ is a conical limit point for the action of $G$ on $Z$.   Since by assumption $i$ is not injective, there exists a pair $(y',y)\in L_i$ such that $i(y)=i(y')$.  
Denote $s=i(y)=i(y')$.
By translating by an element of $g$ if necessary, we may also assume that $s\ne z$.

Since $i(x)=z$ and $z\ne s$, we have $x\ne y$. Note that $y\in End(L_i)$.

Consider a geodesic $\gamma$ from $y$ to $x$ in $X$.  Since $z$ is conical, by Proposition~\ref{prop:bow} there exists an infinite sequence of distinct elements $h_n\in G$ such that $\lim_{n\to\infty}h_n(s,z)=(s_\infty,z_\infty)$ for some $s_\infty,z_\infty\in Z$ such that $s_\infty\ne z_\infty$.  After passing to a further subsequence, we may assume that $\lim_{n\to\infty} h_n x=x_\infty$ and $\lim_{n\to\infty} h_n y=y_\infty$ for some $x_\infty, y_\infty\in \partial G=\partial X$. By continuity of $i$ we have $i(x_\infty)=z_\infty$ and $i(y_\infty)=s_\infty$. In particular, this means that $x_\infty\ne y_\infty$ and that $\lim_{n\to\infty} h_n(y,x)=(y_\infty,x_\infty)$ in $\partial^2 G$.  Let $\gamma_\infty$ be a geodesic in $X$ from $y_\infty$ to $x_\infty$.

 Then there exists a sequence of finite subsegments $\tau_n=[q_n, r_n]$ of $\gamma$ and a sequence of subsegments $[a_n,b_n]$ of $\gamma_\infty$ with the following properties:

\renewcommand{\labelenumi}{\alph{enumi})}
\begin{enumerate}
\item We have $\lim_{n\to\infty} a_n=y_\infty$,  $\lim_{n\to\infty} b_n=x_\infty$ and $[a_n, b_n]$ is a subsegment of $[a_{n+1}, b_{n+1}]$.
\item We have either $\lim_{n\to\infty} q_n=\lim_{n\to\infty} r_n=x$ or $\lim_{n\to\infty} q_n=\lim_{n\to\infty} r_n=y$;
\item For all $n\ge 1$ the paths $h_n[q_n,r_n]$ and $[a_n,b_n]$ are $4\delta$-close.
\item We have $h_nq_n\to_{n\to\infty} y_\infty$ and $h_nr_n\to_{n\to\infty} x_\infty$.
\end{enumerate}

If  $\lim_{n\to\infty} q_n=\lim_{n\to\infty} r_n=y$ then, since $y\in End(L_i)$ and since $L_i\subseteq \partial^2 G$ is closed, it follows that $(y_\infty,x_\infty)\in L_i$. %[Probably needs to be elaborated]  
Therefore  $z_\infty=i(x_\infty)=i(y_\infty)=s_\infty$, which contradicts the fact that $s_\infty\ne p_\infty$. 
Therefore $\lim_{n\to\infty} q_n=\lim_{n\to\infty} r_n=x$. Since $s_\infty\ne z_\infty$,  it follows that $(y_\infty,x_\infty)\not\in L_i$.   Then
there exists $m\ge 1$ such that $d(a_m,b_m)\ge 100\delta$ and such that for $\tau:=[a_m,b_m]$  the $20\delta$-truncation $\tau'=[a_m',b_m']\subseteq \gamma_\infty$ of $\tau$ is not a coarse $X$-leaf segment of $L_i$. 
By construction, for every $n\ge m$, $h_n^{-1}\tau$ is contained in a $4\delta$-neighborhood of $[q_n,r_n]$ and hence, for all sufficiently large $n$,  in a $6\delta$-neighborhood of $\rho$. Thus we have verified that (1) implies (2).

Suppose now that (2) holds and that there exist a geodesic segment $\tau=[a,b]$ in $X$ of length $\ge 100\delta$ and an infinite sequence of distinct elements $g_n\in G$ such that the $10\delta$-truncation $\tau'=[a',b']$ of $\tau$  is not an $X$-leaf segment of $L_i$ and such that 
for each $n\ge 1$ the segment $g_n\tau$ is contained in a $6\delta$-neighborhood of $\rho$. 
We claim that $z$ is a conical limit point for the action of $G$ on $Z$. In view of Lemma~\ref{lem:subset},  we may assume that $i(\partial G)=Z$.

Indeed, let $s\in Z$ be arbitrary  such that $s\ne z$. Recall that $i(x)=z$. Choose $y\in \partial G$ such that $i(y)=s$. Thus $x\ne y$.
Consider the bi-infinite geodesic $\gamma$ from $y$ to $x$ in $X$. Recall that $\rho$ is a geodesic ray in $X$ limiting to $x$.

After chopping-off a finite initial segment of $\rho$ if necessary, we may assume that there is a point $w\in \gamma$ such that the ray $\rho'$  from $w$ to $x$ contained in $\gamma$ is $2\delta$-close to $\rho$. By assumption, for every $n\ge 1$ the geodesic $g_n^{-1} \gamma$ from $g_n^{-1}y$ to $g_n^{-1}x$ contains a subsegment which is $8\delta$-close to $\tau$.  By compactness, after  passing to a further subsequence, we may assume that $\lim_{n\to\infty} g_n^{-1} y=y_\infty$ and $\lim_{n\to\infty} g_n^{-1} x=x_\infty$ for some distinct points $x_\infty, y_\infty\in \partial G$.
Let $\gamma_\infty$ be a geodesic from $y_\infty$ to $x_\infty$ in $X$. 

We have $\tau'=[a',b']\subseteq [a,b]=\tau$ with $d(a,a')=d(b,b')=20\delta$.  Since $\tau$ is contained in the $8\delta$-neighborhood of  $g_n^{-1} \gamma$, the segment $\tau'$ is contained in a $2\delta$-neighborhood of $\gamma_\infty$, and $\tau'$ has length $\ge 50\delta$. Since by assumption $\tau'$ is not a coarse $X$-leaf segment of $L_i$, it follows that $(y_\infty,x_\infty)\not\in L_i$ and hence $i(x_\infty)\ne i(y_\infty)$.  Denote $z_\infty=i(x_\infty)$ and $s_\infty=i(y_\infty)$. Since $i(x)=z$, $i(y)=s$ and since $\lim_{n\to\infty} g_n^{-1} y=y_\infty$ and $\lim_{n\to\infty} g_n^{-1} x=x_\infty$,  the continuity of $i$ implies that $\lim_{n\to\infty} g_n^{-1}(s,z)=(s_\infty,z_\infty)$. Since $s_\infty\ne z_\infty$, Proposition~\ref{prop:bow} implies that $z$ is indeed a conical limit point for the action of $G$ on $Z$, as required.

\end{proof}

%%%%%%%%%%%%%%%%%
%%%%%%%%%%%%%%%%%%%%%
\section{Injective, non-conical limit points}
%%%%%%%%%%%%%%%%%%%%%
%%%%%%%%%%%%%%%%%

Here we prove that injective non-conical limit points occur quite often.

\begin{theoremC}
Suppose a hyperbolic group $G$ acts on a compact metrizable space $Z$ as a non-elementary convergence group without accidental parabolics, and suppose that there exists a non-injective Cannon--Thurston map $i \colon \partial G \to Z$.  Then there exists a non-conical limit point $z \in Z$ with $|i^{-1}(z)| = 1$.
\end{theoremC}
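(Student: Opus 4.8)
The plan is to use the dynamical characterization in Theorem~\ref{thm:A}(1), which says that a point $z\in i(\partial G)$ is non-conical with a single preimage precisely when $i^{-1}(z)=\{x\}$ for some $x$ that is asymptotic to $L_i$ (for if $|i^{-1}(z)|>1$ we would be in case (2) and fail the cardinality requirement). So the real goal is: \emph{find $x\in\partial G$ with $|i^{-1}(i(x))|=1$ which is asymptotic to $L_i$}, i.e., every conical sequence for $x$ has its pole pair inside $L_i$. I would first observe that $L_i$ is a nonempty algebraic lamination on $G$ (nonempty because $i$ is not injective), and that the set $End(L_i)$ of its ends is a $G$-invariant, $\overline{(\cdot)}$-closed subset of $\partial G$. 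Since $G$ acts on $\partial G$ as a minimal convergence group (uniform convergence action), the closure of any nonempty $G$-invariant subset is all of $\partial G$; in particular $\overline{End(L_i)}=\partial G$. The strategy is to build $x$ as a limit of points coming from $End(L_i)$ in a controlled way, so that $x$ itself "inherits" enough of the lamination structure to be asymptotic to $L_i$, while simultaneously being injective for $i$.

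The key mechanism I would exploit is: if $g\in G$ is loxodromic and its attracting fixed point $g^{+\infty}\in\partial G$ lies in $End(L_i)$ — say $(g^{+\infty},w)\in L_i$ — then, since the lamination $L_i$ is $G$-invariant and closed, applying high powers $g^n$ drags $w$ toward $g^{+\infty}$, and one can use this to show that $g^{+\infty}$ (or points near it) "see" a leaf of $L_i$ in every conical direction. More robustly, I would try to construct a nested sequence of "cylinder" neighborhoods $U_1\supseteq U_2\supseteq\cdots$ in $\partial G$ and group elements $g_n$ such that each $g_n$ maps a fixed leaf of $L_i$ into $U_n$ with both endpoints close together at scale $\to 0$, forcing the intersection point $x=\bigcap_n U_n$ to be a limit of leaves whose endpoint pairs collapse onto $x$; then any conical sequence for $x$ acts with the "expanding at $x$" dynamics, and pulling back the collapsing leaves through the conical sequence shows the pole pair $(x_-,x_+)$ is a limit of $G$-translates of leaves of $L_i$, hence lies in the closed set $L_i$, giving $i(x_-)=i(x_+)$, i.e., $x$ asymptotic to $L_i$. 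The injectivity condition $|i^{-1}(i(x))|=1$ I would arrange in parallel by a Baire category / diagonalization argument: the set of points with $\ge 2$ preimages is exactly $End(L_i)$, and I would want to steer $x$ to lie \emph{outside} $End(L_i)$ while still being a limit of ends. Alternatively, since $L_i\subseteq\partial^2 G$ is closed with empty interior considerations, one can argue the "diagonal" $\{(x,x)\}$ is hit in the closure $\overline{L_i}$ (closure in $\partial G\times\partial G$) at lots of points, and the issue is separating those $x$ with $(x,x)$ a limit of leaves but $x\notin End(L_i)$.

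The step I expect to be the main obstacle is precisely the \emph{simultaneous} control: ensuring the constructed $x$ is asymptotic to $L_i$ (a statement quantified over \emph{all} conical sequences for $x$, which is a strong "for all" condition) while keeping $|i^{-1}(i(x))|=1$ (forcing $x\notin End(L_i)$ and more). Handling "every conical sequence" is delicate because a priori $x$ might admit a conical sequence whose pole pair avoids $L_i$; I would need a structural dichotomy — perhaps showing that if $x$ is a limit of leaves of $L_i$ whose lengths shrink to zero around $x$ then every expanding sequence at $x$ expands one of these collapsing leaves into a definite-size leaf of $\overline{L_i}=L_i$. This likely requires quantitative hyperbolic geometry (working in the cocompact model space $X$ with $\partial X=\partial G$, using coarse leaf segments as in Theorem~\ref{thm:B}) rather than the purely topological convergence-group language. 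I would also need the no-accidental-parabolics hypothesis to rule out degenerate pole pairs and to guarantee that $L_i$ contains no "trivial" leaves forced by parabolic fixed points, and to apply Theorem~\ref{thm:A} cleanly. The absence of accidental parabolics is exactly what makes the fixed-point analysis of Lemma~\ref{L:CT fixed points} behave well, so I expect it to enter at the point where I convert the geometric collapsing-leaf picture back into the statement "$(x_-,x_+)\in L_i$ for every pole pair".
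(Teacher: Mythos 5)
You have correctly reduced the problem via Theorem~\ref{thm:A}(1) to producing a point $x\in\partial G$ with $|i^{-1}(i(x))|=1$ that is asymptotic to $L_i$, and your idea of making $x$ a limit of translated leaves so that every pole pair is a limit of leaves (hence in the closed set $L_i$) is the right half of the argument. But the step you yourself flag as ``the main obstacle'' --- arranging this \emph{simultaneously} with $x\notin End(L_i)$ --- is exactly where the proposal has a genuine gap, and neither the Baire category/diagonalization suggestion nor the ``collapsing cylinder'' picture supplies a mechanism for it. The difficulty is that the asymptotic-to-$L_i$ condition, as you set it up, wants every bounded window along a quasigeodesic ray to $x$ to look like a leaf segment, while injectivity wants the ray \emph{not} to fellow-travel any single leaf; you need a concrete way to build a ray that is locally leaf-like at every fixed scale but globally leaf-avoiding, and no such device appears in your plan. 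Also, the role of the no-accidental-parabolics hypothesis is not where you place it (it is not about ``degenerate pole pairs'' or trivial leaves): it is the ingredient that makes the leaf-avoidance mechanism work.

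The paper's solution is an explicit combinatorial construction plus a quantitative lemma (Lemma~\ref{L:poison power}): because there are no accidental parabolics, no leaf of the geodesic realization $\mathcal L_i$ can fellow-travel a $p$-periodic $r$-local geodesic (a quasi-axis of an infinite-order element) along a segment longer than some $c(p)$ --- otherwise a limit of leaves would share endpoints with the axis and $i$ would identify the fixed points of that element, making it an accidental parabolic. One then takes a ray $\ell_+$ of a leaf of $\mathcal L_i$, extracts subwords $v_m,u_m,t_m$ occurring in $\ell_+$, and forms the $r$-local geodesic ray $w_\infty=\alpha_r^{\kappa_r}v_rt_r\alpha_{r+1}^{\kappa_{r+1}}v_{r+1}t_{r+1}\cdots$ with $\alpha_m=v_mu_m$ and $\kappa_m|\alpha_m|>c(|\alpha_m|)$. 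The long periodic blocks $\alpha_m^{\kappa_m}$ prevent any subray of $w_\infty$ from lying near a leaf, so $x=w_\infty(\infty)\notin End(L_i)$ and $|i^{-1}(i(x))|=1$; yet every subword of length $m$ deep in $w_\infty$ (including subwords of the periodic blocks) is a subword of $\ell_+$, so for any conical sequence for $x$ the translated windows centered near the basepoint are segments of translated leaves, and closedness of $L_i$ forces the pole pair into $L_i$. This dual-scale trick --- periodic blocks that are leaf-like at bounded scales but, by Lemma~\ref{L:poison power}, not coarse leaf segments at their full length --- is the missing idea your proposal would need to be completed.
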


Suppose that $G$ is a hyperbolic group acting as a non-elementary convergence group on $Z$ as in the statement of the theorem, from which it follows that $G$ is also non-elementary.  Fix a finite generating set $\mathcal S$ for $G$, such that $\mathcal S=\mathcal S^{-1}$, and let $X$ be the Cayley graph of $G$ with respect to $\mathcal S$, endowed with the usual geodesic metric in which every edge has length $1$.  Then $X$ is $\delta$--hyperbolic for some $\delta > 0$. We denote the length of a geodesic segment $\sigma$ in $X$ as $\vert \sigma \vert$.  Recall that for $r>0$ an \emph{$r$--local geodesic} in $X$ is a path $\alpha$ parameterized by arclength such that every subsegment of $\alpha$ of length $r$ is a geodesic.
There exist integers $r > 0$ and $D > 0$ such that any $r$--local geodesic in $X$ is quasi-geodesic (with constants depending only on $r$ and $\delta$), and such that the Hausdorff distance between an $r$--local geodesic and the geodesic with the same endpoints is at most $D$; see e.g.~\cite[Part III, Chapter 1]{BridHae}.

Given an algebraic lamination $L \subset \partial^2 G$, define the {\em geodesic realization of $L$ with respect to $\mathcal S$}, denoted  $\mathcal L$, as the set of all $\ell \subset X$ such that there exist $x,y\in \partial G$ with $(x,y) \in L$ such that $\ell$ is a bi-infinite geodesic in $X$ from $x$ to $y$.
 
\begin{conv}
For the remainder of this section, we assume $G,Z,i$ are as in the statement of the theorem, ${\mathcal S},X,\delta,r,D$ are as above, let $L_i$ be the algebraic lamination associated to $i$ as in Definition~\ref{Def:CT lam and rel}, and let $\mathcal L_i$ denote the geodesic realization of $L_i$.
\end{conv}

%%% ---We can add the following paragraph in if we think it necessary to clarify---%%%
%%% An $r$--local geodesic $\gamma$ in $X$ determines a word in $\mathcal S^*$ (or $\mathcal S^{\mathbb N}$ or $\mathcal S^{\mathbb Z}$ if $\gamma$ is semi-infinite or bi-infinite, respectively).  Two $r$--local geodesics determine the same word if and only if they differ by an element of $G$.  We will distinguish between $r$--local geodesic segments, rays, and lines and the words they determine only when necessary, and will let the context determine the meaning otherwise.

Given integers $p \geq 1$, a \emph{$p$--periodic, $r$--local geodesic in $X$} is a bi-infinite $r$--local geodesic $\gamma$ in $X$ for which some element $g \in G$ acts on $\gamma$ translating a distance $p$ along $\gamma$.  As $\gamma$ is a quasi-geodesic, it follows that $g$ has infinite order (and $\gamma$ is a quasi-geodesic axis for $g$ in $X$).

We will use the following lemma in the proof of the theorem.
\begin{lem} \label{L:poison power}
For any $p\ge 1$, there exists $c(p)\ge 1$ with the following property.  If $\gamma$ is a $p$--periodic, $r$--local geodesic in $X$ and $\ell \in \mathcal L_i$ contains a segment $\sigma \subset \gamma$ in its $\delta + D$ neighborhood, then $|\sigma| < c(p)$.
\end{lem}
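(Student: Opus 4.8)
The plan is to argue by contradiction, exploiting the periodicity of $\gamma$ to reduce to a compactness statement and then contradicting Lemma~\ref{L:CT fixed points}. So suppose no such constant $c(p)$ exists. Then there are $\ell_k \in \mathcal L_i$ and subsegments $\sigma_k \subseteq \gamma$ with $\sigma_k$ contained in the $(\delta+D)$-neighborhood of $\ell_k$ and $|\sigma_k| \to \infty$. Let $g \in G$ be the element translating $\gamma$ a distance $p$ along $\gamma$, and fix a basepoint $o \in \gamma$. Since $g$ shifts the points of $\gamma$ by arc length $p$, for each $k$ there is an integer $m_k$ such that the midpoint of $g^{-m_k}\sigma_k$ lies within arc length $p$ of $o$ along $\gamma$; as $\gamma$ is an $r$-local geodesic and hence a quasi-geodesic, this also bounds the distance to $o$ in $X$. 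Because $L_i$ is $G$-invariant, so is its geodesic realization $\mathcal L_i$, and $G$ acts on $X$ by isometries; hence replacing $\ell_k$ by $g^{-m_k}\ell_k$ and $\sigma_k$ by $g^{-m_k}\sigma_k$ preserves all the relevant properties ($\ell_k \in \mathcal L_i$, $\sigma_k \subseteq \gamma$, $\sigma_k$ in the $(\delta+D)$-neighborhood of $\ell_k$, $|\sigma_k|\to\infty$), and now in addition each $\sigma_k$ is ``centered'' inside a ball of radius $p+\delta+D$ about $o$.

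Next I would run a compactness argument. Each $\ell_k$ is a bi-infinite geodesic meeting the fixed compact ball $B(o, p+\delta+D)$, so, parametrizing $\ell_k$ by arc length with $\ell_k(0)$ a nearest point to $o$ and using that $X$ is proper, Arzel\`a--Ascoli lets me pass to a subsequence along which $\ell_k \to \ell_\infty$ uniformly on compact sets, with $\ell_\infty$ a bi-infinite geodesic. Since the segments $\sigma_k \subseteq \gamma$ are centered near $o$ and have length tending to infinity, they exhaust $\gamma$, and as each lies in the $(\delta+D)$-neighborhood of $\ell_k$, passing to the limit gives $\gamma \subseteq N_{\delta+D}(\ell_\infty)$. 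Now $\gamma$ is a bi-infinite quasi-geodesic whose endpoints are the fixed points $\gamma_-=g^{-\infty}$ and $\gamma_+=g^{+\infty}$ of $g$ in $\partial G = \partial X$; its containment in a bounded neighborhood of the geodesic $\ell_\infty$ forces the two endpoints of $\ell_\infty$ to be exactly $\gamma_-$ and $\gamma_+$ (here one uses stability of quasi-geodesics in the $\delta$-hyperbolic space $X$). Moreover, the endpoints of $\ell_k$ form a pair in $L_i$ and, by uniform-on-compacts convergence, converge in $\partial G$ to the endpoints of $\ell_\infty$; since $L_i$ is closed and flip-invariant, this yields $(\gamma_-,\gamma_+)\in L_i$, i.e. $i(\gamma_-)=i(\gamma_+)$.

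Finally I would derive the contradiction from this. As remarked after the definition of $p$-periodic $r$-local geodesics, $g$ has infinite order, so $g$ is not elliptic on $Z$; since the action of $G$ on $Z$ has no accidental parabolics, $g$ must act on $Z$ as a loxodromic. By Lemma~\ref{L:CT fixed points}(1), the map $i$ sends $\gamma_+=g^{+\infty}$ and $\gamma_-=g^{-\infty}$ to the attracting and repelling fixed points of $g$ in $Z$, which are two distinct points. Hence $i(\gamma_-)\neq i(\gamma_+)$, contradicting the previous paragraph. Therefore a bound $c(p)$ as claimed exists (and one may further track the dependence, if needed, to see it depends only on $p$, $\delta$, $D$, and $r$).

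I expect the main obstacle to be making the compactness step fully rigorous: in particular, justifying that the limit geodesic $\ell_\infty$ has the same pair of endpoints as $\gamma$, which requires care with stability of quasi-geodesics and with the distinction between arc length along the $r$-local geodesic $\gamma$ and distance in $X$, and verifying that uniform-on-compacts convergence of the $\ell_k$ does produce convergence of their endpoints in $\partial G$. The contradiction with Lemma~\ref{L:CT fixed points} at the end is then immediate.
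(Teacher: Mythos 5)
Your argument is essentially the paper's proof: assume the bound fails, use the $G$-invariance of $\mathcal L_i$ and the translation $g$ along $\gamma$ to recenter the long segments near a fixed basepoint, extract a limit leaf $\ell_\infty$ by properness and closedness of $L_i$, conclude that $\ell_\infty$ shares its endpoints $g^{\pm\infty}$ with $\gamma$, and contradict the absence of accidental parabolics via Lemma~\ref{L:CT fixed points}. All of those steps are sound and match the paper.

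There is, however, one genuine (if small) gap in the quantifiers. The lemma asserts a single constant $c(p)$ valid for \emph{every} $p$-periodic, $r$-local geodesic $\gamma$, so the negation produces sequences $\ell_k$, $\sigma_k\subseteq\gamma_k$ in which the periodic geodesic $\gamma_k$ (and its translating element $g_k$) may vary with $k$. You fix a single $\gamma$ and a single $g$ at the outset, so your compactness argument only yields a bound depending on the particular $\gamma$, not on $p$ alone. The paper closes this by observing, as its first step, that for fixed $p$ there are only finitely many $G$-orbits of $p$-periodic, $r$-local geodesics, and that by $G$-invariance of $\mathcal L_i$ it suffices to treat one representative of each orbit; with that reduction your argument goes through verbatim. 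Relatedly, your closing parenthetical that $c(p)$ depends only on $p,\delta,D,r$ is not something the compactness argument delivers (the constant also depends on $G$ and on $L_i$), and it is not needed for the application in Theorem~\ref{thm:C}.
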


\begin{proof}
It suffices to prove this statement for any fixed $p$--periodic, $r$--local geodesic $\gamma$ in $X$ (since, for a given $p$,  there are only finitely many $G$--orbits of such $\gamma$).  Translating such $\gamma$ if necessary, we may assume that $\gamma$ passes through the identity $1$ in $G \subset X$.  Let $g \in G$ be a translation of length $p$ along $\gamma$.

Now if the requisite $c(p)$ does not exist, then there exists a sequence $\{ \ell_n \}_{n \geq 1}$ of elements of $\mathcal L_i$ so that each $\ell_n$ contains a segment $\sigma_n \subseteq \gamma$ of length at least $n$ in its $\delta + D$--neighborhood.  Since $\mathcal L_i$ is $G$--invariant, after applying an appropriate power of $g$ to $\ell_n$ if necessary, we can assume that the midpoint of $\sigma_n$ lies within distance $p$ of $1 \in G$.  In particular, for $n > p$, $1 \in \sigma_n$ and $\ell_n$ is within $\delta + D$ of $1$. Passing to a subsequence, we can assume that $\ell_n \to \ell \in \mathcal L_i$ as $n \to \infty$ (since $L_i$ is closed).  On the other hand, since $\sigma_n \to \gamma$, as $n \to \infty$, we see that $\gamma$ is within $\delta + D$ of $\ell$.  Therefore, $\ell$ and $\gamma$ have the same endpoints on $\partial G$. Since the endpoints of $\gamma$ are the fixed points of $g$, and the endpoints of $\ell$ are identified by $i$, it follows that $g$ is an accidental parabolic for the action on $Z$, yielding a contradiction.
\end{proof}

We are now ready for the proof of the theorem.
\begin{proof}[Proof of Theorem \ref{thm:C}.]
Let $\ell \in \mathcal L_i$ be a bi-infinite geodesic in $\mathcal L_i$ and $\ell_+ \subset \ell$ be a geodesic ray contained in $\ell$.  We can view $\ell_+$ as a semi-infinite word over the alphabet $\mathcal S$. 

For any $m \geq r$, let $v_m \in \mathcal S^\ast$ be a word of the length $m$ which occurs (positively) infinitely often in $\ell_+$.  Such a word exists, for every $m$, by the pigeonhole principal.  Now we define several additional families of sub-words of $\ell_+$.  These subwords will serve as the building blocks for a new $r$--local geodesic (hence quasigeodesic) infinite ray.

For each $m \geq r$
\begin{enumerate}
\item let $u_m$ be any subword of $\ell_+$ of length at least $m$ so that $v_mu_mv_m$ occurs in $\ell_+$.  Such $u_m$ exists because $v_m$ occurs in $\ell_+$ infinitely often;

\item let $t_m$ be any nonempty word so that $v_mt_mv_{m+1}$ occurs in $\ell_+$.  These exist for the same reason as $u_m$;
\item put $\alpha_m = v_mu_m$.
\end{enumerate}

Let $p_m = |\alpha_m|$, and let $\kappa_m > 0$ be an integer such that $\kappa_m p_m > c(p_m)$.
Note that, since $v_mu_mv_m$ is a subword of a geodesic ray $\ell_+$ with $|u_m|,|v_m|\ge m$,  it follows that the word $\alpha_m=v_mu_m$ is cyclically reduced and that for every $k\ge 1$ every subword of length $m$ in $\alpha_m^k$ is a geodesic and occurs as a subword of $v_mu_mv_m$ and thus of $\ell_+$.

Now consider the following semi-infinite word (which we also view as a semi-infinite path in $X$ with origin $1\in G$):
\[ w_\infty := \alpha_r^{\kappa_r}v_rt_r \alpha_{r+1}^{\kappa_{r+1}} v_{r+1}t_{r+1}\alpha_{r+2}^{\kappa_{r+2}} \cdots.\]
This word $w_\infty$ is naturally a union of subwords of the following forms:
\begin{enumerate}
\item $\alpha_m$, which is a subword of $\ell_+$;
\item $v_mt_mv_{m+1}$, which is a subword of $\ell_+$.
\item $\alpha_m^{\kappa_m}$, which is a word of length $p_m\kappa_m> c(p_m)$, is contained in a $p_m$--periodic, $r$--local geodesic.  As such, the word $\alpha_m^{\kappa_m}$ is not contained in a $D+\delta$-neighborhood of any  $\ell' \in \mathcal L_i$, by Lemma \ref{L:poison power}.  However, any subword of length $m$ of $\alpha_m^{\kappa_m}$ occurs in $\ell_+$.
\end{enumerate}
Moreover, any subword $v$ of $w_\infty$ of length $r$ is contained in at least one such word, and thus $v$ occurs as a subword of $\ell_+$. Therefore $w_\infty$ is an $r$--local geodesic in $X$ and hence a global quasigeodesic in $X$. Furthermore, note that as $m$ tends toward infinity, the lengths of the words $\alpha_m$ and $v_mt_mv_{m+1}$  tend to infinity.  Denote the endpoint of $w_\infty$ in $\partial G$ by $x$. 

First, we claim that $|i^{-1}(i(x))| = 1$.  If this were not the case, then the ray $w_\infty$  would be asymptotic to (i.e. have a finite Hausdorff distance to) an infinite ray $\ell_+' \subset \ell'$ for some geodesic $\ell' \in \mathcal L_i$.  In this case, a subray $w_\infty' \subset w_\infty$ would be contained in the $\delta + D$ neighborhood of $\ell_+'$.  Since this ray contains arcs labeled $\alpha_m^{\kappa_m}$ for $m$ sufficiently large, this contradicts Lemma \ref{L:poison power}.

Second, we claim that $i(x)$ is non-conical.   To prove this, let $\gamma$ be an $r$--local geodesic containing $w_\infty$ as a subray.  For example, let $\gamma$ be the concatenation of the ray which is $\alpha_r^{-\infty}$ with $w_\infty$.  One endpoint of $\gamma$ is $x$, and we denote the other by $y$.  Let $(g_n)_{n \ge 1}$ be any convergence sequence for $x$ with pole pair $(x_-,x_+)$.  Then $g_n(x) \to x_+$ and $g_n(y) \to x_-$.   Since $x_- \neq x_+$, after passing to a subsequence $\gnk$, the $r$--local geodesics $g_{n_k} \gamma$ must converge to an $r$--local geodesic with endpoints $x_-,x_+$.  After passing to a further subsequence (still denoted $\gnk$), it follows that $g_{n_k} \gamma$ converges to an $r$--local geodesic with endpoints $x_-,x_+$.  Since $\gnk$ is a convergence sequence for $x$, if $k$ is sufficiently large, any closest point $h_k \in g_{n_k} \gamma$ to $1$ must have $g_{n_k}^{-1}(h_k) \in w_\infty$ with distance to the initial point of $w_\infty$ tending toward infinity.  Passing to yet a further subsequence if necessary, we can assume that the subword $w_k \subset w_\infty$ of length $2k$ centered on $g_{n_k}^{-1}h_k$ is a subword of $\ell_+ \subset \ell$.  Thus for all $k$ there exists $\ell_k \in \mathcal L_i$ (a translate of $\ell$) so that the {\em segment} $w_k$ is contained in $\ell_k$.

Now observe that $g_{n_k}(w_k)$ is a segment of $g_{n_k}(\ell_k) \in \mathcal L_i$.  Because $g_{n_k}(w_k)$ is a geodesic of length $2k$ centered on $h_k$, it follows that $g_{n_k}(w_k)$, and hence $g_{n_k}(\ell_k)$, converges to a geodesic with endpoints $(x_-,x_+)$ as $k \to \infty$.  However, $g_{n_k}(\ell_k)$ must converge to a leaf of $\mathcal L_i$ since $L_i$ is closed.  Since $\gn$ was an arbitrary convergence sequence for $x$, $x$ is asymptotic to $L_i$, and by Theorem \ref{thm:A}, $i(x)$ is non-conical.
 \end{proof}

%%%%%%%%%%%%%%%%%%%%
%%%%%%%%%%%%%%%%%%%%
\section{Controlled concentration points}
%%%%%%%%%%%%%%%%%%%%
%%%%%%%%%%%%%%%%%%%%

\begin{defn}\label{d:controlled} 
Let $G$ be a non-elementary torsion-free discrete subgroup of hyperbolic isometries acting on $\Hy^n$ and let $S^{n-1}_\infty$ be the ideal boundary of $\Hy^n$.
A neighborhood $U\subset S^{n-1}_\infty$ of $x\in \Lambda(G)$ is called {\it concentrated} at $x$ if for every neighborhood $V$ of $x$, there exists an element $g\in G$ such that $x\in g(U)$ and $g(U)\subset V$.
If such $g$ can always be chosen so that $x\in g(V)$ then we say $U$ is {\it concentrated with control}.
A limit point $x$ in $\Lambda(G)$ is called a {\it controlled concentration point} if it has a neighborhood which is concentrated with control.
\end{defn}

A geodesic ray in $\Hy^n$ is called {\it recurrent with respect to $G$} if its image $\alpha$ in $M=\Hy^n/G$ by the covering projection is recurrent. Recall that a geodesic ray $\alpha$ parametrized by $[0,\infty)$ in $M$ is called {\it recurrent} if for any tangent vector $v=\alpha'(t_0), t_0>0$ in the unit tangent bundle $UT(M)$ of $M$, there exists an infinite sequence of times $\{t_i\}$ such that $\alpha'(t_i)$ converges to $v$ in $UT(M)$. 
The main result of \cite{AHM} is that controlled concentration points correspond to the end points of recurrent geodesic rays.
\begin{thm}[Aebischer, Hong and McCullough \cite{AHM}]
\label{CC} Let $G$ be as in Definition~\ref{d:controlled}.  Then for a limit point $x \in \Lambda(G)$, the following are equivalent: 
\begin{enumerate}
\item There is a recurrent geodesic ray whose endpoint is $x$.
\item $x$ is a controlled concentration point.
\item There exists a sequence $\{g_n\}$ of distinct elements of $G$ such that for any geodesic ray $\beta$ whose endpoint $\beta(\infty)$ is $x$, $g_n(\beta)$ converges to some geodesic ray whose endpoint is again $x$ up to taking a subsequence.
\item There exists a sequence $\{g_n\}$ of distinct elements of $G$  and $y\in S^{n-1}_\infty$ with $y\neq x$ such that $g_n x \rightarrow x$ and $g_n\vert_{S^n_\infty\setminus\{x\}}$ converges uniformly on compact subsets to the constant map to $y$.
\end{enumerate}
\end{thm}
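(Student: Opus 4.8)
The plan is to organize the proof around condition (4), proving each of (1), (2), (3) equivalent to it. This is convenient because (4) is phrased purely in terms of the convergence dynamics of $G$ on $S^{n-1}_\infty$, whereas the other three are respectively geometric (geodesic-flow recurrence in $M$), topological (concentration of neighborhoods), and ``projective'' (limits of translated rays). Throughout I would use that the convergence action on $S^{n-1}_\infty$ extends to the closed ball $\Hy^n\cup S^{n-1}_\infty$, so that a sequence of distinct elements $(g_n)$ with repelling point $a$ and attracting point $b$ satisfies $g_n\to b$ and $g_n^{-1}\to a$ uniformly on compact subsets of the complements of $a$ and $b$ respectively; in particular interior points of $\Hy^n$ are swept toward the attracting \emph{boundary} point, a fact I will invoke repeatedly.

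For (2) $\Leftrightarrow$ (4) I would simply translate the concentration condition into this language. Given (4), put $h_n=g_n^{-1}$, so that $h_n\to x$ uniformly on compacta of the complement of $y$; choosing a neighborhood $U$ of $x$ with $y\notin\overline U$, one checks that for all large $n$ one has $h_n(U)\subset V$, $x=h_n(g_nx)\in h_n(U)$, and $x\in h_n(V)$ (the last two because $g_nx\to x$), so $U$ is concentrated with control. Conversely, given a neighborhood $U$ concentrated with control, feeding in a shrinking neighborhood basis $V_m$ produces $h_m$ with $h_m(U)\subset V_m$ and $h_m^{-1}x\in V_m$; passing to a distinct subsequence (non-degeneracy of $U$ forces the $h_m$ to be eventually distinct) and extracting an attracting/repelling pair, the relation $h_m(U)\to x$ identifies $x$ as the attracting point of $h_m$, whence $g_m=h_m^{-1}$ has $x$ as repelling point and $g_mx\to x$, which is exactly (4).

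The core of the argument is (1) $\Leftrightarrow$ (4), where the geodesic flow on $UT(M)$ must be matched with the boundary dynamics. Fixing a lift $\tilde\alpha$ of a candidate ray with $\tilde\alpha(\infty)=x$, recurrence of $\alpha=\pi\circ\tilde\alpha$ means that for each $t_0$ there are $t_i\to\infty$ and $g_i\in G$ with $g_i\tilde\alpha'(t_i)\to\tilde\alpha'(t_0)$ in $UT(\Hy^n)$ (and the $g_i$ may be thinned to be distinct since $t_i\to\infty$). Continuity of the endpoint map gives $g_ix\to x$, and the key identification of $x$ as the repelling point of $g_i$ comes from writing $\tilde\alpha(t_i)=g_i^{-1}\big(g_i\tilde\alpha(t_i)\big)$: since $g_i\tilde\alpha(t_i)\to\tilde\alpha(t_0)$ lies in a fixed compact subset of $\Hy^n$ while $\tilde\alpha(t_i)\to x$, the interior contraction of $g_i^{-1}$ toward its attracting point forces that point to be $x$, i.e. $x$ is repelling for $g_i$; extracting the attracting point $y\neq x$ then yields (4). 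For the converse, starting from (4) I would take $\tilde\alpha$ to be a forward ray of the geodesic from $y$ to $x$; the lines $g_n\tilde\alpha$ converge to this geodesic, so if $g_n\tilde\alpha(s_n)$ is the point nearest a fixed $P=\tilde\alpha(t_0)$ then $g_n\tilde\alpha'(s_n)\to\tilde\alpha'(t_0)$, which projects to $\alpha'(s_n)\to\alpha'(t_0)$ in $UT(M)$, and the same computation $\tilde\alpha(s_n)=g_n^{-1}(g_n\tilde\alpha(s_n))\to x$ shows $s_n\to+\infty$, giving recurrence.

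Finally (3) $\Leftrightarrow$ (4) is almost formal once one extends a ray $\beta$ with endpoint $x$ to a bi-infinite geodesic from some $w\neq x$ to $x$: under (4) the endpoints $g_nw\to y$ and $g_nx\to x$ converge to the distinct pair $(y,x)$, so $g_n\beta$ sub-converges to the forward ray of the geodesic from $y$ to $x$, which has endpoint $x$, giving (3); conversely, extracting an attracting/repelling pair from a sequence as in (3) and using that the limit of $g_n\beta$ is a genuine (non-degenerate) ray with endpoint $x$ forces the repelling point to equal $x$ and the attracting point $y$ to be distinct from $x$, which is (4). The main obstacle, and the step needing the most care, is the matching in (1) $\Leftrightarrow$ (4): one must pass correctly between the unit-tangent data recording recurrence in $UT(M)$ and the boundary convergence dynamics, in particular controlling the escape of base points and justifying the identification of $x$ with the repelling point via the interior contraction of $g_i^{-1}$; the bookkeeping ensuring that the produced elements are distinct and that the various limits are non-degenerate (notably $y\neq x$) also requires attention throughout.
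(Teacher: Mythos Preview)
The paper does not actually prove this theorem: it is stated as the main result of Aebischer--Hong--McCullough \cite{AHM} and simply cited, with no argument given. So there is no ``paper's own proof'' to compare your attempt against.

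That said, your outline is mathematically sound and follows what is essentially the natural route. A couple of points deserve more care than you give them. First, in your $(1)\Rightarrow(4)$ step you assert that the attracting point $y$ of the extracted subsequence satisfies $y\neq x$, but your justification (``extracting the attracting point $y\neq x$'') is not an argument. The clean way is to extend the ray $\tilde\alpha$ backward to a bi-infinite geodesic with backward endpoint $w\neq x$; since $g_i\tilde\alpha'(t_i)\to\tilde\alpha'(t_0)$ in $UT(\Hy^n)$, the translated geodesics converge to the same bi-infinite geodesic, forcing $g_iw\to w$, whence $y=w\neq x$. Second, in $(2)\Rightarrow(4)$ you should say explicitly why the extracted $h_m$ are eventually distinct: this uses that $U$ has nonempty interior disjoint from $x$, so $h_m(U)\subset V_m$ with $V_m$ shrinking cannot be achieved by finitely many isometries. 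These are minor gaps in an otherwise correct sketch; since the paper offers nothing to compare with, your write-up stands on its own.
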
 

From the last characterization of a controlled concentration point, it is clear that every controlled concentration point is conical, but the converse is not true in general.
In fact \cite[Prop. 5.1]{AHM} gives an example of a conical limit point which is not a controlled concentration point in the case of a rank-$2$ Schottky group. 
It is also known that the set of controlled concentration points has full Patterson-Sullivan measure in $\Lambda(G)$ if $G$ is of divergence type. 
Note that for a geodesic lamination $\lambda$ in a hyperbolic surface $S$, a leaf of $\lambda$ is always a recurrent geodesic, and hence its endpoints are controlled concentration points.

The following proposition follows easily from the condition (1) of Theorem \ref{CC}.
\begin{prop}
A limit point $x$ in $\Lambda(G)\subset S^{n-1}_\infty$ is a controlled concentration point if  there exists a geodesic ray $\beta$ in $\Hy^n$ which limits to $x$ and the $\omega$-limit set of $\beta$ in the geodesic foliation on the unit tangent bundle $UT(M)$ of $M=\Hy^n/G$ has only one minimal component.
\end{prop}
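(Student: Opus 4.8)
The plan is to verify condition (1) of Theorem~\ref{CC} for $x$, i.e. to produce a recurrent geodesic ray in $M$ whose endpoint is $x$. Write $\pi\colon\Hy^n\to M$ for the covering projection, set $\bar\beta=\pi\circ\beta$, and let $\mathcal{M}\subset UT(M)$ be the unique minimal component of the $\omega$-limit set $\omega(\bar\beta)$ of $\bar\beta$ under the geodesic flow. Since $\mathcal{M}$ is minimal it is compact, and every $v\in\mathcal{M}$ has $\omega$-limit set equal to all of $\mathcal{M}$; hence every geodesic ray in $M$ tangent to $\mathcal{M}$ is recurrent (its forward orbit is dense in $\mathcal{M}$ and equals its own $\omega$-limit set). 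It therefore suffices to show that $x$ is the forward endpoint in $S^{n-1}_\infty$ of some geodesic of $\Hy^n$ tangent to the $G$-invariant closed lift $\widetilde{\mathcal{M}}:=\pi^{-1}(\mathcal{M})\subset UT(\Hy^n)$: the forward ray of such a geodesic projects to a recurrent ray in $M$ with endpoint $x$, and Theorem~\ref{CC} then finishes the argument.

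The core step is to show that $\bar\beta'(t)$ converges to $\mathcal{M}$ as $t\to\infty$. Here one uses that $\omega(\bar\beta)$ is a nonempty closed flow-invariant set in which $\mathcal{M}$ is the only minimal subset: any accumulation point $w$ of $\{\bar\beta'(t)\}$ lies in $\omega(\bar\beta)$, and the closure of its forward orbit, being a closed flow-invariant subset of $\omega(\bar\beta)$, must contain a minimal set, hence $\mathcal{M}$; combining this with the fact that $\bar\beta$ is a single geodesic ray heading toward the one boundary point $x$, together with the convergence dynamics of $G$ on $\Lambda(G)$, forces $\bar\beta'$ not to accumulate away from $\mathcal{M}$. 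This is the main obstacle: the uniqueness hypothesis is exactly what rules out the possibility that $\bar\beta'$ oscillates between several minimal sets, or more generally exhibits ``Bowen-type'' recurrence inside $\omega(\bar\beta)$ while converging to no minimal piece; without it, $\omega(\bar\beta)$ need not coincide with its minimal component and the conclusion can fail.

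Granting $d(\bar\beta'(t),\mathcal{M})\to 0$, the rest is a lifting argument. Since $\widetilde{\mathcal{M}}$ is $G$-invariant, convergence of $\bar\beta'(t)$ to $\mathcal{M}$ in $UT(M)$ yields $d(\beta'(t),\widetilde{\mathcal{M}})\to 0$ in $UT(\Hy^n)$; pick $\tilde w_t\in\widetilde{\mathcal{M}}$ with $d(\beta'(t),\tilde w_t)\to 0$ and let $\tilde\ell_t$ be the complete geodesic through $\tilde w_t$. Because the forward endpoint of $\beta'(t)$ is $x=\beta(\infty)$ while the basepoint $\beta(t)\to x$, a visual-angle estimate in $\Hy^n$ shows the forward endpoints $\tilde\ell_t^{+}\in S^{n-1}_\infty$ converge to $x$. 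Passing to a subsequence along which $\tilde\ell_t$ converges to a geodesic $\tilde\ell_\infty$, closedness of $\widetilde{\mathcal{M}}$ makes $\tilde\ell_\infty$ again tangent to $\widetilde{\mathcal{M}}$, with forward endpoint $\lim\tilde\ell_t^{+}=x$; the degenerate case in which $\tilde\ell_t$ collapses to the point $x$ is excluded because the $\tilde\ell_t$ are $G$-translates of leaves of a fixed compact lamination in $M$, and so cannot escape every compact subset of $\Hy^n$ while remaining close to $\beta(t)$. The forward ray of $\pi\circ\tilde\ell_\infty$ is then the required recurrent geodesic ray ending at $x$, so $x$ is a controlled concentration point.
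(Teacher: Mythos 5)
The paper records no detailed argument here (it simply says the statement follows from condition (1) of Theorem~\ref{CC}), and your overall plan --- produce a recurrent geodesic ray ending at $x$ by using the minimal component $\mathcal{M}$, then invoke Theorem~\ref{CC}(1) --- is the intended route. The trouble is that the two steps you leave vague are exactly where all the content lies. The pivotal claim $d(\bar\beta'(t),\mathcal{M})\to 0$ is asserted, not proved, and it does \emph{not} follow from the hypothesis that $\omega(\bar\beta)$ has only one minimal component: an $\omega$-limit set can be strictly larger than its unique minimal subset. Concretely, in a Schottky group $\langle a,b\rangle$ let $\beta$ be the ray toward the limit point $x$ coded by $a^{n_1}ba^{n_2}ba^{n_3}b\cdots$ with $n_k\to\infty$; then $\omega(\bar\beta)$ consists of the closed geodesic $c$ corresponding to $a$ together with finitely many geodesics homoclinic to $c$, so its unique minimal component is $c$, yet $\bar\beta'(t)$ leaves a fixed neighborhood of $c$ at arbitrarily large times, and in fact no ray ending at this $x$ is recurrent (any such ray would have to be a sub-ray of $c$ or of a homoclinic orbit, forcing $x\in G\cdot\{a^{\pm\infty}\}$, which fails). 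So no appeal to ``convergence dynamics of $G$'' can rescue your convergence step; the hypothesis has to be read as saying that $\omega(\bar\beta)$ \emph{coincides} with its single minimal component, i.e. is itself minimal. Relatedly, minimality alone does not give compactness of $\mathcal{M}$ in the noncompact space $UT(M)$ (a properly embedded orbit escaping to infinity is closed and minimal), and compactness is what you use both to find minimal subsets of orbit closures and to conclude that rays tangent to $\mathcal{M}$ are recurrent.

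Even granting $d(\bar\beta'(t),\mathcal{M})\to 0$, your final extraction of a leaf of $\widetilde{\mathcal{M}}$ with forward endpoint $x$ is incomplete. The degenerate case you must exclude is that both endpoints of $\tilde\ell_t$ tend to $x$, and the reason you give (the $\tilde\ell_t$ ``cannot escape every compact subset of $\Hy^n$ while remaining close to $\beta(t)$'') is vacuous, because the points $\beta(t)$ themselves leave every compact subset of $\Hy^n$; nothing you have said prevents the backward endpoints $\tilde\ell_t^{-}$ from also converging to $x$. What is actually needed is that $\bar\beta$ is forward asymptotic to a \emph{single} orbit of $\mathcal{M}$ --- for instance via the standard fact (an in-phase/shadowing argument for the Anosov geodesic flow, or a direct constant-curvature computation) that the stable set of a compact invariant set is the union of the weak stable manifolds of its points. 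Once $\bar\beta$ is asymptotic to an orbit of $\mathcal{M}$, the lift of that orbit along $\beta$ is a leaf of $\widetilde{\mathcal{M}}$ with forward endpoint $x$, its forward ray is recurrent since $\mathcal{M}$ is compact minimal, and Theorem~\ref{CC}(1) finishes. As written, both the convergence step and this non-degeneracy step are genuine gaps rather than routine verifications.
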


We extend the notion of controlled concentration points to the case of hyperbolic groups.
\begin{defn}\label{defn:controlled}
Let $G$ be a non-elementary hyperbolic group. Then we say $x \in \partial G$ is a \emph{controlled concentration point} if 
there exists a sequence $\{g_n\}$ of distinct elements of $G$  and $y\in \partial G$ with $y\neq x$ such that $g_n x \rightarrow x$ and $g_n\vert_{\partial G\setminus\{x\}}$ converges locally uniformly to the constant map to $y$.
\end{defn}

\begin{prop}\label{propCC}
Suppose $G$ is word-hyperbolic and acts on the compact, metrizable space $Z$ as a non-elementary convergence group, and suppose $i \colon \partial G \to Z$ is a Cannon-Thurston map. If a controlled concentrated point $x \in \partial G$  satisfies
$\vert i^{-1}(i(x))\vert=1$ then $i(x)\in Z$ is conical. 
\end{prop}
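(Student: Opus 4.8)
The plan is to reduce the statement immediately to part~(1) of Theorem~\ref{thm:A}. First I would unpack Definition~\ref{defn:controlled}: a controlled concentration point $x\in\partial G$ comes equipped with a sequence $(g_n)_{n\ge1}$ of distinct elements of $G$ and a point $y\in\partial G$ with $y\ne x$ such that $g_nx\to x$ in $\partial G$ and $g_n|_{\partial G\setminus\{x\}}$ converges locally uniformly to the constant map $c_y$. Comparing this with Definition~\ref{Def:conical} (applied to the uniform convergence action of $G$ on $Z=\partial G$), this says precisely that $(g_n)_{n\ge1}$ is a conical sequence for $x$, with associated pole pair $(x_-,x_+)=(y,x)$; the two entries are indeed distinct since $y\ne x$.

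Next I would invoke the hypothesis $|i^{-1}(i(x))|=1$, which forces $i^{-1}(i(x))=\{x\}$. Since $y\ne x$, it follows that $i(y)\ne i(x)$, so the pole pair $(y,x)$ of the conical sequence $(g_n)$ is \emph{not} a leaf of $L_i$. By the definition of ``asymptotic to $L_i$'' recalled just before the statement of Theorem~\ref{thm:A}, this exhibits a conical sequence for $x$ whose pole pair is not in $L_i$, and hence $x$ is \emph{not} asymptotic to $L_i$. Finally, applying part~(1) of Theorem~\ref{thm:A} with $z=i(x)\in i(\partial G)$: that statement says $z$ fails to be a conical limit point for the action of $G$ on $Z$ if and only if some point of $i^{-1}(z)$ is asymptotic to $L_i$; since $i^{-1}(z)=\{x\}$ and $x$ is not asymptotic to $L_i$, we conclude that $z=i(x)$ is a conical limit point for the action of $G$ on $Z$.

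There is no real obstacle here, since the substance is entirely carried by Theorem~\ref{thm:A}; the only point meriting a moment's attention is the bookkeeping in the first step, namely verifying that the sequence witnessing that $x$ is a controlled concentration point is literally a conical sequence for $x$ in the sense of Definition~\ref{Def:conical}, with the pole pair recorded in the order $(y,x)$ (though by flip-invariance of $L_i$ the ordering is immaterial for membership in $L_i$). One could alternatively avoid citing Theorem~\ref{thm:A} and argue directly: extract from $(g_n)$ a convergence subsequence on $Z$ with data $(a,b,(g_{n_k}))$, use continuity of $i$ together with the infiniteness of $i(\partial G)$ to identify $a=i(x)$ and $b=i(y)$, and note $g_{n_k}(i(x))\to i(x)\ne i(y)$ to see that $i(x)$ is conical; but this merely reproduces the relevant half of the proof of Theorem~\ref{thm:A}(1), so the reduction above is cleaner.
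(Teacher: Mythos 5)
Your argument is correct, and the key observations are exactly the ones the paper uses: the data witnessing that $x$ is a controlled concentration point is literally a conical sequence for $x$ acting on $\partial G$ with pole pair $(y,x)$, and $|i^{-1}(i(x))|=1$ forces $i(y)\neq i(x)$. Where you diverge is in how you finish: you package this as ``$x$ is not asymptotic to $L_i$'' and quote Theorem~\ref{thm:A}(1), whereas the paper's own proof of Proposition~\ref{propCC} does not invoke Theorem~\ref{thm:A} at all. Instead it argues directly by contradiction: assuming $i(x)$ is not conical, the convergence property on $Z$ yields a subsequence along which $g_{n_k}i(x)$ and $g_{n_k}i(y)$ have a common limit, while continuity and equivariance of $i$ give $g_{n_k}i(x)=i(g_{n_k}x)\to i(x)$ and $g_{n_k}i(y)=i(g_{n_k}y)\to i(y)$, forcing $i(x)=i(y)$, a contradiction. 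So your reduction is cleaner and makes transparent that the proposition is a formal consequence of Theorem~\ref{thm:A}(1) (in the direction: no point of the fiber asymptotic to $L_i$ implies $i(x)$ conical), at the cost of routing through the ``asymptotic to $L_i$'' formalism; the paper's version is self-contained and only uses the convergence property together with continuity and equivariance. The alternative direct argument you sketch in your last sentence is, in contrapositive form, essentially the paper's actual proof, so you have in effect recovered both routes.
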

\begin{proof}
Since $x$ is a controlled concentrated point, there exist $y\in \partial G$ with $y\neq x$ and a sequence $\{g_n\}$ of distinct elements in $G$ such that
$\limn g_n x =x$ and $(g_n \vert_{\partial G\setminus \{x\}})$ locally uniformly converges to the constant map to $y$.
Note that $\vert i^{-1}(i(x))\vert=1$ implies $i(x)\neq i(y)$.
Suppose $i(x)\in Z$ is not conical. Then by Proposition~\ref{prop:bow} there exists a subsequence $(g_{n_k})$ of $(g_n)$ such that $\limk g_{n_k}i(x) = \limk g_{n_k}i(y)$ and hence by continuity
\[ (i(x),i(y)) = \limk (i(g_{n_k}x), i(g_{n_k}y))= \limk (g_{n_k}i(x), g_{n_k}i(y)) = (z,z) \]
for some $z\in Z$, and hence $i(x) = i(y)$.  This is a contradiction.
\end{proof}

We can now prove the last theorem  from Introduction:

\begin{theoremD}
Let $G$ be a non-elementary torsion-free word-hyperbolic group. Then there exists $x\in \partial G$ which is not a controlled concentration point.
\end{theoremD}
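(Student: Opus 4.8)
The plan is to deduce Theorem~\ref{thm:D} from Theorem~\ref{thm:C} by choosing an auxiliary convergence action of $G$ on a suitable compact metrizable space $Z$ for which a non-injective Cannon--Thurston map exists and for which the action has no accidental parabolics. The key point is that Definition~\ref{defn:controlled} of a controlled concentration point for the action of $G$ on $\partial G$ is exactly the condition that appears (for the target space) in Proposition~\ref{propCC}: a point $x\in\partial G$ with $|i^{-1}(i(x))|=1$ and $i(x)$ non-conical cannot satisfy the controlled-concentration condition, since $\{g_n\}$ realizing controlled concentration would, by the argument of Proposition~\ref{propCC}, force $i(x)$ to be conical. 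So it suffices to produce such an action together with a point $x$ that is the preimage of an injective, non-conical limit point, and then note that $x$ is not a controlled concentration point.

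First I would take $Z$ to be a second copy of $\partial G$ but equipped with a \emph{different} convergence action of $G$, chosen so that the resulting Cannon--Thurston-type map is non-injective. Concretely, since $G$ is non-elementary, torsion-free, and word-hyperbolic, one can pick a loxodromic element and quotient $\partial G$ by collapsing the $G$--orbit of a single pair of distinct points $(\xi^+,\xi^-)$ that are \emph{not} the fixed points of any infinite-order element — more carefully, one wants a non-trivial $G$-invariant, flip-invariant, closed equivalence relation on $\partial G$ whose quotient $Z$ carries a convergence action of $G$ with no accidental parabolics, and such that the quotient map $i\colon\partial G\to Z$ is $G$-equivariant and continuous, hence a Cannon--Thurston map in the sense of Definition~\ref{D:CT}. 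The cleanest source of such an action is a non-quasiconvex hyperbolic subgroup, or an ascending HNN / free-by-cyclic construction: by the results quoted in the excerpt (Mitra~\cite{M2}), for $N\ge 3$ and $\phi\in\Out(F_N)$ fully irreducible and atoroidal, $G_\phi=F_N\rtimes_\phi\mathbb Z$ is word-hyperbolic and the Cannon--Thurston map $\partial F_N\to\partial G_\phi$ exists and is non-injective. That gives the statement for $F_N$; to get it for an arbitrary non-elementary torsion-free word-hyperbolic $G$, I would instead simply apply Theorem~\ref{thm:C} directly to whatever convergence action on a compact metrizable $Z$ one can manufacture for $G$ — but since the theorem asserts existence unconditionally, the real content must be that \emph{every} such $G$ admits \emph{some} non-injective Cannon--Thurston map with no accidental parabolics.

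Thus the heart of the argument, and the main obstacle, is the following reduction step: given an arbitrary non-elementary torsion-free word-hyperbolic group $G$, construct a compact metrizable space $Z$ with a non-elementary convergence action of $G$, without accidental parabolics, admitting a non-injective Cannon--Thurston map $i\colon\partial G\to Z$. I expect this to be done by a doubling or amalgamation trick: embed $G$ as a subgroup of a larger word-hyperbolic group $\widehat G$ in which $G$ is \emph{not} quasiconvex but the Cannon--Thurston map $\partial G\to\partial\widehat G$ still exists — for instance via Mitra's construction applied to a short exact sequence with kernel $G$, or via a graph-of-groups construction producing exponential distortion. One needs $G$ itself to have infinite index and be non-quasiconvex so that $i$ is non-injective (by Proposition~\ref{prop:inj} and the remark following it, injectivity of $i$ would force quasiconvexity), and one needs the ambient action on $\partial\widehat G$ to have no accidental parabolics for $G$, which holds automatically when $\widehat G$ is torsion-free word-hyperbolic since then every infinite-order element acts loxodromically on $\partial\widehat G$ (Proposition~\ref{prop:ct-geom}(1)).

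Once such a $Z$ and $i$ are in hand, the conclusion is immediate: by Theorem~\ref{thm:C} there is a non-conical limit point $z\in Z$ with $|i^{-1}(z)|=1$; write $\{x\}=i^{-1}(z)$. By the contrapositive of Proposition~\ref{propCC}, if $x$ were a controlled concentration point for the action of $G$ on $\partial G$ then $z=i(x)$ would be conical, contradicting the choice of $z$. Hence $x\in\partial G$ is not a controlled concentration point, which proves Theorem~\ref{thm:D}. The only delicate points to verify along the way are (i) that the constructed action on $Z$ is genuinely non-elementary — guaranteed since $i$ is $G$-equivariant with infinite image and $G$ is non-elementary — and (ii) the torsion-freeness hypothesis on $G$, which is used precisely to rule out torsion phenomena and, together with the ambient hyperbolicity, to guarantee the no-accidental-parabolics condition needed to invoke Theorem~\ref{thm:C}.
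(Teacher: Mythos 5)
Your reduction is the right one, and it is the same as the paper's: produce a non-elementary convergence action of $G$ with a non-injective Cannon--Thurston map and no accidental parabolics, invoke Theorem~\ref{thm:C} to get a non-conical point $z$ with $|i^{-1}(z)|=1$, and then use (the contrapositive of) Proposition~\ref{propCC} to conclude that the unique preimage $x$ is not a controlled concentration point. But you have left the actual content of the theorem as an acknowledged ``main obstacle'': you never construct the required action for an \emph{arbitrary} non-elementary torsion-free word-hyperbolic $G$, and the routes you float do not close the gap. A short exact sequence $1\to G\to G_2\to Q\to 1$ of hyperbolic groups with kernel $G$ simply does not exist in general (for instance it forces $\mathrm{Out}(G)$ to be infinite when $Q$ is), so Mitra's extension theorem~\cite{M2} cannot be the source; and the quotient-space idea in your first paragraph (collapsing a $G$-orbit of a pair of boundary points) does not obviously yield a convergence action at all, since the $G$-orbit of a single pair is not closed in $\partial^2G$ and its saturation is hard to control.

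The missing ingredient, which is exactly what the paper supplies, is Kapovich's non-quasiconvexity embedding theorem~\cite{Ka99}: every non-elementary torsion-free word-hyperbolic $G$ embeds in a word-hyperbolic group $G_\ast=\langle G,t\mid t^{-1}Kt=K_1\rangle$, an HNN-extension over a quasiconvex free subgroup $K\le G$ of rank $2$ (with $K_1\le K$ also free of rank $2$ and quasiconvex), in which $G$ is \emph{not} quasiconvex. Because this is a graph of groups with hyperbolic vertex group and quasiconvex hyperbolic edge group, Mitra's theorem on trees of hyperbolic metric spaces~\cite{M4} (see also \cite{MP11}) guarantees that the Cannon--Thurston map $i\colon\partial G\to\partial G_\ast$ exists; non-quasiconvexity plus Proposition~\ref{prop:inj} then forces $i$ to be non-injective, and there are no accidental parabolics since every infinite-order element of the hyperbolic group $G_\ast$ acts loxodromically on $\partial G_\ast$. (This is also where the torsion-freeness hypothesis is genuinely used: it is a hypothesis of the embedding theorem of \cite{Ka99}, not merely a device for excluding parabolics, as torsion elements could never be accidental parabolics anyway.) With this construction in place, your final paragraph completes the proof exactly as in the paper; without it, the argument asserts existence of the needed action rather than proving it.
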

\begin{proof}
Kapovich~\cite{Ka99} proved that, given a non-elementary  torsion-free word-hyperbolic group $G$, there exists a word-hyperbolic group $G_\ast$ containing $G$ as a non-quasiconvex subgroup.
Moreover,  $G_\ast$ is constructed in \cite{Ka99} as an HNN-extension \[G_\ast=\langle G, t | t^{-1} K t= K_1\rangle\] where $K\le G$ is a quasiconvex free subgroup of rank $2$ and where $K_1\le K$ is also free of rank $2$ (and hence $K_1$ is also quasiconvex in $G$). Therefore, by a general result of Mitra~\cite{M4} (see also \cite{MP11}) about graphs of groups with hyperbolic edge and vertex groups, there does exist a Cannon-Thurston map $i:\partial G\to\partial G_\ast$. Since $G\le G_\ast$ is not quasiconvex, Proposition~\ref{prop:inj} implies that the map $i$ is not injective.  Therefore,  by Theorem \ref{thm:C}, there exists a non-conical limit point $z\in i(\partial G)$ with $\vert i^{-1}(z)\vert=1$. By Proposition~\ref{propCC}, $x=i^{-1}(z)\in \partial G$ is not a controlled concentration point.
\end{proof}

%\begin{cor}
%Under the assumptions of Theorem \ref{thm:C},
%there exists $x\in \partial G$ which is not concentrated with control.
%\end{cor}
%\begin{proof}
%By Theorem \ref{thm:C}, there exists a non-conical limit point $z\in Z$ with $\vert i^{-1}(z)\vert=1$.
%By Prop. \ref{propCC}, $i^{-1}(x)$ cannot be concentrated with control.
%\end{proof}

%%%%%%%%%%%%%%%%%
%%%%%%%%%%%%%%%%%
%%%%%%%%%%%%%%%%%
%%%%%%%%%%%%%%%%%
%%%%%%%%%%%%%%%%%
%%%%%%%%%%%%%%%%%
%%%%%%%%%%%%%%%%%
%%%%%%%%%%%%%%%%%

\appendix
\section{Descriptions of $L_i$}\label{s:appendix}

There are several situations where the Cannon-Thurston map $i:\partial G\to Z$ is known to exist and where a more detailed description of the lamination $L_i$ is known.  Theorem~\ref{thm:A} and Theorem~\ref{thm:B} may be useful in these contexts. The proof of Theorem~6.6 in \cite{DKT} uses Theorem~\ref{thm:B} as a key ingredient in this way.

%%%%%%%%%%%%%%%%%
\subsection{Kleinian representations of surface groups}
%%%%%%%%%%%%%%%%%
Let $G$ be the fundamental group of a closed, orientable hyperbolic surface $S$.   The universal covering of $S$ is isometric to the hyperbolic plane $\Hy^2$ with $G$ acting cocompactly by isometries, and so we can identify the Gromov boundary of $G$ with the circle at infinity $\partial G \cong S^1_\infty$.  A faithful {\em Kleinian representation} $\rho \colon G \to \PSL(2,\mathbb C) \cong {\rm Isom}^+(\Hy^3)$ is an injective homomorphism with discrete image.  This determines a convergence action of $G$ on $S^2_\infty$, and hence also on the limit set $\Lambda(G)$.  The existence of a Cannon--Thurston map for such groups was first proved in the special case when $\rho(G)$ is the fiber subgroup of a  hyperbolic $3$--manifold fibering over the circle by Cannon and Thurston \cite{CT}.  This was extended to include other classes of Kleinian representations of $G$ in \cite{Min94, M4} and then arbitrary faithful, Kleinian representation of $G$ in \cite{M14}.

The hyperbolic 3-manifold $M=\Hy^3/\rho(G)$ is homeomorphic to $S\times (-\infty, \infty)$ by the Tameness Theorem (\cite{Bon}, and \cite{Ag, CG} in more general settings) and thus $M$ has only two ends, $E_+$ and $E_-$.  
Assume that $\rho(G)$ has no parabolics.
Associated to each end is a (possibly empty) {\em ending lamination} $\lambda_+$ and $\lambda_-$, which is a geodesic lamination on $S$, that is a closed union of pairwise disjoint complete geodesics; see \cite{CB} for more on geodesic laminations and \cite{Th,ELC1,ELC2} for more on the ending laminations associated to ends of $3$--manifolds.  
%(To be more precise when $\rho(G)$ has parabolics, $\lambda_+$ (resp. $\lambda_-$) is the union of upper (resp. lower) ending laminations and upper (resp. lower) parabolic curves.)
The preimage $\widetilde \lambda_\pm \subset \Hy^2$ of the ending laminations in $\Hy^2$ are geodesic laminations in $\Hy^2$, and the endpoints of the leaves determine a pair of algebraic laminations $L_\pm \subset \partial^2 G$.  Set $\mathcal R_+, \mathcal R_- \subset \partial G \times \partial G$ to be the reflexive, and transitive closures of the pair $L_+,L_-$, respectively.  
Then for the Cannon--Thurston map $i$ has $i(x) = i(y)$ if and only if $(x,y) \in \mathcal R_+ \cup \mathcal R_-$ according to \cite{CT} in the original setting, \cite{Min94} the cases treated there, and in general in \cite{M07}.    Furthermore, the transitive closure adds only endpoints of finitely many $G$--orbits of leaves, and thus $L_i$ is equal to $L_+ \cup L_-$, together with finitely many additional $G$--orbits of leaves (which correspond to the ``diagonals'' of the complementary components of $\widetilde \lambda_+$ and $\widetilde \lambda_-$).

%%%%%%%%%%%%%%%%%
\subsection{Short exact sequences of hyperbolic groups}
%%%%%%%%%%%%%%%%%

Let \[1\to G_1\to G_2\to Q \tag{$\ddag$}\] be a short exact sequence of three word-hyperbolic groups, such that $G_1$ is non-elementary. In this case $G_1$ acts on $Z=\partial G_2$ as a non-elementary convergence group without accidental parabolics. Mitra~\cite{M2} proved that in this case the Cannon-Thurston map $i:\partial G_1\to \partial G_2$ does exist. Therefore the results of this paper, including Theorem~\ref{thm:B}, do apply. 
In \cite{M1} Mitra also obtained a general geometric description of $L_i$ in this case in terms of the so-called ``ending laminations".

We give here a brief description of the results of \cite{M1}.

Given every $\xi\in\partial Q$, Mitra defines an ``ending lamination''
$\Lambda_\xi\subseteq \partial^2 G_1$. To define $\Lambda_\xi$, Mitra starts with choosing a
quasi-isometric section $r: Q\to G_2$ (he later proves that the specific
choice of $r$ does not matter). Then given any $\xi\in \partial Q$, take
a geodesic ray in $Q$ towards $\xi$ and let $\xi_n$ be the point at
distance $n$ from the origin on that ray. Lift $\xi_n$ to $G_2$ by the
section $r$ to get an element $g_n=r(\xi_n)\in G_2$. Conjugation by $g_n$
gives an automorphism $\phi_n$ of $G_1$ defined as
$\phi_n(h)=g_nhg_n^{-1}$, $h\in G_1$. Now pick any non-torsion element
$h\in G_1$. Then look at all $(x,y)\in \partial^2 G_1$ such that there exists a sequence of integers $k_n\to\infty$ and of conjugates (with respect to conjugation in $G_1$) $w_n$ of $\phi_{k_n}(h)$  in $G_1$ such that $\lim_{n\to\infty} (w_n^{-\infty}, w_n^\infty)=(x,y)$ in $\partial^2 G_1$.
 For a fixed non-torsion $h\in G_1$,
the collection of all such $(x,y)\in \partial^2 G_1$ is denoted
$\Lambda_{\xi,h}$.  Denote by $A$ the set of all elements of infinite order in $G_1$.
Finally, put $\Lambda_\xi=\cup_{h\in A} \Lambda_{\xi,h}$.
The main result of Mitra in
\cite{M1} says that,  in this case \[L_i=\cup_{\xi\in \partial Q} \Lambda_\xi.\]
For every $\xi\in\partial Q$ the subset $\Lambda_\xi\subseteq \partial^2 G_1$ is an algebraic lamination on $G_1$, and Mitra refers to $\Lambda_\xi$ as the "ending lamination" on $G_1$ corresponding to $\xi$. Moreover, the arguments of Mitra actually imply that if $\xi_1,\xi_2\in\partial Q$ are distinct, then $End(\Lambda_{\xi_1})\cap End(\Lambda_{\xi_2})=\varnothing$. Mitra also notes that for any $\xi\in\partial Q$ there exists a finite subset $B\subseteq A$ such that  $\Lambda_\xi=\cup_{h\in B} \Lambda_{\xi,h}$.

In general, for a short exact sequence $(\ddag)$ and $\xi\in\partial Q$, the ``ending lamination" $\Lambda_\xi\subseteq \partial^2 G_1$ can, at least a priori, be quite large and difficult to understand. This is the case even if $Q=\langle t\rangle\cong \Z$ is infinite cyclic, so that $\partial Q=\{t^{-\infty}, t^{\infty}\}$ consists of just two points.  However, in some situations the laminations $\Lambda_\xi$ are well-understood.

%%%%%%%%%%%%%%%%%
\subsection{Hyperbolic extensions of free groups}
%%%%%%%%%%%%%%%%%

In particular, let $N\ge 3$, let  $\phi\in \Out(F_N)$  be a fully irreducible atoroidal element and let $\Phi\in \Aut(F_N)$ be a representative of the outer automorphism class of $\phi$ (see \cite{BH92,BFH97,BFH00,CH12,HM11,HM13,Ka,LL} for the relevant background). Then 
\[
G=F_N\rtimes_\Phi \Z=\langle F_N, t| t w t^{-1}=\Phi(w), w\in F_N\rangle
\] is word-hyperbolic and we have a short exact sequence $1\to F_N\to G\to \langle t\rangle\to 1$.  Thus, by~\cite{M2}, there does exist a Cannon-Thurston map $i:\partial F_N\to\partial G$.
Using the results of Mitra~\cite{M1} mentioned above as a starting point, Kapovich and Lustig proved in \cite{KL6} that $\Lambda_{t^\infty}=diag(L_{BFH}(\phi))=L(T_-)$ and, similarly,  $\Lambda_{t^{-\infty}}=diag(L_{BFH}(\phi^{-1}))=L(T_+)$.  Here $L_{BFH}(\phi)\subseteq \partial^2 F_N$ is the \lq\lq stable" lamination of $\phi$, introduced by Bestvina, Feighn and Handel in \cite{BFH97}, and $diag(L_{BFH}(\phi^{-1}))$ is the \lq\lq diagonal extension" of $L_{BFH}(\phi)$, that is, the intersection of $\partial^2 F_N$ with the equivalence relation on $\partial F_N$ generated by the relation $L_{BFH}(\phi)\subseteq \partial^2 F_N$ on $\partial F_N$.  Also, here $L(T_-)$  is the "dual algebraic lamination" (in the sense of \cite{CHL,CHL2,KL3}) corresponding to the "repelling"  $\mathbb R$-tree $T_-$ for $\phi$ (the tree $T_-$ is constructed using a train-track representative for $\phi^{-1}$ and the projective class of $T_-$ is the unique repelling fixed point for the right action of $\phi$ on the compactified Outer space.
Thus, in view of the discussion above, we have 
\[
L_i=diag(L_{BFH}(\phi))\cup diag(L_{BFH}(\phi^{-1}))=L(T_-)\cup L(T_+)
\]
in this case. 
The stable lamination $L_{BFH}(\phi)$ of $\phi$ is defined quite explicitly in terms of a train-track representative $f:\Gamma\to\Gamma$ of $\phi$.  Thus a pair $(x,y)\in \partial^2 F_N$ belongs to $L_{BFH}(\phi)$ if and only if for every finite subpath $\tilde v$ of the geodesic from $x$ to $y$ in $\widetilde\Gamma$, the projection $v$ of $\tilde v$ to $\Gamma$ has the property that for some edge $e$ of $\Gamma$ and some $n\ge 1$ the path $v$ is a subpath of $f^n(e)$.  Kapovich and Lustig also proved in \cite{KL5} that $diag(L_{BFH}(\phi))$ is obtained from $L_{BFH}(\phi)$ by adding finitely many $F_N$ orbits of \lq\lq diagonal" leaves $(x,y)$ of a special kind. These extra \lq\lq diagonal leaves" play a similar role to the diagonals of ideal polygons given by complimentary regions for the lift to $\Hy^2$ of the stable geodesic lamination of a pseudo-anosov homeomorphism of a closed hyperbolic surface. 

In \cite{DKT} Dowdall, Kapovich and Taylor generalize the above description of $L_i$ to the case of word-hyperbolic extensions $E_\Gamma$ of $F_N$ determined by purely atoroidal ``convex cocompact" subgroups $\Gamma\le Out(F_N)$. See \cite[Corollary 5.3]{DKT} for details.


\begin{thebibliography}{ABC}


\bibitem{Ag}
I.~Agol, \emph{Tameness of hyperbolic 3-manifolds}, preprint, 2004; arXiv:0405568




\bibitem{ABT}
J. W. Anderson, P. Bonfert-Taylor, and E. Taylor, \emph{Convergence groups, Hausdorff dimension, and a theorem of Sullivan and Tukia.}  Geom. Dedicata, \textbf{103} (2004), 51--67


\bibitem{AHM}
B.\ Aebischer, S. \ Hong and D. McCullough, {\em Recurrent geodesics and controlled concentration points.}
Duke. Math. J., \textbf{75} (1994), no. 3, 759--774.

\bibitem{BR}
O.~Baker and T.~Riley, \emph{Cannon~Thurston maps do not always
  exist.} Forum of Mathematics, Sigma, \textbf{1} (2013), e3 (11 pages)
  
\bibitem{BRi}
O.~Baker and T.~Riley, \emph{Cannon-Thurston maps, subgroup distortion, and hyperbolic hydra},  preprint, 2012; arXiv:1209.0815
  

\bibitem{BeMa}
A. F. Beardon,  and B. Maskit,
\emph{Limit points of Kleinian groups and finite sided fundamental polyhedra.}
Acta Math., \textbf{132} (1974), 1--12

\bibitem{BF92} M.~Bestvina and M.~Feighn, \emph{A combination theorem for negatively curved groups.} J. Differential Geom., \textbf{35} (1992), no. 1, 85--101


\bibitem{BH92} M.~Bestvina, and M.~Handel, \emph{Train tracks and
automorphisms of free groups.} Ann.  of Math.  (2), \textbf{135}
(1992), no.  1, 1--51



\bibitem{BFH97}
M.~Bestvina, M.~Feighn, and M.~Handel, \emph{Laminations, trees, and
irreducible automorphisms of free groups.} Geom.  Funct.  Anal.,
\textbf{7} (1997), no.  2, 215--244

\bibitem{BFH00}
M.~Bestvina, M.~Feighn, and M.~Handel, \emph{The Tits alternative for
${\rm Out}(F\sb n)$.  I. Dynamics of exponentially-growing
automorphisms.} Ann.  of Math.  (2), \textbf{151} (2000), no.  2,
517--623



\bibitem{BF10}
M.~Bestvina, and M.~Feighn, \emph{A hyperbolic $Out(F_n)$--complex.} Groups Geom. Dyn., \textbf{4} (2010), no. 1, 31--58


\bibitem{BF11}
M.~Bestvina, and M.~Feighn, \emph{Hyperbolicity of the complex of free
  factors.}  Adv. Math., \textbf{256} (2014), 104--155

\bibitem{BR12}
M. Bestvina and P. Reynolds, \emph{The boundary of the complex of free factors.} Duke Math. J., \textbf{164} (2015), no. 11, 2213--2251


\bibitem{Bo}
O.~Bogopolski, \emph{Introduction to group theory.} EMS Textbooks in Mathematics. European Mathematical Society (EMS), Z\"urich, 2008

\bibitem{Bon}
F.\ Bonahon, \emph{Bouts des vari\'{e}t\'{e}s hyperboliques de dimension 3.} Ann.\ of Math., {\bf 124}, (1986) 71--158.


\bibitem{Bow98}
B. H. Bowditch, \emph{A topological characterisation of hyperbolic groups.}
J. Amer. Math. Soc., \textbf{11} (1998), no. 3, 643--667

\bibitem{Bow99}
B. H. Bowditch, \emph{Convergence groups and configuration spaces.} in:  "Geometric group theory down under (Canberra, 1996)", 23--54, de Gruyter, Berlin, 1999

\bibitem{Bow07}
B. H. Bowditch, \emph{The Cannon-Thurston map for punctured-surface groups.}  Math. Z., \textbf{255} (2007), no. 1, 35--76

\bibitem{Bow02}
B. H. Bowditch, \emph{Stacks of hyperbolic spaces and ends of 3 manifolds}, in: "Geometry and Topology Down Under'', Contemporary Mathematics Vol. 597, (eds. C.D.Hodgson, W.H.Jaco, M.G.Scharlemann, S.Tillman),  American Mathematical Society (2013), 65--138

\bibitem{BG}
M. R. Bridson, and D. Groves, \emph{The quadratic isoperimetric inequality for mapping tori of free group automorphisms.} Mem. Amer. Math. Soc., \textbf{203} (2010), no. 955

\bibitem{BridHae}
M.~R. Bridson and Andr{\'e} Haefliger. {\em Metric spaces of non-positive curvature}, volume 319 of {\em
  Grundlehren der Mathematischen Wissenschaften [Fundamental Principles of
  Mathematical Sciences]}.  Springer-Verlag, Berlin, 1999.

\bibitem{Br}
P. Brinkmann, \emph{Hyperbolic automorphisms of free groups.} Geom. Funct. Anal., \textbf{10} (2000), no. 5, 1071--1089



\bibitem{ELC2} J. F. Brock, R. D. Canary, and Y. N. Minsky. \newblock \emph{The classification of {K}leinian surface groups, {II}: {T}he ending  lamination conjecture.} \newblock {Ann. of Math. (2)}, \textbf{176} (2012), no.1, 1--149

\bibitem{CG}
D. Calegari and D. Gabai, \emph{Shrinkwrapping and the taming of hyperbolic 3-manifolds.} J. Amer. Math. Soc., \textbf{19} (2006), no. 2, 385--446

\bibitem{CT}
J.~W.~Cannon, and W.~P.~Thurston, 
\emph{Group invariant Peano curves.} 
Geom. Topol., \textbf{11} (2007), 1315--1355

\bibitem{CaJu}
A. Casson and D. Jungreis.
\newblock \emph{Convergence groups and {S}eifert fibered {$3$}-manifolds.}
\newblock {Invent. Math.}, \textbf{118} (1994), no. 3, 441--456

\bibitem{CB}
A.~J. Casson and S.~A. Bleiler.
\newblock {\em Automorphisms of surfaces after {N}ielsen and {T}hurston.}
  volume~9 of {\em London Mathematical Society Student Texts}.
\newblock Cambridge University Press, Cambridge, 1988.



\bibitem{CH12}
 T.~Coulbois, A.~Hilion, \emph{Botany of irreducible automorphisms of free groups.}  Pacific J. Math.,  \textbf{256} (2012),  no. 2, 291--307
 
 \bibitem{CH14}
 T.~Coulbois, A.~Hilion, 
 \emph{Rips induction: index of the dual lamination of an $\mathbb R$-tree.} Groups Geom. Dyn., \textbf{8} (2014), no. 1, 97--134
 

\bibitem{CHL}
 T.~Coulbois, A.~Hilion and M. Lustig,
 \emph{Non-unique ergodicity, observers' topology and the dual algebraic lamination for $\mathbb R$-trees.}
Illinois J. Math., \textbf{51} (2007), no. 3, 897--911

\bibitem{CHL1}
 T.~Coulbois, A.~Hilion and M. Lustig,
 \emph{$\mathbb R$-trees and
 laminations for free groups I: Algebraic laminations.}  J. Lond. Math. Soc. (2),  \textbf{78}  (2008),  no. 3, 723--736

\bibitem{CHL2}
T.~Coulbois, A.~Hilion and M. Lustig,
\emph{ $\mathbb R$-trees and
 laminations for free groups II: The dual lamination of an $\mathbb
 R$-tree.} J. Lond. Math. Soc. (2),  \textbf{78}  (2008),  no. 3, 737--754


\bibitem{DKT}
S. Dowdall, I. Kapovich and S. J. Taylor, \emph{Cannon-Thurston maps for hyperbolic free group extensions.} Israel J. Math., to appear; arXiv:1506.06974


\bibitem{Fen}
S. Fenley.
\newblock \emph{Ideal boundaries of pseudo-{A}nosov flows and uniform convergence
  groups with connections and applications to large scale geometry.}
\newblock {Geom. Topol.}, \textbf{16} (2012), no. 1, 1--110

\bibitem{F80}
W.~J. Floyd.
\newblock \emph{Group completions and limit sets of {K}leinian groups.}
\newblock {Invent. Math.}, \textbf{57} (1980), no. 3, 205--218

\bibitem{Fre}
E. Freden, \emph{Negatively curved groups have the convergence property. I.} Ann. Acad. Sci. Fenn. Ser. A I Math., \textbf{20} (1995), no. 2, 333--348

\bibitem{Gab92}
D. Gabai.
\newblock \emph{Convergence groups are {F}uchsian groups.}
\newblock {Ann. of Math. (2)}, \textbf{136} (1992), no. 3, 447--510


\bibitem{Ger}
V. Gerasimov, \emph{Expansive convergence groups are relatively hyperbolic.} Geom. Funct. Anal., \textbf{19} (2009), no. 1, 137--169

\bibitem{Ger12}
V. Gerasimov, \emph{Floyd maps for relatively hyperbolic groups.} Geom. Funct. Anal., \textbf{22} (2012), no. 5,1361--1399


\bibitem{GP09}
V. Gerasimov and L. Potyagailo, 
\emph{Quasi-isometric maps and Floyd boundaries of relatively hyperbolic groups.} J. Europ. Math. Soc. (JEMS), \textbf{15} (2013), no. 6, 2115--2137


\bibitem{GP13}
V. Gerasimov and L. Potyagailo, 
\emph{Similar relatively hyperbolic actions of a group}, preprint, 2013; arXiv:1305.6649


\bibitem{GeMa}
F. W. Gehring, and G. J. Martin, 
\emph{Discrete quasiconformal groups. I.} 
Proc. London Math. Soc. (3), \textbf{55} (1987), no. 2, 331--358








\bibitem{HM11}
M.~Handel and L.~Mosher, \emph{Lee Axes in outer space.} Mem. Amer. Math. Soc.,  \textbf{213} (2011), no. 1004



\bibitem{HM13}
M. Handel and L.~Mosher,
\emph{Subgroup decomposition in $Out(F_n)$: Introduction and Research
Announcement}, preprint, 2013; arXiv:1302.2681


\bibitem{JKOL}
W.\ Jeon, I.\ Kim and K.\ Ohshika with C.\ Lecuire,
\emph{Primitive stable representations of free Kleinian groups.} Israel J.\ Math., \textbf{199} (2014), no. 2, 841--866

\bibitem{JO}
W.\ Jeon, K. \ Ohshika, {\em Measurable rigidity for Kleinian groups.} Ergodic theory and Dynamical systems, to appear; published online June 2015; DOI: 10.1017/etds.2015.15


\bibitem{Ka99}
I. Kapovich, \emph{A non-quasiconvexity embedding theorem for hyperbolic groups.}   Math Proc. Cambridge Phil. Soc. \textbf{127} (1999), no. 3,  pp. 461--486

\bibitem{Ka}
I. Kapovich, \emph{Algorithmic detectability of iwip automorphisms.}
Bulletin of London Math. Soc.,  \textbf{46} (2014), no. 2, 279--290.



\bibitem{KB}
I. Kapovich, and N. Benakli, \emph{Boundaries of hyperbolic groups,} in:  "Combinatorial and geometric group theory (New York, 2000/Hoboken, NJ, 2001)", 39--93, Contemp. Math., 296, Amer. Math. Soc., 2002


\bibitem{KL2}
I.~Kapovich and M.~Lustig, \emph{Geometric Intersection Number and analogues of the Curve Complex for free
groups.} Geometry \& Topology, \textbf{13} (2009),  1805--1833


\bibitem{KL3}
I.~Kapovich and M.~Lustig, \emph{Intersection form, laminations and
currents on free groups.} Geom. Funct. Anal. (GAFA), \textbf{19} (2010), no. 5, pp. 1426--1467


\bibitem{KL5}
I. Kapovich and M. Lustig, \emph{Invariant laminations for irreducible
  automorphisms of free groups.}  Quarterly J. Math., \textbf{65} (2014), no. 4, 1241--1275


\bibitem{KL6}
I. Kapovich and M. Lustig,\emph{Cannon-Thurston fibers for iwip automorphisms of $F_N$.}  J. Lond. Math. Soc., \textbf{91} (2015), no. 1, 203--224

\bibitem{KM}
I.~Kapovich and A.~Myasnikov, \emph{Stallings foldings and the
subgroup structure of free groups.} J. Algebra, \textbf{248} (2002), no
2, 608--668



\bibitem{KS96}
I.~Kapovich, and H.~Short, \emph{Greenberg's theorem for quasiconvex subgroups of word hyperbolic groups.}  Canad. J. Math.,  \textbf{48}  (1996),  no. 6, 1224--1244

\bibitem{K95}
M. Kapovich, \emph{On the absence of Sullivan's cusp finiteness theorem in higher dimensions}, in: "Algebra and analysis (Irkutsk, 1989)", (L A Bokut', M Hazewinkel, Y G Reshetnyak, editors), Amer. Math. Soc. Transl. Ser. 2 163, Amer. Math. Soc. (1995) 


\bibitem{KK00}
M.~Kapovich, and B.~Kleiner, \emph{Hyperbolic groups with low-dimensional boundary.} Ann. Sci. Ecole Norm. Sup. (4) \textbf{33} (2000), no. 5, 647--669


\bibitem{KeLe1}
R. Kent, and C. J. Leininger,  \emph{Shadows of mapping class groups: capturing convex cocompactness.} Geom. Funct. Anal., \textbf{18} (2008), no. 4, 1270--1325

\bibitem{KeLe2}
R. Kent, and C. J. Leininger,  \emph{Uniform convergence in the mapping class group.} Ergodic Theory Dynam. Systems, \textbf{28} (2008), no. 4, 1177--1195

\bibitem{Kl99}
E. Klarreich,
\newblock \emph{Semiconjugacies between {K}leinian group actions on the {R}iemann
  sphere.}
\newblock {\em Amer. J. Math.}, \textbf{121} (1999), no. 5, 1031--1078


\bibitem{LLR}
C. Leininger, D. Long, and A. W. Reid, \emph{Commensurators of finitely generated nonfree Kleinian groups.}  Algebr. Geom. Topol.,  \textbf{11} (2011), no. 1, 605--624

\bibitem{LMS}
C. J. Leininger, M.  Mj,  and S. Schleimer, \emph{The universal Cannon-Thurston map and the boundary of the curve complex.} Comment. Math. Helv., \textbf{86} (2011), no. 4, 769--816.

\bibitem{LL}
G.~Levitt and M.~Lustig, \emph{Irreducible automorphisms of $F_n$ have
North-South dynamics on compactified outer space.} J. Inst.  Math.
Jussieu, \textbf{2} (2003), no.  1, 59--72


\bibitem{Mc01}
C.~T. McMullen.
\newblock \emph{Local connectivity, {K}leinian groups and geodesics on the blowup of the torus.}
\newblock {Invent. Math.}, \textbf{146} (2001), no. 1, 35--91

\bibitem{Min94}
Y. Minsky, 
\emph{On rigidity, limit sets, and end invariants of hyperbolic 3-manifolds.}
J. Amer. Math. Soc., \textbf{7} (1994), no. 3, 539--588

\bibitem{ELC1} Y. Minsky. \newblock {\em The classification of {K}leinian surface groups. {I}. {M}odels and  bounds.} \newblock {Ann. of Math. (2)}, \textbf{171} (2010), no. 1, 1--107

\bibitem{M1}
M.~Mitra, \emph{Ending laminations for hyperbolic group extensions.}  Geom. Funct. Anal.,  \textbf{7}  (1997),  no. 2, 379--402

\bibitem{M2}
M.~Mitra, \emph{Cannon-Thurston maps for hyperbolic group extensions.}
Topology,  \textbf{37}  (1998),  no. 3, 527--538

\bibitem{M4}
M.~Mitra, \emph{Cannon-Thurston maps for trees of hyperbolic metric spaces.}  J. Differential Geom.,  \textbf{48}  (1998),  no. 1, 135--164.

\bibitem{Miy}
H~Miyachi.
\newblock \emph{Semiconjugacies between actions of topologically tame {K}leinian
  groups.}
\newblock {2002, preprint}.



\bibitem{M07}
M. Mj, \emph{Ending laminations and Cannon-Thurston maps. With an appendix by Shubhabrata Das and Mj.} Geom. Funct. Anal., \textbf{24} (2014), no. 1, 297--321

\bibitem{M09}
M. Mj, \emph{Cannon-Thurston maps for pared manifolds of bounded geometry.} Geom. Topol., \textbf{13} (2009), no. 1, 189--245

\bibitem{M0910}
M. Mj.
\newblock \emph{Cannon-{T}hurston maps, i-bounded geometry and a theorem of
  {M}c{M}ullen.}
\newblock In {\em Actes du {S}\'eminaire de {T}h\'eorie {S}pectrale et
  {G}\'eometrie. {V}olume 28. {A}nn\'ee 2009--2010}, volume~28 of {\em S\'emin.
  Th\'eor. Spectr. G\'eom.}, pages 63--107. Univ. Grenoble I, Saint, 2010.

\bibitem{M10}
M. Mj, \emph{Cannon-Thurston maps and bounded geometry.} in: ``Teichm{\"u}ller theory and moduli problems'', 489--511, Ramanujan Math. Soc. Lect. Notes Ser., 10, Ramanujan Math. Soc., Mysore, 2010


\bibitem{M10b}
M. Mj, \emph{Cannon-Thurston Maps for Kleinian Groups},   arXiv:1002.0996

\bibitem{M11}
M. Mj, \emph{On discreteness of commensurators.} Geom. Topol., \textbf{15} (2011), no. 1, 331--350

\bibitem{M14}
M. Mj,
\newblock \emph{Cannon--{T}hurston maps for surface groups.} Ann. of Math. (2), \textbf{1979} (2014), no. 1, 1--80


\bibitem{MP11}
M. Mj, and A. Pal, \emph{Relative hyperbolicity, trees of spaces and Cannon-Thurston maps.} Geom. Dedicata, \textbf{151} (2011), 59--78



\bibitem{Q}
M.~Queff{\'e}lec.
\newblock {\em Substitution dynamical systems---spectral analysis}, volume 1294
  of {\em Lecture Notes in Mathematics}.
\newblock Springer-Verlag, Berlin, second edition, 2010



\bibitem{Rivin}
I.~Rivin,
\emph{Zariski density and genericity.}
Int. Math. Res. Not. IMRN, (2010), no. 19, 3649--3657


\bibitem{Sou}
J. Souto,
\newblock {\em Cannon-{T}hurston maps for thick free groups.}
\newblock 2006, preprint,\\ 
  http://www.math.ubc.ca/$\sim$jsouto/papers/Cannon-Thurston.pdf.


\bibitem{Sul82}
D. \ Sullivan, {\em Discrete conformal groups and measurable
  dynamics.} Bull. Amer. Math. Soc., \textbf{6} (1982), 57--73.


\bibitem{Swe}
E. Swenson,  \emph{Quasi-convex groups of isometries of negatively curved spaces.} Topology Appl., \textbf{110} (2001), no. 1, 119--129

\bibitem{Tu89}
P.\ Tukia, {\em A rigidity theorem for Mobius groups.} Invent. Math., \textbf{97} (1989), 405--431


\bibitem{Tu94}
P. Tukia, \emph{Convergence groups and Gromov's metric hyperbolic spaces.} New Zealand J. Math., \textbf{23} (1994), no. 2, 157--187

\bibitem{Tu98}
P. Tukia, 
\emph{Conical limit points and uniform convergence groups.}
J. Reine Angew. Math., \textbf{501} (1998), 71--98


\bibitem{Th}
W.~P. Thurston.
\newblock {\em {The geometry and topology of three-manifolds}}.
\newblock Lecture Notes from Princeton University, {1978--1980}.


\bibitem{Y}
A. Yaman, \emph{A topological characterisation of relatively hyperbolic groups.} J. Reine Angew. Math., \textbf{566} (2004), 41--89

\end{thebibliography}
\end{document}